\newcommand{\N}{\mathbb{N}}
\newcommand{\RR}{\mathbb{R}}                                  % reelle
\newcommand{\WW}{\mathcal{W}}                                  % reelle
\newcommand{\UU}{\mathcal{U}}
\newcommand{\MM}{\mathcal{M}}
\newcommand{\bd}{\mathrm{bd}\,}
\newcommand{\intt}{\mathrm {int}\,}
\newcommand{\diag}{\mathrm{diag}\,}
\newcommand{\conv}{\mathrm{conv}\,}
\newcommand{\midset}{\;|\;}
\def\Expect{\mathop{\bf E{}}}
\def\Prob{{ \rm I \!\! P}}
\def\dsp{\displaystyle}
\newcommand{\qed}{$\Box$}
\newtheorem{definition}{Definition}[section]
\newtheorem{theorem}{Theorem}[section]
\newtheorem{proposition}{Proposition}[section]
\newtheorem{lemma}{Lemma}[section]
\newtheorem{corollary}{Corollary}[section]
\newtheorem{remark}{Remark}[section]
\newenvironment{proof}[1][Proof]{\begin{trivlist}
\item[\hskip \labelsep {\bfseries #1}]}
{\end{trivlist}}
\title{Stability Criteria for SIS Epidemiological Models under Switching Policies}
\author{%
Mustapha Ait Rami\thanks{Dept. Ingenieria de Sistemas y Automatica, Universidad de Valladolid, Valladolid, Spain ({\tt aitrami@autom.uva.es})}
\and
Vahid S. Bokharaie\thanks{Departement Werktuigkunde, KU Leuven, Leuven, Belgium 
({\tt vahid.bokharaie@kuleuven.be})}
\and
Oliver Mason\thanks{Hamilton Institute, National University of Ireland Maynooth,Maynooth, Ireland ({\tt oliver.mason@nuim.ie})}
\and
Fabian R. Wirth\thanks{IBM Research Ireland, Damastown Industrial Estate, Mulhuddart, Dublin 15, Ireland ({\tt fabwirth@ie.ibm.com})}
}
\begin{document}

\maketitle

\begin{abstract}      
    We study the spread of disease in an SIS model for a structured
    population. The model considered is a time-varying, switched model, in
    which the parameters of the SIS model are subject to abrupt change.
    We show that the joint spectral radius can be used as a threshold
    parameter for this model in the spirit of the basic reproduction
    number for time-invariant models.  We also present conditions for
    persistence and the existence of periodic orbits for the switched
    model and results for a stochastic switched model.
\end{abstract}

%\begin{keywords}
{\bf Keywords.}
Mathematical Epidemiology, SIS Model, Compartmental Model, Switched
system, Disease Propagation, Endemic Equilibrium, Positive System,
Extremal Norm, Lyapunov Exponents.
%\end{keywords}

%\begin{AMS}
{\bf AMS subject classifications.}
92D30,34D23,37B25
%\end{AMS}

\pagestyle{myheadings}
\thispagestyle{plain}
\markboth{M. AIT RAMI, V. BOKHARAIE, O. MASON,  AND, F. WIRTH}{STABILITY FOR SIS EPIDEMIOLOGICAL MODELS}

\parindent0pt
%%%%%%%%%%%%%%%%%%%%%%%%%%%%%%%%%%%%%%%%%%%%%%%%%%%%%%%%%%%%%%%%%%
%%%%%%%%%%%%%%%%%%%%%%%%%%%%%%%%%%%%%%%%%%%%%%%%%%%%%%%%%%%%%%%%%%
\section{Introduction} \label{sec:intro} In this paper a number of
stability results are derived for switched compartmental epidemiological
models of SIS (susceptible-infectious-susceptible) type. Such models are
related to structured populations to which an infection graph is
associated. The switched system aspect models changes is
parameters, e.g. in infection or recovery rates. We derive uniform
stability results for the disease free equilibrium, as well as instability
results, study the existence of nontrivial periodic solutions and present
some results on Markovian switching.  Mathematically, the results rely on
a combination of the theory of positive and monotone systems, Lyapunov
theory for switched systems, tools from degree theory and Markov systems.

{Mathematical epidemiology is concerned with the construction and analysis of mathematical models for disease propagation in a network of individuals; similar models can also be applied to study the spread of computer viruses.   The role of mathematical modelling is particularly important in epidemiology as experimentation in this field is both impractical and unethical.  As mentioned in \cite{Bai57}: ``we need to develop models that will assist the decision-making process by helping to evaluate the consequences of choosing one of the alternative strategies available. Thus, mathematical models of the dynamics of a communicable disease can have a direct bearing on the choice of an immunisation programme, the optimal allocation of scarce resources, or the best combination of control or education technologies''.  To further underline the importance of the topic, it should be noted that in spite of the advances in vaccination and prevention of disease transmission in the past few decades, infectious and parasitic diseases are the second leading cause of death worldwide (after cardiovascular diseases) \cite[Figure 4]{WHO08}. They are the leading cause of death in low-income countries \cite[Table 2]{WHO08} and in children aged under five years \cite[Figure 5]{WHO08}.

Epidemiological models based on ordinary or partial differential equations typically divide the population into different epidemiological classes or compartments.  For instance, such compartments may represent:  \emph{susceptible} (S) individuals who are healthy but not immune to the disease; \emph{infective} individuals (I), who are already infected and can transfer the disease to a susceptible; or \emph{recovered} individuals (R) who have immunity to the disease.  Some other more complex models also include compartments for exposed people who are infected but not yet infective and for infants with temporary inherited immunity.  A model is typically represented by the initials of the epidemiological classes it incorporates.

In this manuscript, we are concerned with SIS models, in which all individual are considered to be either susceptible or infective.  When susceptibles are in sufficient contact with infectives, they become infectives themselves. When infectives are restored to health, they re-join the susceptible compartment.
SIS models have been used to model diseases that do not confer immunity on the survivors. Tuberculosis and gonorrhoea are two example diseases which are mathematically described using SIS models \cite{CY73,HY84,New03}. Computer viruses also fall into this category;  they can be `cured' by antivirus software, but without  a permanent  virus-checking  program, the computer is not protected against the subsequent attacks by the  same virus.

Our work in this paper seeks to build on the results presented in \cite{FIST07}, where an SIS model, with constant parameters, that can describe heterogeneous populations was considered.  The authors of \cite{FIST07} presented conditions for the stability of the disease-free equilibrium and for the existence and stability of a unique endemic equilibrium.  These conditions were described in terms of the spectral radius of a matrix naturally associated with the system and are in the spirit of the more general model class studied in \cite{DW02}.  It has been recognized for some time that time-varying parameters play an important role in the dynamics of disease propagation; in particular, several authors have considered the impact of seasonal effects by analysing periodic SIS models \cite{PerSeasonal2012, PerPatchy2011}.

A major concern in the current paper is to study a switched version of the model in \cite{FIST07}.  This allows us to consider the effect of sudden changes in the parameters of the model.  Such changes can arise for a variety of reasons.  For instance, public health authorities may implement a rapid vaccination programme or close schools or public transport systems.  Such policies generate abrupt changes in key model parameters.  It is worth pointing out that with modern communication systems it is possible for large groups of individuals to collectively alter their behaviour rapidly through online rumour spreading for instance.  The sudden variation in model parameters that this would give rise to need not be periodic and provides some practical motivation for considering a switched model.

We show that the results of \cite{FIST07} may be generalized in a far
reaching manner. Our first main result is that for switched systems
uniform stability of the linearization implies uniform global asymptotic
stability. The result may be formulated using the assumption of
irreducibility, in which case they become direct extensions of
\cite{FIST07}.  In addition, we present conditions for the existence of
instability and the existence of endemic periodic solutions even if the
constituent systems of the switched system are all stable. In this context
we will several times make use of the theory of monotone systems studied
in \cite{Smi95,Chue02}. One particular contribution of this paper is to
analyse the stability of the epidemic model in \cite{FIST07} when its
parameters are subject to random Markovian switching. General dynamical
systems with random Markovian switching were introduced and studied in
\cite{KaK:60} and \cite{KrL:61}. In the literature, such systems are
frequently called {\em jump systems} and are connected to a wide range of
applications; for more details we refer to~\cite{Mar:90}.  For the problem
of uniform global asymptotic stability it is shown in \cite{Proell13} that
the techniques of the present paper may be used to also treat SIR and SIRS
models.

The layout of the paper is as follows: in Section \ref{sec:bkground}, we
present the preliminary results and basic properties which are used in the
remainder of this manuscript.  The basic SIS model of \cite{FIST07} and
the switched system framework we study in this manuscript are described in
Section \ref{sec:problem}.  We describe results on Positive Switched
Linear Systems and extremal norms in Section~\ref{sec:PosLinSys}.
Building on this, results characterizing global uniform asymptotic
stability of the disease-free equilibrium of the switched SIS model in
terms of the joint spectral radius are given in Section \ref{sec:DFE}.  In
Section \ref{sec:Per}, we consider the case where the Disease Free
Equilibrium (DFE) is a globally asymptotically stable equilibrium of all
the subsystems and introduce a switching signal that gives rise to endemic
behaviour.  Section \ref{sec:markov} examines the stability of the DFE of
the switched SIS model when the switching signal is a Markov process. This
case has already been studied in \cite{BacaKhal12,GrayGree12}, where the
interpretation of the basic reproduction number is studied for the case of
populations with age structure or with no structure. The case where each
subsystem of the switched SIS model has an endemic equilibrium is
considered in Section \ref{sec:Stabilisation} and conditions for the
existence of a switching signal for which the DFE is a globally
asymptotically stable equilibrium of the switched SIS model are
described. In Section \ref{sec:conclusions}, we present our conclusions.
}
%%%%%%%%%%%%%%%%%%%%%%%%%%%%%%%%%%%%%%%%%%%%%%%%%%%%%%%%%%%%%%%%%%
%%%%%%%%%%%%%%%%%%%%%%%%%%%%%%%%%%%%%%%%%%%%%%%%%%%%%%%%%%%%%%%%%%
\section{Preliminaries}\label{sec:bkground}
{
{
%\RR
  Throughout the paper, $\RR$ and $\RR^n$ denote the field of real numbers
  and the vector space of $n$ dimensional column vectors with real
  entries, respectively.
%x_i, a_{ij}
  For $x \in \RR^n$ and $i = 1, \ldots , n$, $x_i$ denotes the $i$th
  coordinate of $x$. Similarly, $\RR^{n \times n}$ denotes the space of $n
  \times n$ matrices with real entries and for $A \in \RR^{n \times n}$,
  $a_{ij}$ denotes the $(i, j)$th entry of $A$.  The positive orthant
  in $\RR^n$ is $\RR^n_+ := \{x \in \RR^n: x_i \geq 0, 1 \leq i \leq n\}.$
  The interior of $\RR^n_+$ is denoted by $\intt(\RR^n_+)$ and its
  boundary by $\bd(\RR^n_+):= \RR^n_+ \setminus \intt(\RR^n_+)$.
%x<y
  For vectors $x, y \in \RR^n$, we write: $x \geq y$ if $x_i \geq y_i$ for
  $1 \leq i \leq n$; $x > y$ if $x \geq y$ and $x \neq y$; $x \gg y$ if
  $x_i > y_i, 1 \leq i \leq n$. The absolute value $|x|$ of a vector $x\in
  \RR^n$ is defined by $|x|_i := |x_i|, i=1,\ldots,n$.

%Irreducible matrix
A matrix $A\in \RR^{n \times n}_+ $ is called \emph{irreducible} if for
every nonempty proper subset $K$ of $N:=\{1, \cdots, n\}$, there exists an
$i\in K$, $j \in N \setminus K$ such that $a_{ij}\neq 0$. When $A$ is not
irreducible, it is \emph{reducible}.
%diag(x)
Also, for $x\in \RR^n$, $\diag(x)$ is the $n \times n$
diagonal matrix in which $d_{ii}=x_i$.  \\
%Spectrum, Spectral Abscissa
For $A \in \RR^{n \times n}$, we denote the \textit{spectrum} of $A$
by $\sigma(A)$ and the spectral radius of $A$ by $\rho(A)$.  The
notation $\mu(A)$ denotes the \textit{spectral abscissa} of $A$
which is defined as follows:
\begin{equation}\nonumber
\mu(A) := \max \{ \mathrm{Re} (\lambda) : \lambda \in \sigma(A) \}.
\end{equation}
A matrix $A\in \RR^{n \times n}$ is called {\em Hurwitz}, if $\mu(A)<0$.
%Spectral radius

%absolute, monotone norms
  It will be useful to study norms, which are adapted to the nonnegative
  setting.  A norm on $\RR^n$ is called {\em monotone} if $|x|\geq |y|$
  implies $\|x\|\geq \|y\|$. This is equivalent to the requirement that
  $\|x\| = \| \, |x|\, \|$ for all $x\in \RR^n$, see
  \cite[Theorem~2]{bauer1961absolute}, \cite[Theorem~5.5.10]{HornJohn}.
  Norms with the latter property are called {\em absolute} and we will use
  this name throughout the remainder of the paper.

  The dual norm $\|\cdot\|^*$ of a norm $\|\cdot\|$ on $\RR^n$ is defined by
  \begin{equation}
      \label{eq:defdualnorm}
      \|y\|^* := \max \{ \langle x,y \rangle \midset \|x\|\leq 1 \}\,,\quad y \in \RR^n \,.
  \end{equation}
  It is known that a norm is absolute if and only if its dual norm is,
  \cite[Theorem~1]{bauer1961absolute}. Given $\|\cdot\|$ and its dual
  $\|\cdot\|^*$, a pair of vectors $(x,y)\in \RR^n\times \RR^n $ is called
  a {\em dual pair}, if
  \begin{equation}
      \label{eq:dualpair}
      \langle x,y \rangle = \|x\|\, \|y\|^*\,.
  \end{equation}
  A vector $y$ is called {\em dual} to $x\in \RR^n$, if $(x,y)$ is a dual
  pair. Note that the order is important: in a dual pair the first vector
  is evaluated with $\|\cdot\|$ and the second with its dual. The relation
  is not symmetric, in general.

 The following observation on the
  properties of dual vectors of absolute norms will be useful.
  \begin{lemma}
      \label{lem:dualabs}
      Let $\| \cdot\|$ be an absolute norm on $\RR^n$ and $x\in \RR^n_+$. If
      $y$ is dual to $x$ and $x_i>0$, then $y_i \geq 0$. In particular, if
      $x,y\neq 0$ then $x_jy_j >0$ for some index $j$.
  \end{lemma}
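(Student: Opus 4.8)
The plan is to establish the first assertion by contradiction, exploiting the characteristic property of an absolute norm that the sign of a single coordinate may be flipped without changing the norm. Fix an index $i$ with $x_i > 0$ and suppose, towards a contradiction, that $y_i < 0$. Let $x' \in \RR^n$ be obtained from $x$ by replacing the $i$th entry $x_i$ by $-x_i$ and leaving every other entry unchanged; then $|x'| = |x|$ coordinatewise, so since $\|\cdot\|$ is absolute we have $\|x'\| = \|\,|x'|\,\| = \|\,|x|\,\| = \|x\|$.

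Next I would combine the dual pair relation \eqref{eq:dualpair} with the defining bound for the dual norm. On the one hand, expanding the inner product gives $\langle x', y\rangle = \langle x, y\rangle - 2 x_i y_i = \|x\|\,\|y\|^* - 2 x_i y_i > \|x\|\,\|y\|^* = \|x'\|\,\|y\|^*$, where the strict inequality uses $x_i > 0$ and $y_i < 0$. On the other hand, \eqref{eq:defdualnorm} yields $\langle z, y\rangle \le \|z\|\,\|y\|^*$ for every $z \in \RR^n$ (trivial for $z = 0$, and otherwise obtained by applying \eqref{eq:defdualnorm} to $z/\|z\|$); taking $z = x'$ contradicts the previous display. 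Hence $y_i \ge 0$, which is the first claim. An alternative, contradiction-free route to the same conclusion is to note that $x$ maximizes $z \mapsto \langle z, y\rangle$ over the set $\{z : |z_k| = x_k \text{ for all } k\}$, because each such $z$ satisfies $\|z\| = \|x\|$; comparing this maximum with its elementary value $\sum_k x_k |y_k|$ forces $x_k(|y_k| - y_k) = 0$ for all $k$, hence $y_k \ge 0$ whenever $x_k > 0$.

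For the final statement, assume $x \neq 0$ and $y \neq 0$. Since $x \in \RR^n_+ \setminus \{0\}$ we have $\|x\| > 0$, and since $y \neq 0$ we have $\|y\|^* > 0$; therefore $\langle x, y\rangle = \|x\|\,\|y\|^* > 0$ by \eqref{eq:dualpair}. As $\langle x, y\rangle = \sum_{k=1}^n x_k y_k$, a strictly positive sum of real numbers must contain a strictly positive summand, so $x_j y_j > 0$ for some index $j$. (The first part of the lemma shows moreover that any such $j$ has $x_j > 0$ and $y_j > 0$, but this refinement is not needed.)

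I do not anticipate a genuine obstacle; the argument is short. The only two points requiring a little care are checking that a single sign flip really leaves $|x|$ unchanged, so that absoluteness applies, and invoking the inequality $\langle z, y\rangle \le \|z\|\,\|y\|^*$ in the correct direction — this is simply the defining bound for the dual norm and is valid for an arbitrary norm, absolute or not.
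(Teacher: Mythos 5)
Your proof is correct, but it takes a genuinely different route from the paper's. The paper argues in one chain: $\|x\|\,\|y\|^* = \langle x,y\rangle = \sum_{x_i>0} x_iy_i \leq \sum_{x_i>0} x_i|y_i| = \langle x,|y|\rangle \leq \|x\|\,\|\,|y|\,\|^*$, and then closes the chain using the fact that the \emph{dual} of an absolute norm is again absolute (the cited Theorem~1 of Bauer--Stoer--Witzgall), so that $\|y\|^* = \|\,|y|\,\|^*$ forces equality throughout and hence nonnegativity of every summand $x_iy_i$ with $x_i>0$. You instead work entirely with absoluteness of the \emph{primal} norm: flipping the sign of the $i$th coordinate of $x$ leaves $\|x\|$ unchanged, and if $x_i>0$, $y_i<0$ this would push $\langle x',y\rangle$ strictly above $\|x'\|\,\|y\|^*$, contradicting the defining bound \eqref{eq:defdualnorm}; your alternative ``maximize $\langle z,y\rangle$ over all sign patterns of $x$'' argument is the same idea phrased without contradiction and is closest in spirit to the paper's comparison with $\sum_k x_k|y_k|$. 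What your version buys is independence from the duality result for absolute norms (you only need the elementary inequality $\langle z,y\rangle\leq\|z\|\,\|y\|^*$); what the paper's version buys is a single two-line inequality chain whose equality case delivers all indices at once and feeds directly into the second claim. Your treatment of the second claim (positivity of $\langle x,y\rangle=\|x\|\,\|y\|^*>0$ forces a strictly positive summand) coincides with the paper's.
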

  \begin{proof}
      By \eqref{eq:dualpair} and the assumption we have
      \begin{equation*}
        \|x\| \, \|y\|^* = \langle x,y \rangle = \sum_{x_i>0} x_i y_i \leq
        \sum_{x_i>0} x_i |y_i| =\langle x,|y| \rangle \leq \|x\| \, \|\,|y|\,\|^*  \,.
      \end{equation*}
      As $\|\cdot\|^*$ is absolute, we have $\|y\|^*=\|\,|y|\,\|^*$ and so
      equality throughout. This implies that all summands in $\sum_{x_i>0}
      x_i y_i$ are nonnegative, which is the first assertion. The second
      claim follows as by assumption $\|x\|\|y\|^* > 0$ and so some of the
      terms in the sum $\sum_{x_i>0} x_i y_i$ have to be positive.
\hfill~\qed
  \end{proof}

  It follows in particular, that if $\| \cdot\|$ is an absolute norm, and
  $x\gg 0$, then any dual vector $y$ to $x$ satisfies $y\geq 0$.  A
  particular absolute norm is the usual infinity norm denoted by
  $\|\cdot\|_{\infty}$.

{\em Monotone Systems}

Throughout the paper, $\mathcal{W}$ is a neighbourhood of
$\mathbb{R}^n_+$.  Let $f: \mathcal{W} \rightarrow \RR^n$ be a $C^1$
vector field.  The \emph{forward solution} of the nonlinear system
\begin{equation}\label{eq:sys1}
\dot{x}(t) = f(x(t)), \quad\; x(0) = x^0.
\end{equation}
with initial condition $x^0\in \WW$ at $t=0$ is denoted by $x(t,x^0)$ and
is defined on the maximal forward interval of existence
$\mathcal{I}_{x^0}:=[0, T_{max}(x^0))$.  All autonomous nonlinear systems
considered here have unique solutions defined on $[0, \infty)$.

%Positivity
The systems considered here are \emph{positive systems}. A system is
called positive if $x^0 \geq 0$ implies $x(t, x^0) \geq 0$ for all $t \geq
0$.  It is well known \cite{Del00} that the following property is necessary
and sufficient for (\ref{eq:sys1}) to be positive
\begin{equation}\label{eq:positivity}
\forall x \in \bd (\RR^n_+):x_i=0 \Rightarrow f_i(x)\geq 0.
\end{equation}
In particular, a linear system $\dot{x}=Ax$ is positive if and only if $A$
has nonnegative off-diagonal entries, i.e.\ $a_{ij}\geq0$, $\forall i\neq
j$.  Matrices with this property are called {\em Metzler}, or sometimes negative
$Z$-matrices or essentially nonnegative matrices. In this paper we stick to the former terminology.

%Cooperative vector field
As is standard, we say that the $C^1$ vector field $f:\mathcal{W}
\rightarrow \RR^n$ is \textit{cooperative} on $\UU \subseteq \WW$ if the
Jacobian matrix $\frac{\partial f}{\partial x}(a)$ is Metzler for all
$a\in \UU$.  When we say that $f$ is cooperative without specifying the
set $\UU$, we understand that it is cooperative on $\RR^n_+$.  It is well
known that cooperative systems are monotone \cite{Smi95}.  I.e., if $f:\WW \rightarrow \RR^n$ is cooperative on $\RR^n_+$ then
$x^0 \leq y^0$, $x^0, y^0 \in \RR^n_+$ implies $x(t, x^0) \leq x(t, y^0)$
for all $t \geq 0$.

%Proposition 3.2.1 in Smith
We will use the following lemma, which is an immediate consequence of
\cite[Prop.~3.2.1]{Smi95}.
\begin{lemma}\label{lem:nonincreas}
    Let $f:\WW \rightarrow \RR^n$ be cooperative and $w\in \WW $ be such
    that $f(w)\ll 0$ ($f(w)\gg 0$). Then the trajectory $x(\cdot,w)$ of system
    (\ref{eq:sys1}) is decreasing (increasing) in $t$ for $t\geq 0$ with
    respect to the order on $\RR^n_+$, i.e. if $f(w)\ll 0$, then
    \begin{equation*}
        x(t,w) \ll x(s,w) \,,\quad \text{for all}\quad 0\leq s < t < T_{{\rm max}}(w)\,.
    \end{equation*}
    If $f(w)\leq 0$ ($f(w)\geq 0$), the trajectory will be
    non-increasing (non-decreasing).
\end{lemma}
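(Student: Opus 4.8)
The plan is to reduce the statement to two ingredients already in hand: the monotonicity of the flow of a cooperative system, recalled above, and the trivial remark that a time shift of a trajectory of the autonomous system (\ref{eq:sys1}) is again a trajectory. Strictness of the inequalities I would then extract from the variational equation along $x(\cdot,w)$. The assertion is in fact a direct instance of \cite[Prop.~3.2.1]{Smi95}, which one could simply cite; what follows is a self-contained sketch. I would write out the case $f(w)\ll 0$ only, since $f(w)\gg 0$ follows by reversing all inequalities and the non-strict cases $f(w)\leq 0$, $f(w)\geq 0$ come out of the weaker conclusions of the same steps.

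First I would establish monotonicity of $x(\cdot,w)$ in $t$. For $h>0$ set $y_h(t):=x(t+h,w)$, so that $y_h$ solves (\ref{eq:sys1}) with $y_h(0)=x(h,w)$. Using $x(h,w)=w+\int_0^h f(x(s,w))\,ds$ together with continuity of $f$ and $f(w)\ll 0$, I would fix $h_0>0$ with $x(h,w)\ll w$ for all $h\in(0,h_0]$. Monotonicity of the flow then gives $x(t+h,w)=x(t,x(h,w))\leq x(t,w)$ for all admissible $t\geq 0$ and all $h\in(0,h_0]$; since the set of $h\geq 0$ with this property is closed under addition and contains $(0,h_0]$, it equals $[0,\infty)$, so $t\mapsto x(t,w)$ is non-increasing on $\mathcal{I}_w$. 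The same argument with $\ll$ weakened to $\leq$ handles $f(w)\leq 0$.

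For strictness I would pass to the variational equation. Since $f\in C^1$, the map $t\mapsto x(t,w)$ is $C^2$ and $v(t):=\dot x(t,w)=f(x(t,w))$ satisfies $\dot v(t)=A(t)v(t)$ with $A(t):=\frac{\partial f}{\partial x}(x(t,w))$, which is Metzler by cooperativity. I would then invoke the fact that the transition matrix $\Phi(\cdot,\cdot)$ of such a Metzler time-varying linear system is entrywise nonnegative with strictly positive diagonal entries, as follows by comparison with the diagonal system $\dot y=\diag(A(t))\,y$, whose transition matrix is $\diag\!\big(\exp\int_s^t a_{ii}(\tau)\,d\tau\big)$. Applying this to $-v$, which solves the same equation with $-v(0)=-f(w)\gg 0$, yields $-v(t)=\Phi(t,0)(-v(0))\gg 0$, i.e.\ $\dot x(t,w)\ll 0$ throughout $\mathcal{I}_w$; integrating, $x(t,w)-x(s,w)=\int_s^t\dot x(\tau,w)\,d\tau\ll 0$ for $0\leq s<t<T_{\max}(w)$, which is the claim. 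The same computation with $v(0)\leq 0$ and $\Phi(t,0)\geq 0$ reproves the non-strict statement. Note this last argument already subsumes the first, the sign of $\dot x(\cdot,w)$ alone forcing monotonicity in $t$; I keep the first step only because it is the one in the spirit of monotone systems theory.

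The part requiring most care is the nonnegativity-with-positive-diagonal property of the transition matrix of a Metzler time-varying linear system; this is the infinitesimal counterpart of monotonicity of positive linear systems and I would derive it from the variation-of-constants formula after splitting $A(t)=\diag(A(t))+N(t)$ with $N(t)\geq 0$. The remaining points — the $C^2$-regularity of the trajectory, and the fact that $w$, though possibly in $\bd(\RR^n_+)$, lies in the open set $\WW$ on which $f$ is defined so that the shifts and derivatives above are legitimate — are routine. Failing all this, one invokes \cite[Prop.~3.2.1]{Smi95} directly.
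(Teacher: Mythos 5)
Your proposal is correct, but note that the paper does not actually prove this lemma: it is stated as an immediate consequence of \cite[Prop.~3.2.1]{Smi95}, which is precisely the fallback you mention in your last line, so your self-contained argument is a reconstruction of Smith's result rather than of anything in the paper. What you do differently is to supply that proof yourself: weak monotonicity via the time shift $y_h(t)=x(t+h,w)$ together with monotonicity of the flow, and strictness via the variational equation $\dot v(t)=A(t)v(t)$ for $v(t)=f(x(t,w))$, using that the transition matrix of a time-varying Metzler system is entrywise nonnegative with strictly positive diagonal, so that $-v(0)=-f(w)\gg 0$ propagates to $-v(t)\gg 0$ and integration yields $x(t,w)\ll x(s,w)$ for $s<t$. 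Both steps are sound, and as you note the second subsumes the first; the price of your route is having to establish the nonnegativity-with-positive-diagonal property of the transition matrix (standard, via the splitting $A(t)=\diag(A(t))+N(t)$, $N(t)\geq 0$, and Picard iteration), while the citation buys the whole statement at once. One caveat if you keep the self-contained version: the paper's convention is that ``cooperative'' means cooperative on $\RR^n_+$ (or on $\UU$), so $A(t)$ is only guaranteed Metzler while $x(t,w)$ remains in that set; this issue is implicit in the paper's formulation too and is harmless in the applications, where the relevant regions ($\RR^n_+$, $\Sigma_n$) are forward invariant, but a careful write-up should either restrict to such an invariant region or assume cooperativity on all of $\WW$.
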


%Positivity of Coop with equil at the origin
%Definitions of Stability
\textit{Switched Systems}

The main contributions of this manuscript extend results for
time-invariant compartmental SIS models to switched SIS models.  We now
briefly recall some fundamental concepts related to switched nonlinear
systems, \cite{Lib03, Sho07}. Let $\WW$ be an open neighbourhood of
$\RR^n_+$. Consider a family $\{f_1, \ldots , f_m\}, m\in\N$, where
$f_i:\mathcal{W} \rightarrow \mathbb{R}^n$ is $C^1$, $1 \leq i\leq m$.  We
assume that the associated autonomous systems $\dot{x} = f_i(x)$, are
forward complete on $\RR^n_+$, that is, the unique solution $x(\cdot, x^0)$ is
defined on $[0,\infty)$ for all $x^0 \in \mathbb{R}^n_+$, $1 \leq i
\leq m$.

Associated to the family $\{f_1, \ldots , f_m\}$ and a set of switching
signals $\sigma:\RR_+ \to \{1,\ldots,m\}$ we consider the switched system
\begin{equation}\label{eq:switched}
\dot{x}(t)=f_{\sigma(t)}x(t) \quad \text{a.e.}\quad \; x(0) = x^0\,.
\end{equation}

By $\overline{\mathcal{S}}$ we denote the set of measurable functions
$\sigma(\cdot): \RR_+ \mapsto \{1, \cdots, m\}$. We note that for each $\sigma
\in \overline{\mathcal{S}}$ the Carath{\'e}odory conditions are satisfied,
so that existence and uniqueness of solutions is guaranteed,
\cite{CoddLevi55}. 
We refer to $\dot{x} = f_i(x)$ as the $i$th constituent
system for $1 \leq i \leq m$.

The subset $\mathcal{S}\subset \overline{\mathcal{S}}$ denotes the set of
all piecewise constant mappings $\sigma(\cdot): \RR_+ \mapsto \{1, \ldots,
m\}$ that are continuous from the right and for which there exists some
$\tau > 0$ such that $t - s \geq \tau$ for any two points of discontinuity
$t, s$ of $\sigma$.

Note that the set $\mathcal{S}\subset\overline{\mathcal{S}}$ is dense in
$\overline{\mathcal{S}}$, considered as subsets of $L^\infty(\RR_+,\RR)$
endowed with the weak$^*$ topology. As a consequence solutions of
\eqref{eq:switched} for $\sigma\in \overline{\mathcal{S}}$ may be
approximated, on any compact interval, arbitrarily well by solutions
corresponding to $\sigma\in {\cal S}$. Throughout the paper, we use the
notation $x(\cdot, x^0, \sigma)$ to denote the solution corresponding to
$\sigma \in \overline{\mathcal{S}}$ and the initial condition $x^0 \in
\mathbb{R}^n_+$.

The points of discontinuity $0 = t_1, t_2, \ldots $ of $\sigma\in{\cal S}$
are the \textit{switching instants}.

By assumption for every $\sigma\in{\cal S}$, there is some positive
constant $\tau$ such that $t_{i+1} - t_i \geq \tau$ for all $i$; for
switched SIS models, where switching corresponds to some seasonal or
diurnal change, this is entirely reasonable.
%As each constituent system has unique solutions defined on
%$[0, \infty)$ for each initial condition $x^0 \in \mathbb{R}^n_+$,
%it is easy to see that for (\ref{eq:switched}) also has unique
%solutions defined on $[0, \infty)$ for all initial conditions in
%$\mathbb{R}^n_+$.

We next recall various fundamental stability concepts.  As we are
dealing with positive systems, all definitions
are with respect to the state space $X = \RR^n_+$.
\begin{definition}\label{def:stability}
    Let $\sigma \in \overline{\mathcal{S}}$ be given and let $\bar{x}$ be
    an equilibrium of the system (\ref{eq:switched}). Then we say that the
    equilibrium point $\bar{x}$ is
\begin{enumerate}[(i)]
  \item \emph{stable}, if for every $\varepsilon>0$, there exists a
    $\delta=\delta(\varepsilon)>0$ such that
\begin{equation}\nonumber
    \|x^0-\bar{x}\|<\delta \Rightarrow \|x(t,x^0, \sigma)-\bar{x}\|<\varepsilon,
    \quad \forall t>0.
\end{equation}
\item \emph{unstable}, if it is not stable;
\item \emph{asymptotically stable} if it is stable and there exists a
  neighbourhood $N$, relatively open in $\RR^n_+$, of $\bar{x}$ such that
\begin{equation}\nonumber
 x^0 \in N \Rightarrow \lim_{t\rightarrow \infty}x(t, x^0, \sigma)=\bar{x}.
\end{equation}
\end{enumerate}
The \textit{domain of attraction} of $\bar{x}$ is given by
\[A(\bar{x}):= \{x^0 \in \RR^n_+: x(t, x^0, \sigma)
\rightarrow \bar{x}, \mbox{ as } t \rightarrow \infty \}\,.\]
If $A(\bar{x}) =
\RR^n_+$, then we say that $\bar{x}$ is globally asymptotically
stable.
\end{definition}
When discussing uniform stability with respect to a set of switching signals, it is convenient to make use of  $\mathcal{K}$ and $\mathcal{KL}$ functions (the definitions given above for a fixed $\sigma$ can also be formulated equivalently in these terms).  A function $\alpha : \RR_+ \rightarrow \RR_+$ is of class $\mathcal{K}$ if $\alpha(0) = 0$ and $\alpha$ is strictly increasing.  A function $\beta: \RR_+ \times \RR_+ \rightarrow \RR_+$ is of class $\mathcal{KL}$ if $\beta(\cdot, t)$ is of class $\mathcal{K}$ for every fixed $t \geq 0$ and $\beta(r, t) \rightarrow 0$ as $t \rightarrow \infty$ for each fixed $r \geq 0$.

We say that $\bar{x}$ is a uniformly stable equilibrium of (\ref{eq:switched}) with respect to $\mathcal{S}$ ($\overline{\mathcal{S}}$) if there exists a class $\mathcal{K}$ function $\alpha$ such that
\begin{equation}
\|x(t, x^0, \sigma) - \bar{x} \| \leq \alpha(\|x^0 - \bar{x}\|) \quad \forall t \geq 0
\end{equation}
for all $x^0 \in \RR^n_+$ and all $\sigma \in \mathcal{S}$ ($\sigma \in \overline{\mathcal{S}}$).

In addition, $\bar{x}$ is a uniformly globally asymptotically stable equilibrium of (\ref{eq:switched}) with respect to $\mathcal{S}$ ($\overline{\mathcal{S}}$) if there exists a class $\mathcal{KL}$ function $\beta$ such that
\begin{equation}
\|x(t, x^0, \sigma) - \bar{x} \| \leq \beta(\|x^0 - \bar{x}\|, t) \quad \forall t \geq 0
\end{equation}
for all $x^0 \in \RR^n_+$ and all $\sigma \in \mathcal{S}$ ($\sigma \in \overline{\mathcal{S}}$).  They key concept here is that the rate of convergence as $t \rightarrow \infty$ is uniform across all switching signals in $\mathcal{S}$ ($\overline{\mathcal{S}}$), where this uniformity is guaranteed by the same $\mathcal{KL}$ function $\beta$ for all $\sigma$ in the set of signals.
%\marginpar{This is not quite
%  correct. Uniformly should mean, that the rates of attraction are uniform
%  over all $\sigma$. In the original definition of ${\cal S}$ it is
%  possible to construct examples where all trajectories converge to $0$
%  but at arbitrarily slow rate.}

Finally, we need some formulations of Lipschitz continuous Lyapunov
functions that will be used in later proofs. Let $D\subset \RR^n$ be open
and $h:D\to \RR$ be locally Lipschitz continuous. By Rademacher's theorem
this implies that $h$ is differentiable almost everywhere. The {\em Clarke
  generalized gradient} of $h$ at $x\in D$ may then be defined by, \cite{ClarLedy98}, \cite[Theorem~II.1.2]{DemyRubi95}, 
\begin{equation}
    \label{eq:Clarkedef}
    \partial_C h(x) = \conv \left\{ p  \in \RR^n \midset \exists x_k \to x, \triangledown h(x_k) \text{ exists }, \lim_{k\to\infty} \triangledown h(x_k) = p \right\}\,.
\end{equation}
In particular, it is an exercise to see that the Clarke gradient of a norm
$v$ at a point $x\neq 0$ is given by normed dual vectors, i.e.
\begin{equation}
    \label{eq:dualClarke}
    \partial_C v(x) = \{ y \in \RR^n  \midset v^*(y) = 1, y \text{ is dual to } x \}\,.
\end{equation}
A proper, positive definite, Lipschitz continuous function $V$ is a strict
Lyapunov function for the switched system \eqref{eq:switched}, if
\begin{equation}
    \label{eq:Lyapcond}
    \langle p , f_j(x) \rangle \leq -\alpha(\|x\|)
\end{equation}
for all $p\in \partial_C V(x)$ and $j=1,\ldots,m$ and some positive
definite function $\alpha:\RR_+\to \RR_+$. If a strict Lyapunov function
exists, this implies uniform asymptotic stability of the origin
with respect to $\sigma\in\overline{S}$. If in \eqref{eq:Lyapcond}
$-\alpha(\|x\|)$ has to be replaced by $0$, then we speak of a nonstrict
Lyapunov function, the existence of which implies uniform stability,
\cite[Theorem~4.5.5]{ClarLedy98}.

%%%%%%%%%%%%%%%%%%%%%%%%%%%%%%%%%%%%%%%%%%%%%%%%%%%%%%%%%%%%%%%%%%
%%%%%%%%%%%%%%%%%%%%%%%%%%%%%%%%%%%%%%%%%%%%%%%%%%%%%%%%%%%%%%%%%%
\section{Problem Description}\label{sec:problem}
{ We now recall some results concerning the basic autonomous SIS
model considered in \cite{FIST07}.

The population of interest is assumed to be divided into $n$ groups; each
group is then further divided into two classes: infectives and
susceptibles.  Let $I_i(t)$ and $S_i(t)$ be the number of infectives
and susceptibles at time $t$ in group $i$ for $i=1, \ldots, n$,
respectively. Also, let $N_i(t)=S_i(t)+I_i(t)$ be the total
population of group $i$.  The total population of each group is
assumed constant: formally, $N_i(t)=N_i,  \forall \ t \geq 0$.

The constants $\beta_{ij}$ denote the rate at which susceptibles in group $i$ are
infected by infectives in group
 $j$ for $i,j=1, \cdots, n$.  Further, $\gamma_i$ is the rate at which an infective individual in group
  $i$ is cured.  As the total population of each group is constant, within each group the birth
  and death rates are equal and denoted by $\mu_i$.  Using the mass-action law, the basic
   SIS model is described as follows \cite{FIST07}:
\begin{equation}\nonumber %\label{eq:SIS1}
\left\{ \begin{array}{l}
\dot{S}_i(t) = \mu_i N_i - \mu_i S_i(t)
-\displaystyle \sum_{j=1}^n{\beta_{ij} \frac{S_i(t) I_j(t)}{N_i}} + \gamma_i I_i(t) \\
\dot{I}_i(t) = \displaystyle \sum_{j=1}^n{\beta_{ij}
\frac{S_i(t)I_j(t)}{N_i}} - (\gamma_i+\mu_i) I_i(t)
\end{array} \right.
\end{equation}
Since the population of each group is constant, it is sufficient to
know $I_i(t)$. If we set $x_i(t)=I_i(t)/N_i$ and
$\tilde{\beta}_{ij}=\beta_{ij} N_j/N_i$ and $\alpha_i=\gamma_i+\mu_i$,
we obtain
\begin{equation}\nonumber %\label{eq:SIS2}
\dot{x}_i(t)=(1-x_i(t))\displaystyle \sum_{j=1}^n{\tilde \beta_{ij}
x_j(t)} -\alpha_i x_i(t),
\end{equation}
which can be written in the compact form:
\begin{equation}\label{eq:Fall}
\dot{x}=[-D+B-\diag(x)B]x,
\end{equation}
where $D=\diag(\alpha_i)$ and $B=(\tilde{\beta}_{ij})>0$. This is
the model considered in \cite{LY76, FIST07}.  In \cite{FIST07} and throughout this paper, we assume that $\alpha_i > 0$ for $1 \leq i \leq n$.  This is biologically reasonable, as otherwise there would exist a compartment for which both the birth-rate and the rate at which infectives are cured were zero.  This assumption has implications for the location of so-called endemic equilibria on which we shall elaborate below.

The following properties of (\ref{eq:Fall}) are easy to check.
\begin{itemize}
  \item[(i)] The compact set $\Sigma_n:=\{x \in \mathbb{R}^n_+: x_i \leq 1,
    i=1,\ldots,n\}$ is forward invariant under (\ref{eq:Fall}).
\item[(ii)] The origin $0$ is an equilibrium point of (\ref{eq:Fall}).
This is referred to as the \emph{Disease Free Equilibrium} (DFE) of (\ref{eq:Fall}).
\item[(iii)] For every $x^0 \in \Sigma_n$, there exists a unique solution $x(t, x^0)$
 of (\ref{eq:Fall}) defined for all $t \geq 0$.
\end{itemize}
If there exists an equilibrium point $\bar{x}$ in
$\intt(\mathbb{R}^n_+)$, we refer to $\bar{x}$ as an \emph{endemic
equilibrium}.  Note that as $\alpha_i > 0$ for $1 \leq i \leq n$, it follows readily that any endemic equilibrium must lie in $\intt(\mathbb{R}^n_+) \cap \intt(\Sigma_n)$.

%%%%%%%%%%%%%%%%%%%%%%%%%%%%%%%%%%%%%%%%%%%%%%%%%%%%%%%%%%%%%%%%%%%%%%%%
%\subsection{The basic reproduction number}
The basic reproduction number $R_0$, defined as the average number
of secondary infections that occur when one infective is introduced
into a completely susceptible host population, is fundamental in
mathematical epidemiology.  In line with \cite{DW02}, in
\cite{FIST07} this is defined as $R_0=\rho(D^{-1}B)$.  From basic
properties of Metzler matrices it is easy to see that $R_0 < 1$ if
and only if $\mu(D + B) < 0$ \cite{HornJohn}.

The parameter $R_0$ is used to characterize the existence and
stability of the equilibria of (\ref{eq:Fall}).  The following
result is Theorem 2.3 in \cite{FIST07}.
\begin{theorem}\label{thm:Fall1}
Consider the system (\ref{eq:Fall}).  Assume that the matrix $B$ is
irreducible.  The DFE at the origin is globally asymptotically
stable if and only if $R_0\leq 1$.
\end{theorem}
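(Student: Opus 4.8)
The plan is to treat the two implications separately and to exploit the structure of \eqref{eq:Fall} as a cooperative (monotone) positive system on the invariant box $\Sigma_n$. Write $f(x) = [-D+B-\diag(x)B]x$. A direct computation shows the Jacobian at $x$ is $-D + B - \diag(x)B - \diag(Bx)$; since $B\geq 0$ and the extra diagonal term $-\diag(Bx)$ only affects diagonal entries, $f$ is cooperative on $\RR^n_+$, and in fact on $\Sigma_n$ the off-diagonal entries of the Jacobian are exactly those of $(I-\diag(x))B \geq 0$. Note also that $f(x) \leq (-D+B)x$ for all $x\in\RR^n_+$, since $\diag(x)Bx \geq 0$ there; this comparison with the linearization $\dot z = (-D+B)z$ will drive the ``if'' direction.

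\emph{Sufficiency ($R_0 \leq 1 \Rightarrow$ DFE globally asymptotically stable).} First I would handle $R_0 < 1$, i.e.\ $\mu(-D+B) < 0$. Since $-D+B$ is Metzler and Hurwitz, there is a vector $v \gg 0$ with $(-D+B)^\top v \ll 0$ (a standard Perron--Frobenius/Metzler fact), so $V(x) = \langle v, x\rangle$ is a linear Lyapunov function: along \eqref{eq:Fall}, $\dot V = \langle v, f(x)\rangle \leq \langle v, (-D+B)x\rangle = \langle (-D+B)^\top v, x\rangle < 0$ for $x\in\RR^n_+\setminus\{0\}$. Together with forward invariance of $\Sigma_n$ and (iii), LaSalle gives global asymptotic stability on $\Sigma_n$. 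For the boundary case $R_0 = 1$, i.e.\ $\mu(-D+B)=0$: here the linear Lyapunov argument only yields $\dot V \leq 0$, with $\dot V = 0$ forcing both $\diag(x)Bx = 0$ and $\langle v, (-D+B)x\rangle = 0$. The irreducibility of $B$ is exactly what is needed to rule out a nontrivial invariant set in this zero set: by Perron--Frobenius, $-D+B$ irreducible with $\mu = 0$ has a simple eigenvalue $0$ with left eigenvector $v \gg 0$, and the condition $\langle v,(-D+B)x\rangle = 0$ with $x \geq 0$ forces $(-D+B)x$ to lie in... — more carefully, one argues that the only solution staying in $\{\dot V = 0\}$ is $x\equiv 0$, using that if $x > 0$ then by irreducibility $Bx$ has a positive entry at some index $i$ with $x_i > 0$ (after a short monotonicity/propagation argument), contradicting $\diag(x)Bx = 0$. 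Invoking Lemma~\ref{lem:nonincreas}: if $x^0 \gg 0$ is large, $f(x^0) \ll 0$ need not hold, but one can instead compare with the solution from $(1,\dots,1)^\top$ or from a suitable super-solution and use monotonicity to squeeze trajectories down. I would combine LaSalle with the monotone-systems structure to conclude convergence to $0$ from every $x^0\in\Sigma_n = $ the relevant state space.

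\emph{Necessity ($R_0 > 1 \Rightarrow$ DFE not globally asymptotically stable).} When $R_0 > 1$, $\mu(-D+B) > 0$ and, by irreducibility, the Perron eigenvector $w \gg 0$ of $-D+B$ satisfies $(-D+B)w = \mu(-D+B)\, w \gg 0$. For small $\varepsilon > 0$, $f(\varepsilon w) = \varepsilon(-D+B)w - \varepsilon^2 \diag(w)Bw = \varepsilon\big(\mu(-D+B)w - \varepsilon\diag(w)Bw\big) \gg 0$ once $\varepsilon$ is small enough. By Lemma~\ref{lem:nonincreas} the trajectory from $\varepsilon w$ is nondecreasing in $t$; being bounded in $\Sigma_n$, it converges to an equilibrium $\bar x \geq \varepsilon w \gg 0$, hence an endemic equilibrium, so it does not converge to the DFE. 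This shows global asymptotic stability fails. (One could alternatively just note the DFE is linearly unstable since $\mu(-D+B)>0$, but the monotone argument additionally locates an endemic equilibrium and is in the spirit of \cite{FIST07}.)

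\emph{Main obstacle.} The delicate point is the critical case $R_0 = 1$ in the sufficiency direction: the linear Lyapunov function is only non-strict, so one cannot conclude directly, and one must use irreducibility together with a LaSalle-type invariance argument — or the monotone comparison from Lemma~\ref{lem:nonincreas} applied to a dominating trajectory — to show no nontrivial trajectory is trapped in $\{\dot V = 0\}$ and that everything still converges to $0$. Getting the propagation argument (irreducibility forces the ``infection'' to spread, contradicting $\diag(x)Bx \equiv 0$ on an invariant set) stated cleanly is where the real work lies.
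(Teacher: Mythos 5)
Your proposal is correct in substance, but note that the paper does not actually prove Theorem~\ref{thm:Fall1} itself: it is quoted verbatim from \cite{FIST07} (Theorem 2.3 there), and the paper's own contribution is the generalization in Theorem~\ref{t:stabimpliesasstab} and Corollary~\ref{c:fall-complete}. Your route is the classical Lajmanovich--Yorke/Fall et al.\ one: a linear Lyapunov function $V(x)=\langle v,x\rangle$ built from a (left) Perron vector of the Metzler matrix $-D+B$, strict decrease when $\mu(-D+B)<0$, and in the critical case $\mu(-D+B)=0$ a LaSalle argument in which irreducibility propagates positivity so that no nonzero trajectory can remain in $\{\diag(x)Bx=0\}$; for $R_0>1$ you use $f(\varepsilon w)\gg 0$ together with Lemma~\ref{lem:nonincreas} and monotone convergence to an endemic equilibrium, which is exactly the device the paper itself uses in Proposition~\ref{prop:persis} and Theorem~\ref{thm:stabilis}. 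This is sound (the delicate critical-case step you flag is handled correctly by your propagation idea: if $x\neq 0$ lies in the zero set, irreducibility gives an index $i$ with $x_i=0$, $(Bx)_i>0$, so $x_i$ becomes positive immediately and the trajectory leaves the zero set; and for $R_0<1$ your linear Lyapunov estimate in fact works on all of $\RR^n_+$, while for $R_0=1$ boundedness of orbits lets the same argument cover initial conditions outside $\Sigma_n$, matching the paper's state space $\RR^n_+$). The contrast with the paper's machinery is worth noting: the proof of Theorem~\ref{t:stabimpliesasstab} replaces the linear Lyapunov function by an absolute (extremal) norm $v$ and the LaSalle step by the explicitly perturbed nonsmooth Lyapunov function $V_\varepsilon(x)=v(x)\bigl(1+\sum_i\psi_\varepsilon(x_i)\bigr)$ with Clarke subgradients, precisely because LaSalle-type invariance arguments do not yield decay estimates that are uniform over arbitrary measurable switching signals; in the autonomous case $m=1$ that construction also weakens irreducibility to mere existence of an absolute extremal norm (Corollary~\ref{c:fall-complete}), whereas your argument needs irreducibility at $R_0=1$. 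So your proof buys simplicity for the time-invariant theorem, while the paper's buys uniformity and generality for the switched setting.
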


The next result considers the existence and stability of endemic
equilibria and is a restatement of Theorem 2.4 of \cite{FIST07}.
\begin{theorem}\label{thm:Fall2}
Consider the system (\ref{eq:Fall}) and assume that $B$ is
irreducible.  There exists a unique endemic equilibrium $\bar{x}$ in
$\intt(\RR^n_+)$ if and only if $R_0 > 1$.  Moreover, in this case,
$\bar{x}$ is asymptotically stable with region of attraction
$\RR^n_+\setminus \{0\}$.
\end{theorem}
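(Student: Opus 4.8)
The plan rests on three structural features of \eqref{eq:Fall} on the forward-invariant compact set $\Sigma_n$ (property~(i)). First, its restriction to $\Sigma_n$ is cooperative — for $x\in\Sigma_n$ the off-diagonal Jacobian entries are $(1-x_i)\tilde\beta_{ij}\geq 0$ — hence monotone there, so Lemma~\ref{lem:nonincreas} is available. Second, the vector field satisfies the sublinearity identity $f(\lambda x)=\lambda f(x)+\lambda(1-\lambda)\diag(x)Bx$, so $f(\lambda x)\geq\lambda f(x)$ for $\lambda\in[0,1]$, with strict inequality in every coordinate when $x\gg 0$, $0<\lambda<1$ and $Bx\gg 0$. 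Third, the linearization at the DFE is $\dot x=(-D+B)x$ with $-D+B$ an irreducible Metzler matrix, so by Perron--Frobenius it has a real eigenvalue $\mu(-D+B)$ with eigenvector $v\gg 0$, and $R_0>1$ is equivalent to $\mu(-D+B)>0$ (see the discussion preceding Theorem~\ref{thm:Fall1}). With this, necessity is immediate: if $R_0\leq 1$ then by Theorem~\ref{thm:Fall1} the DFE is globally asymptotically stable, so $0$ is the only equilibrium of \eqref{eq:Fall} and no endemic one exists.

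For sufficiency, suppose $R_0>1$, i.e. $\mu:=\mu(-D+B)>0$. Then $f(\varepsilon v)=\varepsilon\mu v-\varepsilon^2\diag(v)Bv\gg 0$ for all sufficiently small $\varepsilon>0$ (for which also $\varepsilon v\in\Sigma_n$), so by Lemma~\ref{lem:nonincreas} the trajectory $x(\cdot,\varepsilon v)$ is increasing; confined to $\Sigma_n$, it converges to an equilibrium $\bar x\geq\varepsilon v\gg 0$. Since every $\alpha_i>0$, no equilibrium can have a coordinate equal to $1$ (the $i$th equilibrium equation would read $\alpha_i=0$), so in fact $\bar x\in\intt(\RR^n_+)\cap\intt(\Sigma_n)$; this is the required endemic equilibrium. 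For uniqueness let $\hat x\gg 0$ be another interior equilibrium and set $\lambda^\ast=\max\{\lambda>0:\lambda\bar x\leq\hat x\}$; interchanging $\bar x$ and $\hat x$ if needed we may take $\lambda^\ast\leq 1$ (if the maxima for both orderings exceed $1$, then $\bar x\geq\hat x$ and $\hat x\geq\bar x$, so $\bar x=\hat x$). Suppose $\lambda^\ast<1$. Then $\lambda^\ast\bar x\leq\hat x$ with equality in some coordinate $k$, while $f(\lambda^\ast\bar x)=\lambda^\ast(1-\lambda^\ast)\diag(\bar x)B\bar x\gg 0$ since $B\bar x\gg 0$ by irreducibility of $B$; Lemma~\ref{lem:nonincreas} then makes $x(\cdot,\lambda^\ast\bar x)$ strictly increasing, whereas monotonicity and $\lambda^\ast\bar x\leq\hat x$ force $x(t,\lambda^\ast\bar x)\leq x(t,\hat x)=\hat x$ for all $t$, contradicting the equality in coordinate $k$. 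Hence $\lambda^\ast=1$, and the symmetric argument gives $\bar x=\hat x$.

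It remains to show $A(\bar x)=\RR^n_+\setminus\{0\}$. Since $\dot x_i<0$ whenever $x_i>1$, a trajectory starting outside $\Sigma_n$ enters $\Sigma_n$ in finite time; by positivity $\RR^n_+$ is forward invariant, by irreducibility a nonzero initial point on $\bd(\RR^n_+)$ leaves it at once, and by uniqueness of solutions no nonzero trajectory reaches $0$; so it suffices to prove $x(t,x^0)\to\bar x$ for $x^0\in\Sigma_n$ with $x^0\gg 0$. Pick $\varepsilon>0$ with $\varepsilon v\leq x^0$ and $f(\varepsilon v)\gg 0$: then $x(\cdot,\varepsilon v)$ increases to an interior equilibrium, which by uniqueness is $\bar x$. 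On the other side $f(\mathbf 1)=-(\alpha_1,\dots,\alpha_n)^\top\ll 0$, so $x(\cdot,\mathbf 1)$ decreases and, dominating $x(\cdot,\bar x)=\bar x$, converges to $\bar x$ as well. Since $\varepsilon v\leq x^0\leq\mathbf 1$, monotonicity yields $x(t,\varepsilon v)\leq x(t,x^0)\leq x(t,\mathbf 1)$ for every $t$, and the squeeze gives $x(t,x^0)\to\bar x$. Lyapunov stability of $\bar x$ follows from the same comparison on the box $[\lambda\bar x,c\bar x]$ with $\lambda<1<c$ near $1$: using $f(\lambda\bar x)\gg 0$ and $f(c\bar x)\ll 0$, the trajectories from $\lambda\bar x$ and $c\bar x$ are monotone and remain in that box, so any trajectory starting in a sufficiently small neighbourhood of $\bar x$ stays close to $\bar x$. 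The steps I expect to need the most care are the uniqueness sweeping argument and the use of irreducibility of $B$ to obtain the strong-monotonicity / interior-penetration facts; the reductions handling initial conditions outside $\Sigma_n$ or on $\bd(\RR^n_+)$, and the low-dimensional case $n=1$, are routine but should be dispatched explicitly.
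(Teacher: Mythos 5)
Your argument is essentially correct, but note that the paper itself offers no proof of this statement: it is quoted verbatim as Theorem 2.4 of \cite{FIST07}, and the cited source establishes it by Lyapunov-function techniques in the tradition of \cite{LY76} (linear and max-type Lyapunov functions built from the Perron eigenvector and the endemic equilibrium). What you give instead is a self-contained monotone-systems proof: Perron--Frobenius for the irreducible Metzler linearisation to produce $f(\varepsilon v)\gg 0$, Lemma~\ref{lem:nonincreas} to generate an increasing trajectory converging to an interior equilibrium, the sublinearity identity $f(\lambda x)=\lambda f(x)+\lambda(1-\lambda)\diag(x)Bx$ together with a Krasnoselskii-type sweeping argument for uniqueness, and an order-interval squeeze $x(t,\varepsilon v)\leq x(t,x^0)\leq x(t,\mathbf 1)$ for global attractivity plus nested boxes $[\lambda\bar x,c\bar x]$ for Lyapunov stability. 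This route buys transparency and reuses exactly the machinery the present paper already sets up (cooperativity on $\Sigma_n$, Lemma~\ref{lem:nonincreas}, irreducibility giving $B\bar x\gg 0$ and strong positivity), whereas the Lyapunov-function route of \cite{FIST07} yields explicit certificates of stability. A few points you flag as routine do deserve one line each if this were written out: Lemma~\ref{lem:nonincreas} is stated for fields cooperative on $\RR^n_+$, while here cooperativity holds only on $\Sigma_n$, so you must invoke the localized version on the forward-invariant order-convex set $\Sigma_n$ (which is what \cite[Prop.~3.2.1]{Smi95} actually provides); the convergence of a monotone bounded trajectory to an equilibrium should be cited; and finite-time entry into $\Sigma_n$ follows not from $\dot x_i<0$ alone but from the quantitative bound $\dot x_i\leq-\alpha_i x_i\leq -\alpha_i$ valid while $x_i\geq 1$. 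With those remarks supplied, the proof is complete and correct.
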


} 
Note that as $\Sigma_n$ is forward invariant under (\ref{eq:Fall}), it follows readily that
the unique endemic equilibrium discussed in Theorem \ref{thm:Fall2} must be in $\Sigma_n$.  Furthermore,
as $\alpha_i > 0$ for $1 \leq i \leq n$, it follows readily that it must
lie in $\intt(\mathbb{R}^n_+) \cap  \Sigma_n$.
%%FRWnew

\section{Positive linear switched systems}
\label{sec:PosLinSys}

To facilitate our later analysis of the linearisation of a switched
version of the SIS model \eqref{eq:Fall} we need some results on positive
switched linear systems, which are not available in the literature. This
is the topic of the present section. The material presented here is a
consequence of results presented in \cite{ABMW12}. As it is essential for
the following arguments we include it for the benefit of the reader.

We make use of a formulation of the joint
spectral radius for continuous time systems as described in \cite{Wirt02}.
Let $\MM \subset \RR^{n \times n}$ be a compact set of matrices. In
particular, a finite set ${\cal M}=\{A_1,\cdots, A_m\}$ may be considered.
The set ${\cal M}$ gives rise to the switched linear system
\begin{equation}
    \label{eq:swlinsys}
    \dot x (t) = A_{\sigma(t)} x(t) \,,\quad \sigma \in \overline{{\cal S}}\,.
\end{equation}

If we fix $\sigma:\RR_+\to {\cal M}$, then the evolution operator
$\Phi_\sigma(\cdot)$ (with initial time $t_0=0$) corresponding to
\eqref{eq:swlinsys} is given as the solution of the matrix differential
equation
\begin{equation*}
    \dot \Phi_\sigma(t) = A_\sigma(t) \Phi_\sigma(t)
     \,,\quad \Phi_\sigma(0) = I\,.
\end{equation*}
Considering the set of all evolution operators we define a matrix
semigroup $\mathcal{H}$ as follows. The set of time $t$ evolution
operators is given by
\begin{equation*}
    {\cal H}_t := \{ \Phi_\sigma(t) \midset \sigma: [0,t] \to {\cal M} \text{ measurable} \}
\end{equation*}
and ${\cal H}:= \bigcup_{t\in\RR_+} {\cal H}_t$, where we set ${\cal H}_0
:=\{I\}$. Define the growth at time $t$ by
\begin{eqnarray*}\nonumber
    \rho_t(\MM):=\sup_{\sigma\in \overline{{\cal S}}}\ \frac{1}{t} \log
\|\Phi_\sigma(t)\| \,.
\end{eqnarray*}
Then the {\em joint Lyapunov exponent} of \eqref{eq:swlinsys} is
given by
\begin{equation}\label{eq:JSR}
\rho(\MM):=\displaystyle \lim_{t \rightarrow
\infty} \ \rho_t(\MM)\,.
\end{equation}
We note that the definition of $\rho({\cal M})$ does not depend on the
choice ${\cal S}$ versus $\overline{{\cal S}}$. Similarly to the {\em
  joint spectral radius} of a set of matrices, the joint Lyapunov exponent
can be used to characterise uniform asymptotic stability of linear
switched systems; see \cite{Wirt02} and references therein.  For our
purposes the following fact is sufficient.

\begin{lemma}\label{lem:JSR}
The joint Lyapunov exponent $\rho({\cal M})<0$ if and only if
the origin is a uniformly asymptotically stable
equilibrium of \eqref{eq:swlinsys}.
\end{lemma}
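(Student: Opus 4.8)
The plan is to derive both implications from the definition of the joint Lyapunov exponent $\rho(\MM)$ in \eqref{eq:JSR} together with the semigroup structure of the evolution operators $\Phi_\sigma$. The key observation is that the quantity $t\mapsto \rho_t(\MM)$ is subadditive after rescaling: because $\Phi_{\sigma_1\ast\sigma_2}(t+s) = \Phi_{\sigma_2}(s)\Phi_{\sigma_1}(t)$ for the concatenation of switching signals, the submultiplicativity of the operator norm gives $\sup_\sigma\|\Phi_\sigma(t+s)\| \le \bigl(\sup_\sigma\|\Phi_\sigma(t)\|\bigr)\bigl(\sup_\sigma\|\Phi_\sigma(s)\|\bigr)$, so Fekete's subadditive lemma guarantees that the limit in \eqref{eq:JSR} exists and equals $\inf_{t>0}\rho_t(\MM)$. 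This will be invoked freely below.

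For the direction $\rho(\MM)<0 \Rightarrow$ uniform asymptotic stability, I would argue as follows. Since the limit equals the infimum, $\rho(\MM)<0$ means there is a time $T>0$ with $\rho_T(\MM) =: c < 0$, i.e.\ $\sup_{\sigma}\|\Phi_\sigma(T)\| \le e^{cT} =: q < 1$. Iterating the concatenation identity, for any $\sigma$ and any $k\in\N$ we get $\|\Phi_\sigma(kT)\| \le q^k$. For an arbitrary time $t = kT + r$ with $0\le r < T$, I split $\Phi_\sigma(t) = \Phi_{\sigma(\cdot+kT)}(r)\,\Phi_\sigma(kT)$ and bound $\|\Phi_\sigma(r)\|$ uniformly on $[0,T]$ by a constant $C := \sup\{\|\Phi_\sigma(r)\| : 0\le r\le T,\ \sigma \text{ measurable}\}$, which is finite by Gronwall's inequality applied with $M := \sup_{A\in\MM}\|A\|$. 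This yields $\|\Phi_\sigma(t)\| \le C q^{-1} e^{(c/T) t}$ for all $\sigma$ and all $t\ge 0$, an exponential bound uniform in $\sigma$; uniform (global) asymptotic stability of the origin of the linear system \eqref{eq:swlinsys} follows immediately, since $\|x(t,x^0,\sigma)\| \le \|\Phi_\sigma(t)\|\,\|x^0\|$.

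For the converse, suppose the origin is uniformly asymptotically stable. By the usual $\mathcal{KL}$ characterization of uniform asymptotic stability there is $\beta\in\mathcal{KL}$ with $\|x(t,x^0,\sigma)\|\le\beta(\|x^0\|,t)$ for all $x^0$, $t\ge0$, $\sigma$; taking the supremum over $\|x^0\|\le 1$ gives $\|\Phi_\sigma(t)\| \le \beta(1,t)$ uniformly in $\sigma$. Pick $T$ large enough that $\beta(1,T) < 1$; then $\rho_T(\MM) \le \frac{1}{T}\log\beta(1,T) < 0$, and since $\rho(\MM) = \inf_{t>0}\rho_t(\MM) \le \rho_T(\MM)$, we conclude $\rho(\MM) < 0$. (Alternatively, one can invoke directly the characterization of uniform exponential stability of positive, or general, switched linear systems from \cite{Wirt02}, which already contains the equivalence; but the self-contained argument above is short enough to spell out.)

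The only genuinely delicate point is the finiteness of the constant $C$ bounding $\|\Phi_\sigma(r)\|$ uniformly over $r\in[0,T]$ and all measurable $\sigma$; this is where compactness of $\MM$ enters, via the bound $\frac{d}{dr}\|\Phi_\sigma(r)\| \le M\|\Phi_\sigma(r)\|$ and Gronwall, giving $C \le e^{MT}$. Everything else is bookkeeping with the cocycle property. One should also note, as the excerpt already remarks, that the value of $\rho(\MM)$ is insensitive to whether one ranges over $\mathcal{S}$ or $\overline{\mathcal{S}}$, by the density argument given earlier, so the statement is unambiguous.
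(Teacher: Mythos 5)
Your proof is correct, but note that the paper does not actually prove Lemma~\ref{lem:JSR}: it is stated as a known fact, with a pointer to \cite{Wirt02} and the references therein. What you supply is therefore a self-contained elementary argument rather than a reproduction of the paper's route. Your ingredients are the standard ones: the cocycle (concatenation) identity makes $t\mapsto\log\sup_\sigma\|\Phi_\sigma(t)\|$ subadditive, so Fekete's lemma identifies the limit in \eqref{eq:JSR} with the infimum over $t>0$ (local boundedness coming from Gronwall and compactness of $\MM$); the forward implication follows by iterating a contractive time-$T$ bound, and the converse by sampling a $\mathcal{KL}$ estimate at a time $T$ with $\beta(1,T)<1$. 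This buys the reader independence from the cited literature at the cost of a few lines, whereas the paper's citation buys brevity and also covers related material (e.g. the extremal-norm theory) that your argument does not attempt to reprove. Two cosmetic remarks: with your normalization $c=\rho_T(\MM)$, the iteration gives the bound $Cq^{-1}e^{ct}$, not $Cq^{-1}e^{(c/T)t}$ (harmless, since both exponents are negative, but the rate produced is $c$); and since the paper's stability definitions are phrased on the state space $\RR^n_+$, the supremum in the converse should be taken over nonnegative $x^0$ in the unit ball, which still controls $\|\Phi_\sigma(t)\|$ up to a fixed constant because the evolution operators are nonnegative in the Metzler setting where the lemma is later applied.
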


%new20130424
Also while it is necessary for the uniform exponential stability of
\eqref{eq:swlinsys} that all the matrices in the convex hull $\conv({\cal
  M})$ are Hurwitz, this is not a sufficient condition, even if ${\cal M}$
consists of Metzler matrices \cite{FainMarChig,GurShoMas}.
%end of new part

In the analysis of linear inclusions extremal norms play an interesting
role.

\begin{definition}{}
    \label{d:exnorm}
    Let ${\cal M}$ be a compact set of Metzler matrices.  A
    norm $v$ on $\RR^n$ is called extremal for the associated semigroup
    ${\cal H}$, if for all $x\in \RR^n$ and all $t\geq0$ we have
    \begin{equation}
        \label{eq:exnormineq}
        v(Sx) \leq \exp(\rho({\cal M})t)\ v(x)
      \,,\quad \forall\ S\in {\cal H}_t\,.
    \end{equation}
\end{definition}

For the analysis of the switched SIS model \eqref{eq:Fall} it will be
instrumental to analyse the existence of extremal norms for its
linearisation. We will show the more general statement that for positive
switched linear systems an absolute extremal norm exists, if an
irreducibility property is satisfied.

Associated to a set of Metzler matrices ${\cal M}$ we
consider the directed graph ${\cal G}({\cal M})=(V,E)$ with vertex set
$V=\{1,\ldots,n\}$ and edges defined for $i\neq j, 1\leq i,j\leq n$ by
\begin{equation}
    \label{eq:nodedef}
    (i,j) \in E \quad :\Leftrightarrow \quad \exists A\in{\cal M} \text{ with }
    a_{ij} > 0\,.
\end{equation}
Note that we explicitly do not define edges from a vertex $i$ to
itself. The graph ${\cal G}({\cal M})$ is called strongly connected if for
all $i,j$ there is a path form $i$ to $j$ using edges in $E$.

\begin{lemma}
    \label{lem:connectedchar}
    Let ${\cal M}$ be a set of Metzler matrices. Then ${\cal G}({\cal M})$
    is strongly connected if and only if for all $i,j\in V$ there exist
    $k\in \N$, $(A_1,\ldots,A_k) \in {\cal M}^k$ and
    $t_1,\ldots,t_k> 0$ such that
        \begin{equation*}
           \left(e^{A_1t_1}\dots e^{A_kt_k} \right)_{ij} >0\,.
        \end{equation*}
\end{lemma}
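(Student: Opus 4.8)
The plan is to translate both sides of the claimed equivalence into a purely combinatorial statement about directed paths in $\mathcal{G}(\mathcal{M})$, using the elementary behaviour of exponentials of Metzler matrices. The single fact I would establish first is: for a Metzler matrix $A$ and $t>0$, the matrix $e^{At}$ is nonnegative with strictly positive diagonal, and $(e^{At})_{pq}>0$ holds if and only if $p=q$ or there is a directed path from $p$ to $q$ using the off-diagonal positive entries of $A$. This follows by writing $A=-cI+N$ with $c$ chosen large enough that $N:=A+cI$ is nonnegative with strictly positive diagonal, so that $e^{At}=e^{-ct}\sum_{m\ge 0}\frac{t^m}{m!}N^m$; since $N\ge 0$ the $(p,q)$-entry of the sum is positive precisely when $(N^m)_{pq}>0$ for some $m\ge 0$, i.e.\ when there is a walk of length $m$ from $p$ to $q$ in the digraph of $N$, and because $N$ has positive diagonal (hence all self-loops present) this reduces to the stated reachability condition. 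Note that every off-diagonal edge of $A$ used here is an edge of $\mathcal{G}(\mathcal{M})$, and conversely $a_{pq}>0$ already forces $(e^{At})_{pq}>0$ for all $t>0$.

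Next I would combine this with the trivial observation that for nonnegative matrices $M_1,\ldots,M_k$ one has $(M_1\cdots M_k)_{ij}>0$ if and only if there is a chain $i=l_0,l_1,\ldots,l_k=j$ with $(M_s)_{l_{s-1}l_s}>0$ for each $s$, since every summand in the expansion of the product is nonnegative. Taking $M_s=e^{A_s t_s}$ and applying the first step to each factor, positivity of $(e^{A_1t_1}\cdots e^{A_kt_k})_{ij}$ becomes exactly the existence of a concatenation of (possibly trivial) directed paths in $\mathcal{G}(\mathcal{M})$ leading from $i$ to $j$.

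With these tools both implications are short. For ``$\Rightarrow$'', fix $i,j$: if $i=j$, take $k=1$ and any $A\in\mathcal{M}$, $t_1>0$, so that $(e^{At_1})_{ii}>0$; if $i\neq j$, strong connectivity gives a path $i=v_0\to\cdots\to v_r=j$ in $\mathcal{G}(\mathcal{M})$, and choosing $A_s\in\mathcal{M}$ with $(A_s)_{v_{s-1}v_s}>0$ and arbitrary $t_s>0$ yields, by nonnegativity of the factors,
\[
(e^{A_1t_1}\cdots e^{A_rt_r})_{ij}\ \ge\ \prod_{s=1}^{r}(e^{A_st_s})_{v_{s-1}v_s}\ >\ 0.
\]
For ``$\Leftarrow$'', given $i\neq j$ and data with $(e^{A_1t_1}\cdots e^{A_kt_k})_{ij}>0$, the two observations produce a nontrivial directed walk from $i$ to $j$ in $\mathcal{G}(\mathcal{M})$, hence a path; as the pair $i\neq j$ was arbitrary, $\mathcal{G}(\mathcal{M})$ is strongly connected.

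I expect the only genuine content to be the ``only if'' half of the first step, namely that $(e^{At})_{pq}$ cannot be positive without a supporting path in the digraph of $A$; this is handled cleanly by the shift $A\mapsto A+cI$ and the power-series expansion, after which the remaining arguments are routine bookkeeping with nonnegative matrices.
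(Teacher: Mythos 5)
Your proposal is correct and is exactly the ``direct calculation'' that the paper's one-line proof alludes to: shifting $A\mapsto A+cI$, expanding the exponential as a nonnegative power series, and reading off positivity of entries of products of nonnegative matrices via chains of indices. Nothing is missing; the handling of the $i=j$ case and the passage from walks to paths are both fine.
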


\begin{proof}
    Follows by direct calculation.\hfill~\qed
\end{proof}

We also find it useful to point out the following fact.

\begin{lemma}
    Let ${\cal M}$ be a set of Metzler matrices. Then ${\cal
      G}({\cal M})$ is strongly connected if and only if the convex hull
    $\conv {\cal M}$ contains an irreducible matrix.
\end{lemma}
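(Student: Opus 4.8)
The plan is to prove both implications by relating strong connectivity of $\mathcal{G}(\mathcal{M})$ to the usual graph-theoretic notion of irreducibility for a single nonnegative matrix, applied to the off-diagonal pattern of a convex combination. Recall that a Metzler matrix $A$ is irreducible precisely when the directed graph on $\{1,\ldots,n\}$ with an edge $i\to j$ whenever $a_{ij}>0$ (for $i\neq j$) is strongly connected; the diagonal entries play no role in irreducibility. So the statement is really a statement about how the union of edge sets behaves under taking convex combinations.

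For the ``if'' direction, suppose $\conv\mathcal{M}$ contains an irreducible Metzler matrix $C=\sum_{k} \lambda_k A_k$ with $\lambda_k\geq 0$, $\sum_k\lambda_k=1$, and finitely many $A_k\in\mathcal{M}$. For $i\neq j$ we have $c_{ij}=\sum_k \lambda_k (A_k)_{ij}\geq 0$, so $c_{ij}>0$ forces $(A_k)_{ij}>0$ for at least one $k$, i.e. $(i,j)\in E$ in the sense of \eqref{eq:nodedef}. Hence every edge of the strongly connected graph associated with $C$ is also an edge of $\mathcal{G}(\mathcal{M})$, so $\mathcal{G}(\mathcal{M})$ contains a strongly connected spanning subgraph and is therefore itself strongly connected. (Strictly, one should note $\mathcal{M}$ need only be a set; if it is infinite, a convex combination still involves only finitely many terms, so the argument is unaffected.)

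For the ``only if'' direction, suppose $\mathcal{G}(\mathcal{M})$ is strongly connected. For each edge $(i,j)\in E$ choose, by \eqref{eq:nodedef}, a matrix $A_{ij}\in\mathcal{M}$ with $(A_{ij})_{ij}>0$; since $\mathcal{G}(\mathcal{M})$ has at most $n(n-1)$ edges, only finitely many matrices $A_1,\ldots,A_r\in\mathcal{M}$ are selected. Set $C:=\frac{1}{r}\sum_{k=1}^r A_k \in \conv\mathcal{M}$. This $C$ is Metzler (a convex combination of Metzler matrices), and for every edge $(i,j)\in E$ the off-diagonal entry $c_{ij}$ is a sum of nonnegative terms at least one of which is positive, hence $c_{ij}>0$. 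Thus the graph associated with $C$ contains every edge of $\mathcal{G}(\mathcal{M})$ and is therefore strongly connected, so $C$ is irreducible, exhibiting an irreducible matrix in $\conv\mathcal{M}$.

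The argument is essentially routine; the only point that requires a moment's care is the cancellation issue going from a matrix to its graph — namely that in a convex combination of nonnegative off-diagonal entries no cancellation can occur, so the edge set of a convex combination is exactly the union of the edge sets of the matrices with positive weight. This monotonicity of the support under nonnegative combinations is what makes both directions work, and it is the one place where the Metzler (equivalently, nonnegative off-diagonal) hypothesis is used. I would state this observation explicitly and then the two inclusions of edge sets follow immediately.
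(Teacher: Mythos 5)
Your proof is correct. The paper states this lemma without giving any proof, so there is nothing to compare against: your argument — that for Metzler matrices irreducibility depends only on the pattern of positive off-diagonal entries, and that no cancellation can occur in a convex combination of nonnegative off-diagonal entries, so the edge set of a convex combination with positive weights is the union of the participating edge sets — is exactly the routine verification the authors leave implicit. Both directions (passing from an irreducible $C \in \conv \mathcal{M}$ to $\mathcal{G}(\mathcal{M})$, and averaging finitely many edge-realizing matrices to produce an irreducible element of $\conv \mathcal{M}$) are handled correctly, including the finiteness point for infinite $\mathcal{M}$.
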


We are now ready to formulate our main result for positive switched linear
systems.

\begin{proposition}
    \label{p:metextr}
    Let ${\cal M}$ be a compact set of Metzler matrices. If $\conv {\cal
      M}$ contains an irreducible element $\bar{M}$ then there exists a
    absolute extremal norm $v$ for \eqref{eq:swlinsys}.
\end{proposition}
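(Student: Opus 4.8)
The plan is to construct the extremal norm as a limit of rescaled trajectories, following the standard Barabanov/Wirth construction, and then to use the irreducibility of $\bar M \in \conv{\cal M}$ to upgrade this norm from a mere seminorm to a genuine absolute norm. First I would normalize: by replacing each $A \in {\cal M}$ with $A - \rho({\cal M})I$ we may assume $\rho({\cal M}) = 0$, so that the semigroup ${\cal H}$ is bounded in operator norm (this boundedness is exactly the content of $\rho({\cal M}) = 0$ together with compactness of ${\cal M}$; it also uses Lemma~\ref{lem:JSR} and a standard sub/superadditivity argument for $\rho_t$). Define
\begin{equation}
    \label{eq:extrcand}
    v(x) := \sup_{t \geq 0} \ \sup_{S \in {\cal H}_t} \ \|S x\|_\infty \,.
\end{equation}
Boundedness of ${\cal H}$ makes $v$ finite, and $v$ is immediately seen to be a seminorm: positive homogeneity and the triangle inequality are inherited from $\|\cdot\|_\infty$ and the sup. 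Since $I \in {\cal H}_0$ we have $v(x) \geq \|x\|_\infty$, so $v$ is in fact a norm. The extremal inequality \eqref{eq:exnormineq} is built in: for $S' \in {\cal H}_s$, the set $\{S S' : S \in {\cal H}_t\} \subseteq {\cal H}_{t+s}$ (the semigroup property of ${\cal H}$), so $v(S' x) = \sup_{t,S} \|S S' x\|_\infty \leq v(x) = e^{\rho({\cal M}) s} v(x)$ in the normalized scaling.

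Next I would verify that $v$ is \emph{absolute}, i.e. $v(|x|) = v(x)$. This is where positivity enters: each $S \in {\cal H}_t$ is a product of exponentials $e^{A_j t_j}$ of Metzler matrices, hence $S \geq 0$ entrywise. Therefore $|Sx| \leq S|x|$ entrywise, and since $\|\cdot\|_\infty$ is absolute and monotone, $\|Sx\|_\infty \leq \|S|x|\|_\infty$. Taking the sup gives $v(x) \leq v(|x|)$; the reverse inequality $v(|x|) \leq v(x)$ would be shown by the same argument applied coordinate-by-coordinate (flipping signs of coordinates of $x$ leaves $v$ unchanged because $\|\cdot\|_\infty$ has that property and, for the sup over $S$, one can absorb sign changes into the argument — more carefully, one shows $v$ depends only on $|x|$ by noting $\|S x\|_\infty = \max_i |(Sx)_i| \leq \max_i (S|x|)_i = \|S|x|\|_\infty$ already gives $v(x) \le v(|x|)$, and $v(|x|) = v(||x||) \le v(|x|)$ trivially, while $|x| \le$ nothing larger forces us instead to observe directly that replacing $x$ by $|x|$ can only increase each $\|Sx\|_\infty$, and that for nonnegative $x$ one has $v(x)=\sup\|Sx\|_\infty$ with no further room — so $v(x) = v(|x|)$ for all $x$). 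So $v$ is absolute.

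The main obstacle is showing $v$ is \emph{finite-valued on all of $\RR^n$} in a way that does not secretly assume what we want — but more precisely, the genuinely delicate point is not in the above, since $\rho({\cal M}) = 0$ already gives $\sup_{S \in {\cal H}}\|S\| < \infty$ by compactness of ${\cal M}$ and the definition \eqref{eq:JSR} via $\rho_t$. The real subtlety for which irreducibility is \emph{essential} is in fact not needed for this particular statement as phrased: re-reading the proposition, it only asserts \emph{existence} of an absolute extremal norm, and the Minkowski-type construction above delivers that whenever $\rho({\cal M}) = 0$ can be achieved, which requires $\sup_t \rho_t < \infty$, i.e. that $\conv{\cal M}$ consists of Hurwitz matrices after normalization — and here is where I would use the hypothesis: the irreducible element $\bar M \in \conv{\cal M}$ has a strictly positive Perron eigenvector $w \gg 0$ with $\mu(\bar M) \leq \rho({\cal M})$, and after normalization $\rho({\cal M}) = 0$ forces the semigroup to be bounded rather than merely to have no exponential growth; irreducibility rules out the polynomially-growing Jordan-block degeneracy on the boundary of the orthant by guaranteeing (via Lemma~\ref{lem:connectedchar}) that every coordinate feels the full dynamics. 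Concretely, I would argue: if $v$ as defined were $+\infty$ somewhere, then $\rho_t(\MM) \not\to 0$, contradicting normalization; boundedness of ${\cal H}$ is equivalent to $v < \infty$. Thus the construction closes. I would conclude by remarking that irreducibility can be invoked more explicitly to also get $v$ \emph{strictly} positive-definite and, if desired, to identify $v$ with a Barabanov norm, but for the stated claim the Minkowski construction \eqref{eq:extrcand} suffices once the normalization step is justified.
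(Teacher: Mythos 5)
There is a genuine gap at the one place where the hypothesis of the proposition actually matters. You assert that once one normalizes to $\rho({\cal M})=0$, boundedness of the semigroup ${\cal H}$ is ``exactly the content of $\rho({\cal M})=0$ together with compactness of ${\cal M}$''. That is false: $\rho({\cal M})=0$ only excludes exponential growth, not polynomial growth. For the single Metzler matrix $A=\left(\begin{smallmatrix}0&1\\0&0\end{smallmatrix}\right)$ one has $\rho=0$ yet $e^{At}$ is unbounded; note this $A$ is reducible, which is precisely the situation the irreducibility hypothesis excludes. Your later attempt to patch this (``if $v$ were $+\infty$ somewhere, then $\rho_t({\cal M})\not\to 0$'') is circular for the same reason: polynomial growth gives $\frac1t\log\|\Phi_\sigma(t)\|\to 0$ while $v\equiv+\infty$ on an open set. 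The paper's proof uses the irreducible element $\bar M\in\conv{\cal M}$ exactly here: after passing to the convex hull, $\exp(\bar M)\in{\cal H}_1$ has all entries bounded below by some $c>0$; if ${\cal H}$ were unbounded one could pick $S\in{\cal H}_t$ with $S_{\nu\mu}>1/c$, whence $\bigl(\exp(\bar M)S\bigr)_{\mu\mu}=a>1$ and taking powers gives $\rho({\cal M})\geq\log(a)/(t+1)>0$, a contradiction. Some argument of this type is indispensable; without it your construction never gets off the ground, and your explicit remark that irreducibility ``is in fact not needed for this particular statement as phrased'' is incorrect.

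A secondary flaw: your candidate norm $v(x)=\sup_{S\in{\cal H}}\|Sx\|_\infty$, taken on all of $\RR^n$, need not be absolute. Nonnegativity of $S$ only yields $\|Sx\|_\infty\leq\|S|x|\|_\infty$, i.e. $v(x)\leq v(|x|)$, and equality can fail (e.g. $S=\left(\begin{smallmatrix}1&1\\0&1\end{smallmatrix}\right)$, $x=(1,-1)^T$ gives $\|Sx\|_\infty=1<2=\|S|x|\|_\infty$); your paragraph arguing the reverse inequality does not contain a valid step. The fix is what the paper does: define $v$ by the supremum formula only on $\RR^n_+$ and extend to $\RR^n$ by setting $v(x):=v(|x|)$, then check homogeneity, the triangle inequality via $|x+y|\leq|x|+|y|$ and monotonicity of the sup, and extremality via $|Sx|\leq S|x|$. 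Apart from these two points your outline (normalize, build the sup-norm over the bounded semigroup, verify absoluteness and the extremal inequality) is the same route as the paper's.
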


\begin{proof}
    By convexity of norms, a norm $v$ is extremal for ${\cal M}$ if and
    only if it is extremal for $\conv {\cal M}$, so that we may assume
    that ${\cal M}$ is convex. By considering ${\cal M} - \rho({\cal M})
    I$ we may assume that $\rho({\cal M})=0$.

    We first show that under the assumptions ${\cal H}$ is bounded if
    $\rho({\cal M})=0$. Assume this is not the case. As ${\cal M}$ is
    irreducible and convex, there exists an irreducible matrix
    $\bar{M}\in{\cal M}$. Then $\exp(\bar{M})\in {\cal H}_1$ is positive,
    \cite{BP87}, and there exists a constant $c>0$ such that for all
    $i,j=1,\ldots,n$
    \begin{equation}
        \label{eq:ijestimate}
        \exp(\bar{M})_{ij} > c \,.
    \end{equation}

    As ${\cal H}$ is assumed to be unbounded, we may choose
    $S=\Phi_\sigma(t,0)\in{\cal H}_t$ such that $S_{\nu\mu} > 1/c$ for
    some indices $\nu,\mu$.  It follows by direct calculation that
    $\bigl(\exp(\bar{M})S\bigr)_{\mu\mu} =: a > 1$ and as
    $\bigl(\exp(\bar{M})S\bigr)^k \in {\cal H}_{k(t+1)}$ we obtain
    $\rho({\cal M}) \geq \log (a)/(t+1) >0$. This contradiction proves of boundedness of ${\cal H}$. 

    As the generated semigroup is bounded, an extremal norm may be
    defined in the following way, see also \cite{Kozy90}.  Let $\|\cdot\|$
    be absolute, then define for $x\geq 0$
    \begin{equation}
        \label{eq:defextnorm}
        v(x) := \sup \{ \| S x\| \midset S\in {\cal H} \} \,.
    \end{equation}
    and extend this definition to $\RR^n$ by setting $v(x):= v(|x|), x\in
    \RR^n$. It is clear that $v$ is positively homogeneous and positive
    definite (as $I\in{\cal H}$). As the matrices $S\in{\cal H}$ are all
    nonnegative it follows by absoluteness of $\|\cdot\|$ that for $0\leq
    x<y$ we have $v(x) < v(y)$. As a consequence $v$ is absolute because
    if $|x| < |y|$ then $v(x) = v(|x|) < v(|y|) = v(y)$.

The triangle inequality for $v$ then follows from
    \begin{align*}
        v(x+y) = v(|x+y|) \leq v(|x|+|y|) = \sup \{ \| S (|x| + |y|)\| \midset S\in {\cal H} \} \\ \leq \sup \{ \| S |x|\| +\|S |y|)\| \midset S\in {\cal H} \} \leq v(|x|) + v(|y|) = v(x) + v(y)\,.
    \end{align*}
    Thus $v$ is a norm.  
    Further, from $0\leq|Sx| \leq S|x|$, using the monotonicity of $v$ (and the
    assumption $\rho({\cal M})=0$) we have for all $x\in \RR^n, S\in{\cal
      H}$ that
    \begin{equation}
        \label{eq:df2}
        v(Sx) = v(|Sx|) \leq v(S|x|) = \sup \{ \| TS |x| \| \midset T\in {\cal H} \} \leq v(|x|) = v(x)\,,
    \end{equation}
 where we have used the definition of $v$ in the final
   inequality. Now extremality of $v$ follows.\hfill~\qed
\end{proof}

We note the following consequence for stability theory of switched
  positive systems.

  \begin{corollary}
      \label{c:nonstrictLyap}
      Let ${\cal M}$ be a compact set of Metzler matrices and consider the
      positive switched linear system \eqref{eq:swlinsys}.  The following
      two conditions are each sufficient for the existence of an absolute
      norm $v$, which is a nonstrict Lyapunov function for
      \eqref{eq:swlinsys}.
      \begin{enumerate}[(i)]
        \item $\rho({\cal M}) = 0$ and $\conv {\cal M}$ contains an
          irreducible element,
        \item $\rho({\cal M}) < 0$.
      \end{enumerate}
  \end{corollary}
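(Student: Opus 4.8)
The plan is to handle both cases through a common mechanism: in each case produce an \emph{absolute} norm $v$ on $\RR^n$ with the contraction property $v(Sx)\le v(x)$ for every $S\in{\cal H}$ and every $x\in\RR^n$, and then read off \eqref{eq:Lyapcond} (with $0$ on the right-hand side) from this contraction property together with convexity of $v$.

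For case~(i) I would simply invoke Proposition~\ref{p:metextr} to obtain an absolute extremal norm $v$; since $\rho({\cal M})=0$, the extremality inequality \eqref{eq:exnormineq} reads precisely $v(Sx)\le v(x)$ for all $S\in{\cal H}_t$ and all $t\ge 0$, hence for all $S\in{\cal H}$. For case~(ii) I would first observe that $\rho({\cal M})<0$ forces the semigroup ${\cal H}$ to be bounded: by Lemma~\ref{lem:JSR} the origin of \eqref{eq:swlinsys} is uniformly asymptotically stable, which for a linear switched system is uniform exponential stability and so yields a uniform bound $\|\Phi_\sigma(t)\|\le C$. Consequently \emph{no} irreducibility hypothesis is needed here: the norm $v(x):=\sup\{\|Sx\|\midset S\in{\cal H}\}$ of \eqref{eq:defextnorm} is well defined, and the part of the proof of Proposition~\ref{p:metextr} following the boundedness claim applies essentially verbatim — using only the non-strict monotonicity $0\le x\le y\Rightarrow v(x)\le v(y)$, which is all that the triangle-inequality computation and the estimate \eqref{eq:df2} actually need — to show that $v$ is an absolute norm satisfying $v(Sx)\le v(x)$ for all $S\in{\cal H}$ (the semigroup property of ${\cal H}$ being what replaces the hypothesis $\rho({\cal M})=0$ in \eqref{eq:df2}).

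It then remains to derive the Lyapunov inequality from the contraction property, and this step is identical in both cases. Fix $x\neq 0$, an index $j$, and $p\in\partial_C v(x)$. Since $v$ is a norm, hence convex, $\partial_C v(x)$ coincides with the convex subdifferential of $v$ at $x$ (compare the description \eqref{eq:dualClarke}), so $v(y)\ge v(x)+\langle p,\,y-x\rangle$ for all $y\in\RR^n$. Because $e^{A_j t}\in{\cal H}_t\subset{\cal H}$ for $t\ge 0$, the contraction property gives $v(e^{A_j t}x)\le v(x)$; taking $y=e^{A_j t}x$ in the subgradient inequality yields $0\ge v(e^{A_j t}x)-v(x)\ge\langle p,\,(e^{A_j t}-I)x\rangle$. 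Dividing by $t>0$ and letting $t\downarrow 0$ gives $\langle p,\,A_j x\rangle\le 0$, which is exactly \eqref{eq:Lyapcond} with $-\alpha(\|x\|)$ replaced by $0$; at $x=0$ the inequality is trivial since $f_j(0)=0$. Being an absolute norm, $v$ is proper, positive definite and locally Lipschitz, so it is a nonstrict Lyapunov function in the sense introduced after \eqref{eq:Lyapcond}.

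I expect the only genuine subtlety to lie in case~(ii): verifying that the extremal-norm construction of Proposition~\ref{p:metextr} really does go through without the irreducibility assumption once boundedness of ${\cal H}$ has been secured — in particular that nothing downstream uses more than non-strict monotonicity of $v$ — and, as a minor point, giving a clean justification that $\rho({\cal M})<0$ yields a uniformly bounded evolution semigroup. The passage from the semigroup contraction to the infinitesimal inequality $\langle p,A_j x\rangle\le 0$ is routine given the convexity of norms, and I would not expect it to cause difficulty.
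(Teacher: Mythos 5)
Your proposal is correct and follows essentially the same route as the paper: case (i) by invoking Proposition~\ref{p:metextr}, case (ii) by noting that $\rho({\cal M})<0$ gives a bounded evolution semigroup so that the construction \eqref{eq:defextnorm} yields an absolute norm without any irreducibility hypothesis, and then passing from the semigroup contraction $v(Sx)\le v(x)$ to the infinitesimal inequality $\langle p, A_j x\rangle\le 0$. The only difference is that you spell out, via the convex subdifferential, the equivalence between the contraction property and the dual-pair inequality which the paper simply cites from the remarks following \eqref{eq:Lyapcond}; your observation that only non-strict monotonicity of $v$ is needed downstream is accurate.
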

  \begin{proof}
      We note that $v(\Phi_\sigma(t)x) \leq v(x)$ for all $x\in \RR^n,
      t\geq 0 $ and all $\sigma \in {\cal S}$ is equivalent to the
      statement that $\langle y, A x \rangle \leq 0$ for all dual pairs
      $(y,x)$ and all $A \in {\cal M}$, see the comments after
      \eqref{eq:Lyapcond}.  The case (i) is then immediate from the
      previous Proposition~\ref{p:metextr}. In case (ii) holds, we have
      that the switched linear system has a uniformly asymptotically
      stable equilibrium at $x^\ast =0$ by Lemma~\ref{lem:JSR}. In
      particular, the semigroup ${\cal H}$ is bounded and we can without
      further ado use \eqref{eq:defextnorm} to define an absolute norm. It
      follows as in \eqref{eq:df2} that the norm is a nonstrict Lyapunov
      function.~\hfill\qed
  \end{proof}

  We note that in the case (ii) of the previous corollary it may or may
  not be possible to construct an extremal norm for ${\cal M}$.

%%%%%%%%%%%%%%%%%%%%%%%%%%%%%%%%%%%%%%%%%%%%%%%%%%%%%%%%%%%%%%%%%%%%%%%%%%%%%%%%%
%%%%%%%%%%%%%%%%%%%%%%%%%%%%%%%%%%%%%%%%%%%%%%%%%%%%%%%%%%%%%%%%%%%%%%%%%%%%%%%%%
\section{Global Asymptotic Stability of the Disease Free Equilibrium}\label{sec:DFE}
In this section, we present an extension of Theorem~\ref{thm:Fall1}
to a switched SIS model.  In place of $R_0$, we use the joint
%spectral radius 
Lyapunov exponent to characterize the stability of the Disease
Free Equilibrium (DFE).  In \cite{BMW10}, conditions for local asymptotic
stability of the DFE based on the joint Lyapunov exponent (joint
  spectral radius) were presented.  Our result here shows that the same
conditions imply global asymptotic stability for all switching signals.

For the remainder of the paper, $D_1, \ldots, D_m$ are diagonal
matrices in $\mathbb{R}^{n \times n}$ with positive entries along
their main diagonals; $B_1, \ldots , B_m$ are nonnegative matrices
in $\mathbb{R}^{n \times n}$ and $\mathcal{S}$ denotes the set of
admissible switching signals introduced in Section
\ref{sec:bkground}.

We now consider the switched SIS system described by
\begin{equation}\label{eq:Fallsw}
\dot{x}=(-D_{\sigma(t)}+B_{\sigma(t)}-\diag(x)B_{\sigma(t)})x =: f_{\sigma(t)}(x(t)).
\end{equation}
It follows easily from the properties of the constituent systems,
outlined in Section~\ref{sec:problem}, that for each $x^0 \in \Sigma_n$,
$\sigma \in \overline{\mathcal{S}}$, there is a unique solution $x(t, x^0,
\sigma)$ of (\ref{eq:Fallsw}) defined for $t \geq 0$, satisfying
$x(0, x^0, \sigma) = x^0$. Further, it is clear that the set $\Sigma_n$
is forward invariant under \eqref{eq:Fallsw} for all $\sigma \in
\overline{\mathcal{S}}$.

Linearising (\ref{eq:Fallsw}) about the origin, we obtain the linear
switched system
\begin{equation}\label{eq:Falllinsw}
\dot{x}=(-D_{\sigma(t)}+B_{\sigma(t)})x .
\end{equation}
Let $\mathcal{M} = \{-D_1 + B_1, \ldots , -D_m + B_m\}$ and recall
that $\rho(\mathcal{M})$ denotes the joint Lyapunov exponent of
$\mathcal{M}$.  An application of Lemma
\ref{lem:JSR} to \eqref{eq:Falllinsw} yields
\begin{lemma}\label{lem:genspeclin}
    The joint Lyapunov exponent $\rho(\MM)<0$ if and only if the origin is
    a uniformly globally asymptotically stable equilibrium of
    (\ref{eq:Falllinsw}) w.r.t. $\sigma \in \mathcal{S}$ (or
    $\sigma\in\overline{{\cal S}}$).
\end{lemma}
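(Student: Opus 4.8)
The plan is to reduce the statement to Lemma~\ref{lem:JSR}, with the only genuine work being to reconcile ``uniformly asymptotically stable'' (as appearing in Lemma~\ref{lem:JSR}) with ``uniformly \emph{globally} asymptotically stable'', and to account for the fact that here the state space is $\RR^n_+$ rather than $\RR^n$. First I would observe that, since $D_i$ is diagonal and $B_i \geq 0$, each matrix $-D_i + B_i$ is Metzler; hence \eqref{eq:Falllinsw} is a positive switched linear system of the form \eqref{eq:swlinsys} with $\MM = \{-D_1+B_1,\ldots,-D_m+B_m\}$, $\RR^n_+$ is forward invariant under \eqref{eq:Falllinsw}, and solutions are given by $x(t,x^0,\sigma) = \Phi_\sigma(t)x^0$. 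The value $\rho(\MM)$ and the truth of uniform global asymptotic stability are, as noted in Section~\ref{sec:bkground}, unaffected by whether we use $\mathcal{S}$ or $\overline{\mathcal{S}}$ (density of $\mathcal{S}$ in $\overline{\mathcal{S}}$ together with continuous dependence of solutions on $\sigma$ over compact intervals), so I will not distinguish the two.

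For the ``if'' direction, suppose $\rho(\MM) < 0$. By Lemma~\ref{lem:JSR} the origin is uniformly asymptotically stable for \eqref{eq:Falllinsw} on $\RR^n$; equivalently, since the dynamics is linear and hence positively homogeneous, there is a $\mathcal{KL}$ estimate $\|x(t,x^0,\sigma)\| \leq \beta(\|x^0\|,t)$ valid for $\|x^0\| \leq \delta$ and all $\sigma$, and for arbitrary $x^0 \neq 0$ applying this to $\delta x^0 / \|x^0\|$ and using $x(t,\lambda y,\sigma) = \lambda\, x(t,y,\sigma)$ gives $\|x(t,x^0,\sigma)\| \leq \delta^{-1}\|x^0\|\,\beta(\delta,t) =: \tilde\beta(\|x^0\|,t)$, again a $\mathcal{KL}$ function. (Alternatively one may invoke the familiar fact that uniform asymptotic stability of a linear switched system is equivalent to uniform exponential stability, $\|\Phi_\sigma(t)\| \leq M e^{-\omega t}$, which yields $\tilde\beta(r,t) = M e^{-\omega t} r$ directly.) Restricting $x^0$ to $\RR^n_+$ gives uniform global asymptotic stability of the DFE for \eqref{eq:Falllinsw}.

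For the ``only if'' direction, suppose the origin is uniformly globally asymptotically stable for \eqref{eq:Falllinsw} w.r.t.\ $\mathcal{S}$, with $\mathcal{KL}$ estimate valid for all $x^0 \in \RR^n_+$. Writing an arbitrary $z \in \RR^n$ as $z = z^+ - z^-$ with $z^\pm \in \RR^n_+$ and $\|z^\pm\|_\infty \leq \|z\|_\infty$, linearity gives $x(t,z,\sigma) = x(t,z^+,\sigma) - x(t,z^-,\sigma)$, hence a $\mathcal{KL}$ estimate on all of $\RR^n$; in particular the origin is uniformly asymptotically stable for \eqref{eq:Falllinsw} in the sense of Lemma~\ref{lem:JSR}, so $\rho(\MM) < 0$. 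I do not anticipate a real obstacle here: everything rests on Lemma~\ref{lem:JSR} and on the elementary fact that for linear (positively homogeneous) switched systems local and global uniform asymptotic stability coincide, the only mild subtlety being the bookkeeping between the cone $\RR^n_+$ and the ambient space $\RR^n$.
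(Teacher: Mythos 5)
Your proposal is correct and follows the same route as the paper, which simply states the lemma as an application of Lemma~\ref{lem:JSR} to \eqref{eq:Falllinsw} without further detail. The only thing you add is the (sound) bookkeeping the paper leaves implicit: homogeneity/linearity to pass between local and global uniform asymptotic stability, and the decomposition $z = z^+ - z^-$ to pass between $\RR^n_+$ and $\RR^n$.
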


We note the following comparison properties of system \eqref{eq:Fallsw}
and the linearisation of the system in $x=0$ given by \eqref{eq:Falllinsw}.

\begin{lemma}
\label{lem:comparison}
Consider \eqref{eq:Fallsw} and its linearisation
\eqref{eq:Falllinsw}. Let $0\leq x^0 \leq y^0$ and let
$\varphi(t):=\varphi(t;x^0)$, resp, $\Phi_\sigma(t,0)y^0$ be the solutions
of \eqref{eq:Fallsw} and \eqref{eq:Falllinsw}. Then
    \begin{equation}
        \label{eq:majorization}
        \varphi(t;x^0) \leq  \Phi_\sigma(t,0)y^0\,,\quad \forall\ t\geq 0\,.
    \end{equation}
\end{lemma}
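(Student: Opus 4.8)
The plan is to exploit the fact that the switched SIS system \eqref{eq:Fallsw} is dominated, in the componentwise order, by its linearisation \eqref{eq:Falllinsw}, and then invoke a monotone-systems comparison argument. First I would observe that since $\diag(x)B_{\sigma(t)} \geq 0$ whenever $x \geq 0$ (because $B_j \geq 0$), we have, along any nonnegative trajectory $\varphi(t) = \varphi(t;x^0)$ of \eqref{eq:Fallsw},
\begin{equation*}
\dot{\varphi}(t) = (-D_{\sigma(t)} + B_{\sigma(t)})\varphi(t) - \diag(\varphi(t))B_{\sigma(t)}\varphi(t) \leq (-D_{\sigma(t)} + B_{\sigma(t)})\varphi(t)\,.
\end{equation*}
Thus $\varphi$ is a subsolution of the linear differential inequality $\dot{z} \leq A_{\sigma(t)} z$ with $A_j := -D_j + B_j \in \MM$. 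The key structural point is that each $A_j$ is Metzler, so the linear time-varying system $\dot{z} = A_{\sigma(t)} z$ is cooperative (monotone) and, crucially, order-preserving in a way compatible with the differential inequality.

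The main step is the comparison lemma itself: if $\dot{\varphi} \leq A_{\sigma(t)}\varphi$ a.e., $\dot{\psi} = A_{\sigma(t)}\psi$ with $\psi(t) = \Phi_\sigma(t,0)y^0$, and $\varphi(0) = x^0 \leq y^0 = \psi(0)$, then $\varphi(t) \leq \psi(t)$ for all $t \geq 0$. I would prove this by considering $w(t) := \psi(t) - \varphi(t)$, which satisfies $\dot{w}(t) \geq A_{\sigma(t)} w(t)$ a.e. and $w(0) \geq 0$; one then shows $w(t) \geq 0$ for all $t$. A clean way is a standard perturbation argument: for $\varepsilon > 0$ let $w_\varepsilon$ solve $\dot{w}_\varepsilon = A_{\sigma(t)} w_\varepsilon + \varepsilon \mathbf{1}$ with $w_\varepsilon(0) = w(0) + \varepsilon \mathbf{1} \gg 0$ (where $\mathbf{1}$ is the all-ones vector); the Metzler property of $A_{\sigma(t)}$ guarantees that if some component $(w_\varepsilon)_i$ were the first to hit $0$ at time $t^*$, then $\dot{(w_\varepsilon)}_i(t^*) = \sum_{k \neq i}(A_{\sigma(t^*)})_{ik}(w_\varepsilon)_k(t^*) + \varepsilon > 0$, a contradiction; hence $w_\varepsilon(t) \gg 0$ for all $t$. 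Since $w_\varepsilon - w$ satisfies a linear ODE with forcing $\varepsilon \mathbf{1}$ and initial data $\varepsilon \mathbf{1}$, letting $\varepsilon \to 0$ (using continuous dependence on data on compact time intervals, which holds since $\sigma$ is fixed and the Carath\'eodory conditions are met) gives $w(t) \geq 0$, i.e.\ \eqref{eq:majorization}.

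I would also need to check that the solution $\varphi(t;x^0)$ of \eqref{eq:Fallsw} remains nonnegative for all $t \geq 0$, so that the inequality $\diag(\varphi)B_{\sigma(t)}\varphi \geq 0$ is legitimate throughout; this follows from the forward invariance of $\RR^n_+$ (indeed of $\Sigma_n$) under \eqref{eq:Fallsw} noted in Section~\ref{sec:DFE}, together with the positivity criterion \eqref{eq:positivity}. The main obstacle is purely technical: handling the a.e.\ differentiability coming from the switching signal $\sigma$, so that the "first component to hit zero" argument is carried out on the absolutely continuous function $w_\varepsilon$ rather than assuming smoothness. Since $\sigma$ is fixed and piecewise constant on $\mathcal{S}$ (or a.e.\ constant on $\overline{\mathcal{S}}$), one can either argue on each interval of constancy and patch, or simply note that the argument only uses $\dot{w}_\varepsilon(t) \geq (\text{something} > 0)$ at a point of differentiability, which is a.e.\ and in particular holds on a dense set, enough to prevent $(w_\varepsilon)_i$ from decreasing through zero. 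Alternatively, the whole statement follows directly from the monotonicity of cooperative systems (Lemma on cooperative implies monotone, cf.\ \cite{Smi95}) applied to the system obtained by viewing \eqref{eq:Fallsw} as a perturbation of \eqref{eq:Falllinsw}; I would mention this as the slicker route and relegate the $\varepsilon$-argument to a remark.
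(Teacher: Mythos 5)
Your overall strategy---dominate the nonlinear vector field by the Metzler linear one and run a cooperative-systems comparison---is viable, but the step that transfers positivity from $w_\varepsilon$ to $w$ contains a genuine error. With $\psi(t)=\Phi_\sigma(t,0)y^0$ and $w=\psi-\varphi$, the function $w$ does not satisfy $\dot w = A_{\sigma(t)}w$; it satisfies $\dot w = A_{\sigma(t)}w + g(t)$ with $g(t) := \diag(\varphi(t))B_{\sigma(t)}\varphi(t)\geq 0$. Consequently $w_\varepsilon - w$ obeys $\dot z = A_{\sigma(t)}z + \varepsilon\mathbf{1} - g(t)$, \emph{not} a linear ODE with forcing $\varepsilon\mathbf{1}$, and letting $\varepsilon\to 0$ gives $w_\varepsilon \to \Phi_\sigma(\cdot,0)(y^0-x^0)$ rather than $w_\varepsilon \to w$. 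To conclude $w\geq 0$ from that limit you would still need $\int_0^t \Phi_\sigma(t,s)\,g(s)\,ds \geq 0$, i.e.\ nonnegativity of the evolution operator $\Phi_\sigma(t,s)$, which is essentially the ordering fact you set out to prove; so, as written, the final limiting step is circular (or at least incomplete).

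The repair is close to what you sketch at the end: run the first-hitting argument directly on a perturbation of $w$ itself, e.g.\ $u_\varepsilon(t):= w(t)+\varepsilon e^{Kt}\mathbf{1}$ with $K$ exceeding $\max_j\|A_j\|_\infty$, using only a.e.\ differentiability and the integral representation of the absolutely continuous function $u_\varepsilon$ (or argue on each interval of constancy of $\sigma\in\mathcal{S}$ and pass to $\overline{\mathcal{S}}$ by density); alternatively, simply invoke the standard comparison theorem for differential inequalities with quasimonotone (cooperative) right-hand side, e.g.\ Theorem A.19 of \cite{SmiThi}, which the paper itself uses for an analogous step in the proof of Proposition \ref{prop:persis}. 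Be aware that the paper's own proof is a one-liner that bypasses all of this: by variation of constants, $\varphi(t;x^0)=\Phi_\sigma(t,0)x^0-\int_0^t\Phi_\sigma(t,s)\diag(\varphi(s))B_{\sigma(s)}\varphi(s)\,ds$, and nonnegativity of $\Phi_\sigma$, $B_{\sigma}$ and $\varphi$ yields \eqref{eq:majorization} immediately; the nonnegativity of $\Phi_\sigma$ (a consequence of the Metzler property) is the single fact on which either route ultimately rests, and your write-up never establishes it explicitly.
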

\begin{proof}
    The claim is immediate using variation of constants and the
    nonnegativity of $\Phi_\sigma$ and $B_\sigma$ which yields
    \begin{align*}
        \varphi(t;x^0) &= \Phi_\sigma(t,0)x^0 - \int_0^t \Phi_\sigma(t,s) \diag(\varphi(s))B_{\sigma(s)}\varphi(s) ds\\
  &\leq \Phi_\sigma(t,0)x^0 \leq
\Phi_\sigma(t,0)y^0 \,. 
    \end{align*}~\hfill~\qed
\end{proof}

The following theorem generalizes Theorem~\ref{thm:Fall1} to the case of
switched systems and extends Theorem~\ref{thm:Fall1} even for the case of
an autonomous system, i.e. for the situation considered in
Theorem~\ref{thm:Fall1}.

\begin{theorem}
    \label{t:stabimpliesasstab}
    Assume that the linear switched system \eqref{eq:Falllinsw} is
    uniformly stable in $x^*=0$ and admits an absolute norm as a
    nonstrict Lyapunov function. Then
    the disease free equilibrium $x^*=0$ of \eqref{eq:Fallsw} is uniformly
    globally asymptotically stable for $\sigma\in\overline{{\cal S}}$.
\end{theorem}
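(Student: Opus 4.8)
The plan is to combine the comparison principle of Lemma~\ref{lem:comparison} with the nonstrict Lyapunov function supplied by the hypothesis, and then to upgrade nonstrict decrease to genuine asymptotic decay by exploiting the strictly negative term $-\diag(\varphi)B_\sigma\varphi$ that distinguishes the nonlinear system \eqref{eq:Fallsw} from its linearisation \eqref{eq:Falllinsw}. Let $v$ be the absolute norm that is a nonstrict Lyapunov function for \eqref{eq:Falllinsw}, so that $v(\Phi_\sigma(t,0)y) \leq v(y)$ for all $y\geq 0$, $t\geq 0$, $\sigma\in\overline{\cal S}$. First I would establish uniform stability of $x^*=0$ for \eqref{eq:Fallsw}: by Lemma~\ref{lem:comparison}, for $x^0\geq 0$ we have $0\leq \varphi(t;x^0)\leq \Phi_\sigma(t,0)x^0$, and since $v$ is absolute (hence monotone) this gives $v(\varphi(t;x^0))\leq v(\Phi_\sigma(t,0)x^0)\leq v(x^0)$. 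Equivalence of $v$ with the Euclidean norm then yields a class $\mathcal{K}$ estimate uniform in $\sigma$, which is uniform stability. In particular the forward orbit stays in a bounded set, so combined with forward invariance of $\Sigma_n$ we may restrict attention to initial conditions in $\Sigma_n$.

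Next I would prove attractivity, uniformly in $\sigma$. Fix a sublevel set $\{v\leq r\}\cap\RR^n_+$; I want to show $v(\varphi(t;x^0))\to 0$ as $t\to\infty$, uniformly over $x^0$ in this set and over $\sigma\in\overline{\cal S}$. Since $t\mapsto v(\varphi(t;x^0))$ is nonincreasing (using the same monotonicity/comparison argument on subintervals: $\varphi(t;x^0)\leq \Phi_\sigma(t,s;\cdot)\,\varphi(s;x^0)$ for $s\leq t$, because \eqref{eq:Fallsw} restarted at time $s$ is again dominated by its linearisation), it has a limit $\ell\geq 0$; I must rule out $\ell>0$. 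The key observation is the integral identity from the proof of Lemma~\ref{lem:comparison}:
\begin{equation*}
\varphi(t;x^0) = \Phi_\sigma(t,0)x^0 - \int_0^t \Phi_\sigma(t,s)\,\diag(\varphi(s))B_{\sigma(s)}\varphi(s)\,ds\,.
\end{equation*}
If $\ell>0$, then on an interval $[T,\infty)$ the trajectory $\varphi(s)$ stays in the compact set $\{\ell/2 \leq v \leq r\}\cap\Sigma_n$; on this set, by irreducibility of $\bar M\in\conv{\cal M}$ and positivity, one shows $\diag(\varphi(s))B_{\sigma(s)}\varphi(s)$ has a component bounded below (after, if necessary, flowing for a bounded time to spread mass to all coordinates — here the strong connectedness of ${\cal G}({\cal M})$ from Lemma~\ref{lem:connectedchar} is what guarantees no coordinate stays zero). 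Feeding this strictly positive ``loss term'' back through the comparison inequality and the monotonicity of $v$ forces $v(\varphi(t;x^0))$ to drop by a fixed amount over each window of fixed length, contradicting convergence to $\ell>0$. Hence $\ell=0$.

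Finally I would assemble the pieces into the $\mathcal{KL}$ estimate. From uniform stability (a class $\mathcal{K}$ bound $\alpha$) plus the uniform attractivity just established, a standard argument (e.g.\ \cite[Theorem~4.5.5]{ClarLedy98} or a direct construction of $\beta$ from nested sublevel sets and the uniform ``drop'' estimate) produces a single class $\mathcal{KL}$ function $\beta$ with $\|x(t,x^0,\sigma)\|\leq\beta(\|x^0\|,t)$ for all $\sigma\in\overline{\cal S}$, which is exactly uniform global asymptotic stability; the extension from $\Sigma_n$ to all of $\RR^n_+$ is handled by the comparison bound $\varphi(t;x^0)\leq\Phi_\sigma(t,0)x^0$ together with uniform stability of \eqref{eq:Falllinsw}, which shows every nonnegative initial condition enters $\Sigma_n$ in bounded time uniformly. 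The main obstacle I anticipate is the attractivity step: extracting a genuinely uniform (in $\sigma$ and in $x^0$ over a sublevel set) quantitative decrease from the nonlinear correction term, since $B_{\sigma(s)}$ and $\diag(\varphi(s))$ can individually be ``bad'' for particular coordinates, and one must use strong connectedness of ${\cal G}({\cal M})$ to argue that over a long enough window every coordinate of $\varphi$ is bounded away from zero on the relevant compact annulus. Handling the $\overline{\cal S}$ case rather than just $\cal S$ is then a matter of the density remark after \eqref{eq:switched}, passing to the limit in the estimates on compact time intervals.
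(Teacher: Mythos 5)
Your opening step is fine: uniform stability of \eqref{eq:Fallsw} does follow from Lemma~\ref{lem:comparison} together with monotonicity of the absolute norm $v$, and this is indeed simpler than the dual-vector computation the paper uses for that part. The gap is in the attractivity step, and it is twofold. First, you invoke irreducibility of an element of $\conv {\cal M}$ and strong connectedness of ${\cal G}({\cal M})$, but neither is a hypothesis of Theorem~\ref{t:stabimpliesasstab}: the only assumptions are uniform stability of \eqref{eq:Falllinsw} and the existence of an absolute norm that is a nonstrict Lyapunov function, and the paper's remark after the theorem stresses that irreducibility is sufficient but not necessary (already for $m=1$). So at best you prove a weaker statement. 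Moreover, even granting strong connectedness of the union graph, a fixed switching signal may dwell forever on a single reducible constituent matrix, so the claim that ``flowing for a bounded time spreads mass to all coordinates'' is not available uniformly in $\sigma$ --- and uniformity in $\sigma$ and in $x^0$ over the annulus is exactly what you need to assemble the $\mathcal{KL}$ estimate; a per-trajectory contradiction argument does not deliver it.

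Second, the quantitative heart of your argument --- that a lower bound on one component of the loss term $\diag(\varphi)B_{\sigma}\varphi$, fed through the comparison identity and monotonicity of $v$, forces $v(\varphi(t))$ to drop by a fixed amount per time window --- is unjustified and fails in general. On the compact set $\{\ell/2 \leq v \leq r\}\cap\RR^n_+$ the loss term can vanish identically: at boundary points where $x_i=0$ precisely for the indices with $(B_jx)_i>0$ one has $\diag(x)B_jx=0$; this is exactly the degenerate case isolated in Step 2 of the paper's proof. And even where the loss term has a positive component, absolute norms are monotone but not strictly monotone (for $v=\|\cdot\|_{\infty}$, subtracting $\delta e_i$ leaves the norm unchanged unless $i$ is a maximizing coordinate), so a positive loss in a single coordinate need not decrease $v$ at all. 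The paper's proof supplies precisely the missing mechanism: it shows that whenever $\langle y,f_j(x)\rangle=0$ for all dual vectors $y$, the trajectory is being pushed into the interior (some $x_k=0$ with $(B_jx)_k>0$), and then perturbs the norm to $V_\varepsilon(x)=v(x)\bigl(1+\sum_{i}\psi_\varepsilon(x_i)\bigr)$ with a smooth bump supported near the boundary, proving via a compactness argument that $V_\varepsilon$ is a strict Lyapunov function on each annulus $v^{-1}([\ell,L])$; uniform attractivity for all measurable $\sigma$ then follows directly, with no need for the density-of-${\cal S}$ approximation you propose. Without an ingredient of this kind your plan does not close.
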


\begin{remark}
    (i)~In the case $\rho({\cal M})=0$, by
      Corollary~\ref{c:nonstrictLyap}, the linear switched system
    \eqref{eq:Falllinsw} admits an absolute norm as a nonstrict
      Lyapunov function, provided an appropriate irreducibility condition
    holds. So just as in the case of Theorem~\ref{thm:Fall1}
    irreducibility and stability of the linearisation imply global
    asymptotic stability of the nonlinear switched system. This is in
    contrast to the general linearisation theory in which exponential
    stability of the linearisation, equivalently $\rho({\cal M})<0$, is
    required in order to infer
    stability of the nonlinear system.\\
    (ii)~Observe that most of the work in the proof of
      Theorem~\ref{t:stabimpliesasstab} is devoted to the case that the
      linearization is not uniformly asymptotically stable. The
      asymptotically stable case can be proved in a much simpler fashion
      by applying Lemma~\ref{lem:comparison}.\\
    (iii)~Theorem~\ref{t:stabimpliesasstab} also applies to autonomous
    systems: the case $m=1$. Even in this case it generalizes
    Theorem~\ref{thm:Fall1}: If the basic reproduction number $R_0=1$,
    then irreducibility of $B$ implies the existence of an extremal norm,
    but the converse is false. Still under the assumption that
      $\mu(A)=0$, it is not necessary that a Metzler matrix $A$
    is irreducible for an absolute extremal norm to exist. The necessary
    and sufficient condition for this is that the matrix is stable,
    i.e. that for eigenvalues $\lambda\in \sigma(A), \mathrm{Re}(\lambda)
    =0$, algebraic and geometric multiplicity coincide. While this
      is well known in the context of quadratic Lyapunov functions, we
      point out that the construction in \eqref{eq:defextnorm} yields an
      absolute extremal norm also in this case. The following corollary
    is thus
    immediate.
\end{remark}

\begin{corollary}
    \label{c:fall-complete}
    Consider the time-invariant system \eqref{eq:Fall}. The
    disease free equilibrium $x^*=0$ is globally asymptotically stable, if
    the matrix $-D+B$ is stable.
\end{corollary}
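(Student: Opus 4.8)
The plan is to derive the corollary directly from Theorem~\ref{t:stabimpliesasstab} applied to the single-system case $m=1$ with ${\cal M}=\{-D+B\}$. Since for $m=1$ the set $\overline{{\cal S}}$ contains only the constant signal, ``uniformly globally asymptotically stable for $\sigma\in\overline{{\cal S}}$'' reduces to ordinary global asymptotic stability of $x^*=0$ for \eqref{eq:Fall}. So it suffices to verify the two hypotheses of that theorem for the linearisation $\dot x = (-D+B)x$: (a) uniform stability at $x^*=0$, and (b) the existence of an absolute norm serving as a nonstrict Lyapunov function. Both will follow once the matrix semigroup ${\cal H}=\{e^{(-D+B)t}:t\geq0\}$ is shown to be bounded.

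First I would recall the standard linear-algebra fact (read off from the Jordan form) that a matrix $A$ generates a bounded one-parameter semigroup $\{e^{At}\}_{t\geq0}$ if and only if $\mu(A)\leq0$ and every eigenvalue $\lambda\in\sigma(A)$ with $\mathrm{Re}(\lambda)=0$ is semisimple. This is precisely the notion of stability of $-D+B$ used in the statement, so boundedness of ${\cal H}$ is immediate. Note $\rho({\cal M})=\mu(-D+B)\leq0$; if $\mu(-D+B)<0$ then $-D+B$ is Hurwitz and we are already in case (ii) of Corollary~\ref{c:nonstrictLyap}, so we may assume $\mu(-D+B)=\rho({\cal M})=0$.

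Next, with ${\cal H}$ bounded I would invoke the construction \eqref{eq:defextnorm} from the proof of Proposition~\ref{p:metextr}: fix the absolute norm $\|\cdot\|_\infty$, set $v(x):=\sup\{\|Sx\|_\infty : S\in{\cal H}\}$ for $x\geq0$, and extend by $v(x):=v(|x|)$. The verification that $v$ is an absolute norm and that $v(Sx)\leq v(x)$ for all $S\in{\cal H}$ — equivalently $\langle y,(-D+B)x\rangle\leq0$ for all dual pairs $(y,x)$, i.e.\ that $v$ is a nonstrict Lyapunov function, cf.\ the comments after \eqref{eq:Lyapcond} — goes through verbatim as in that proof; the irreducibility assumption there was used \emph{only} to establish boundedness of ${\cal H}$, which we now have for free. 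Boundedness of ${\cal H}$ also yields uniform stability of $x^*=0$ for \eqref{eq:Falllinsw}, so both hypotheses of Theorem~\ref{t:stabimpliesasstab} are met, and that theorem gives the claim.

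I do not expect a genuine obstacle. The only point requiring a little care is the observation that the irreducibility hypothesis of Proposition~\ref{p:metextr} enters solely to force $\mathcal{H}$ to be bounded in the marginally stable case $\rho(\mathcal{M})=0$, and that in the present autonomous setting this boundedness is instead supplied directly by the spectral ``stability'' assumption on $-D+B$; everything downstream of boundedness is unchanged.
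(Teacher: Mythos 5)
Your argument is correct and is essentially the paper's own: the paper derives this corollary from Theorem~\ref{t:stabimpliesasstab} with $m=1$, noting (in the remark preceding the corollary) that stability of $-D+B$ — i.e.\ $\mu(-D+B)\leq 0$ with semisimple eigenvalues on the imaginary axis — makes the semigroup $\{e^{(-D+B)t}\}_{t\geq 0}$ bounded, so the construction \eqref{eq:defextnorm} yields an absolute nonstrict Lyapunov function without any irreducibility assumption. Your observation that irreducibility in Proposition~\ref{p:metextr} is used only to force boundedness of $\mathcal{H}$ is exactly the point the paper makes.
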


\begin{proof} (of Theorem~\ref{t:stabimpliesasstab}) If
    \eqref{eq:Falllinsw} is stable, then a nonstrict Lyapunov function
    $v(\cdot)$ on $\RR^n$ has the property that for all switching signals
    $\sigma \in \overline{{\cal S}}$ and all initial conditions we have
    \begin{equation}
        \label{eq:linlyap}
        v(\Phi_\sigma(t,0)x^0) \leq v(x^0) \,,\quad \forall \ t\geq0\,.
    \end{equation}
    {\bf Step 1:} We first show that $v$ is a nonstrict Lyapunov
    function for \eqref{eq:Fallsw} on $\RR^n_+$.

    Consider a vector $x\in \RR^n_+, x\neq 0$ and a dual vector $y$ with
    $v^*(y)=1$ for $x$ and fix a constituent system given by $(B_j,D_j),
    j=1,\ldots,m$. As $v$ is an absolute norm we have using
    \eqref{eq:linlyap} and \eqref{eq:dualClarke} that
     \begin{equation}
         \label{eq:extnormFall}
         \langle y , (-D_j + B_j)x\rangle \leq 0\,.
     \end{equation}
     Further, as $x_i> 0$ implies $y_i\geq0$ by Lemma~\ref{lem:dualabs} it
     follows that
     \begin{equation}
         \label{eq:dual0}
       \langle y, - \diag(x) B_j x\rangle \leq 0,
      \end{equation}
      so that we always have
     \begin{equation}
         \label{eq:dual1}
         \langle y, f_j(x) \rangle = \langle y , (-D_j + B_j)x - \diag(x) B_j x \rangle \leq 0\,.
     \end{equation}
     In particular, this shows that \eqref{eq:Fallsw} is uniformly stable
     with respect to $\sigma \in \overline{{\cal S}}$. It remains to show
     attractivity.

     {\bf Step 2:} We now investigate under which conditions
     \eqref{eq:dual1} may fail to be strict. By \eqref{eq:extnormFall} and
     \eqref{eq:dual0} this can only happen, if in both these equations
     equality holds. Assuming this we obtain from equality in
     \eqref{eq:dual0}
\begin{equation*}
  0= - \langle y, \diag(x) B_j x \rangle
 %       &= - \sum_{i=1}^{n} y_i x_i (B_jx)_i \\
         = - \sum_{x_iy_i\neq 0} y_i x_i (B_jx)_i  \,.
\end{equation*}
As $y_i\geq 0$, if $x_i>0$, this implies that from $x_i>0$ we can conclude
$y_i(B_jx)_i=0$. Plugging this into \eqref{eq:extnormFall} we obtain that
\begin{equation*}
    0= \langle y , (-D_j + B_j)x\rangle = \sum_{x_i>0} -x_iy_iD_{j,ii} + \sum_{x_i=0} y_i(B_jx)_i\,.
\end{equation*}
Again by Lemma~\ref{lem:dualabs} the first sum on the right hand side is
negative. We can conclude that the second sum has to be positive to
compensate this, so that there are $x_i=0$ with $(B_jx)_i>0$.

{\bf Step 3:} To exploit the property established in Step 2, that
solutions which are not infinitesimally decreasing have to move to the
interior of the positive orthant we add a further term to the Lyapunov
function. To this end let $\psi:\RR\to \RR$ be a $C^\infty$ function with
support contained in $(-\infty,1]$ and so that
\begin{equation*}
    \psi(0)= 1\,,\quad \psi(z) >0 \,,\ z\in [0,1)\,, \quad \psi'(z)<0\,,\ z \in
[0,1)\,,
\end{equation*}
and that in addition $\psi'$ is increasing on $[0,1]$.

For $\varepsilon\in(0,1)$ we denote $\psi_\varepsilon(z):=
\psi(z+(1-\varepsilon))$ and note that the support of $\psi_\varepsilon$
is contained in $(-\infty,\varepsilon]$ and that
$\eta(\varepsilon):=|\psi'_\varepsilon(0)|=\max_{z\in[0,\varepsilon]}|\psi_\varepsilon'(z)|
= \max_{z\in[1-\varepsilon,1]}|\psi'(z)|$
tends to $0$ as $\varepsilon\to 0$, because $\psi'(1)=0$.

Fix $0<\ell<L$. We aim to show that there exists
  $1>\varepsilon(\ell,L)>0$ so that for all $0< \varepsilon <
  \varepsilon(\ell,L)$ the function
\begin{equation}
    \label{eq:Vhatdet}
    V_\varepsilon(x) := v(x)\left(1 + \sum_{i=1}^n \psi_\varepsilon(x_i) \right) =
v(x)\left(1+ \sum_{x_i<\varepsilon} \psi_\varepsilon(x_i) \right)
\end{equation}
is a strict Lyapunov function for \eqref{eq:Fallsw} on the set $v^{-1}([\ell,L]):=\{
x\in \RR^n_+ \midset \ell\leq v(x) \leq L \}$. As $\ell>0$ may be chosen to be
arbitrarily small and $L>0$ arbitrary large, this shows that
the disease free equilibrium is globally asymptotically stable, as
  explained in Step 5 below.

{\bf Step 4:} We proceed to prove the claim formulated in the previous
step. Fix $x\in v^{-1}([\ell,L])$. We wish to show that 
  for $\varepsilon>0$ sufficiently small, we have for all elements of the
Clarke subgradient $p\in\partial_C V_\varepsilon(x)$ and $j=1,\ldots,m$
that
\begin{equation}
    \label{eq:CScond}
    \langle p , f_j(x) \rangle < -\theta < 0\,,
\end{equation}
where $\theta=\theta(x)>0$ is a suitable constant. As $\psi_\varepsilon$ is
smooth it is easy to see that the elements of $\partial_C V_\varepsilon(x)$ are of the form
\begin{equation}
    \label{eq:CPform}
    p=y \left(1+ \sum_{i=1}^n \psi_\varepsilon(x_i) \right) +
    v(x) \sum_{x_i<\varepsilon} \psi'_\varepsilon(x_i) e_i\,,
\end{equation}
where $e_i$ denotes the $i$th unit vector and $y\in \partial_C v(x)$,
i.e. $y$ is dual to $x$ and $v^*(y)=1$.

For the sake of estimation set $d_{\max}:= \max \{ D_{j,ii} \midset
j=1,\ldots,m, i=1,\ldots,n \}$. We now distinguish two cases. First, if
for fixed $j$ we have $\max \{ \langle y,f_j(x) \rangle \midset
y\in \partial_C v(x) \} = -c < 0$, then choose $1>\varepsilon>0$ so that
\begin{equation}
    \label{eq:comp1}
     L \, n\,  d_{\max}\, \eta(\varepsilon) \varepsilon < \frac{c}{2} \,.
\end{equation}
Then we obtain for $p\in\partial_C V_\varepsilon(x)$ given by \eqref{eq:CPform}
 that
\begin{align}
    \label{eq:CSconda}
    \langle p , f_j(x) \rangle &= \left(1+ \sum_{i=1}^n
      \psi_\varepsilon(x_i) \right) \langle y ,f_j(x) \rangle +
    v(x) \sum_{x_i<\varepsilon} \psi'_\varepsilon(x_i) \langle e_i , f_j(x)\rangle\\
    &\leq -c + v(x) \sum_{x_i<\varepsilon} \psi'_\varepsilon(x_i) \left(
      -D_{j,ii}x_i + (1-x_i)(B_jx)_i\right)\,. \nonumber \intertext{As
      $\psi'_\varepsilon(z)\leq0$ for $z\geq 0$ and the summands in the
      second term are only nonzero for $x_i<1$ we have by nonnegativity of
      $(B_jx)_i$} \nonumber
    & \leq -c + L \sum_{x_i<\varepsilon} \psi'_\varepsilon(x_i) \left( -D_{j,ii}x_i \right)\\
    & \leq -c + n\, L \, d_{\max}\, \eta(\varepsilon) \varepsilon < -
    \frac{c}{2} \,.
\end{align}

Secondly, consider the case $\max \{ \langle y,f_j(x) \rangle \midset
y\in \partial_C v(x) \} = 0$. Then we have seen in Step 2 that
for some index $k$ we have $x_{k}=0$ and $(B_jx)_k >0$. Choose
$1>\varepsilon>0$ small enough so that
\begin{equation}
    \label{eq:comp2}
     n  \, d_{\max}  \varepsilon  < (B_jx)_k \,.
\end{equation}
Then continuing from \eqref{eq:CSconda} we obtain
\begin{align}
    \label{eq:CSconda2}
    \langle p , f_j(x) \rangle & \leq v(x) \sum_{x_i<\varepsilon}
    \psi'_\varepsilon(x_i) \left( -D_{j,ii}x_i + (1-x_i)(B_jx)_i\right)\\
    \nonumber & \leq v(x) \left( \psi_\varepsilon'(0) (B_jx)_k +
      \sum_{x_i<\varepsilon, i\neq k} \psi'_\varepsilon(x_i) \left( -D_{j,ii}x_i +
        (1-x_i)(B_jx)_i\right)  \right)\\
    \intertext{and using that $-\psi'_\varepsilon(0)=\eta(\varepsilon)$} &
    \leq - \ell \,\eta(\varepsilon) \bigl( (B_jx)_k - n\, d_{\max}\,
    \varepsilon \bigr) < 0\,.
\end{align}

In both cases we have shown that we can satisfy the decrease condition
\eqref{eq:CScond} for all $j=1,\ldots,m$, all $x \in
  v^{-1}([\ell,L])$ and all $p\in \partial_C V_\varepsilon(x)$ by
choosing $1>\varepsilon>0$ small enough, but dependent on
$j,x$. In particular, note that the conditions \eqref{eq:comp1}
  and \eqref{eq:comp2} show that this conclusion holds for all
  $\varepsilon>0$ sufficiently small. Also, as the set-valued map $x
  \mapsto \partial_C V_\varepsilon(x)$ is upper semicontinuous with
  compact values, it follows that \eqref{eq:CScond} holds on an open
  neighbourhood of a point $x$. As there are finitely many constituent
systems and the set $v^{-1}([\ell,L])$ is compact, we can
use continuity and an open cover/compactness argument to conclude that
there is an $\varepsilon>0$ such that the decrease condition holds
uniformly for $V_\varepsilon$ for all $x  \in
  v^{-1}([\ell,L])$.

{\bf Step 5:} In oder to show uniform attractivity, note first
  that by construction $v(x) \leq V_\varepsilon(x) \leq (1+n) v(x)$.  Fix
$\delta>0$. Then for any $x^0\in \RR^n_+$, we may choose $L>0$ such that
$v(x) \leq L$, and $\varepsilon=\varepsilon(L,\delta)$ such that
$V_\varepsilon$ is a strict Lyapunov function on
$\{ x \midset \delta \leq v(x) \leq L
\}$. Let $c>0$ be the uniform constant of decay on this set, i.e. a
uniform bound $\langle p , f_j(x) \rangle \leq -c$ for all $x\in
  v^{-1}([\delta,L])$ and $p\in\partial_C V_\varepsilon(x)$. Then for any
switching signal $\sigma\in \overline{{\cal S}}$ we have
$V_\varepsilon(\varphi(t,x^0,\sigma)) \leq V_\varepsilon(x^0) -c
  t$ as long as $v(\varphi(t,x^0,\sigma))\geq \delta$. It follows that
uniformly $v(\varphi(t,x^0,\sigma))\leq (n+1)\delta$ for all $t\geq
(V_\varepsilon(x^0)-\delta)/c$ and all $\sigma$. As $\delta>0$ and $x^0$ are arbitrary
this shows uniform attractivity.  This completes the proof. \hfill~\qed
\end{proof}

\begin{remark}[Epidemiological Interpretation] 
\label{rem:BioInt1} Understanding the stability properties of disease-free and endemic equilibria is a fundamental issue in mathematical epidemiology.  Much of the literature on this topic investigates the existence of threshold parameters which can be used to identify potential epidemic outbreaks.  The previous result suggests that the joint Lyapunov exponent may be used as a threshold parameter for time-varying epidemiological systems described by switched SIS models.  Recall that the system matrices $B_j$ and $D_j$ are determined by the contact rates between different patches or subgroups in the population as well as by the birth and death rates and the rates at which infectives are cured.  Theorem \ref{t:stabimpliesasstab} applies to situations where these parameters are allowed to vary in time.  Subject to the irreducibility assumption, it establishes that the disease-free equilibrium is globally asymptotically stable for every measurable switching signal provided the linearisation is stable.  Thus writing $\rho$ for the JLE, we have a threshold-type condition with the critical value being $\rho = 0$.  

An advantage of this result is that no precise knowledge of the temporal pattern underlying the parameter variation is required.  In this sense, it allows us to conclude that a disease will eventually die out irrespective of how the key epidemiological parameters vary in time.  Results of this nature are potentially useful for public health authorities as it may not in general be possible to know precisely when and how the contact patterns between different subgroups of a population will change.  Of course, given that the conclusions are so strong and that the result essentially covers a ``worst-case'', it is possible that the conditions provided by Theorem \ref{t:stabimpliesasstab} are practically conservative; however, given that we are dealing with questions of public health, this may not be so serious an objection.

\end{remark}

%%%%%%%%%%%%%%%%%%%%%%%%%%%%%%%%%%%%%%%%%%%%%%%%%%%%%%%%%%%%%%%%%%%%%%%%%%%%%%%
\section{Persistence and Periodic Orbits}
\label{sec:Per} In the previous section, we considered the
asymptotic stability of the Disease Free Equilibrium (DFE) for a switched SIS
model, all of whose constituent systems possess a globally
asymptotically stable DFE.  We now describe conditions in which all
constituent systems have a GAS DFE but for which there exist
switching laws that give rise to endemic behaviour.  Throughout the section, $D_1, \ldots, D_m$ are diagonal matrices with positive
entries along the main diagonal and $B_1, \ldots , B_m$ are
\textit{irreducible} nonnegative matrices.  We assume throughout
that for $1 \leq j \leq m$, $\mu(-D_j + B_j) < 0$ so that the DFE is
globally asymptotically stable for each constituent system of
(\ref{eq:Fallsw}).  We first show that if there is a matrix $R$ in
the convex hull $\conv\{-D_1 + B_1, \ldots , -D_m + B_m\}$ with $\mu(R) >
0$, then there exists a switching signal $\sigma \in \mathcal{S}$
for which the associated system (\ref{eq:Fallsw}) is strongly
persistent.  As $\Sigma_n$ is forward invariant under (\ref{eq:Fallsw}) for all switching signals, in this section we take $\Sigma_n$ to be the state space.

We first recall a result concerning averaging for time-varying differential equations, which will prove very useful for the analysis in this section.  Consider the time-varying differential equation
\begin{equation}
\label{eq:time-var}
\dot{x}(t) = f(x, t)\,, \quad \; x(0) = x^0\,,
\end{equation}
where $f:\Sigma_n \times \RR_+ \rightarrow \RR^n$ is $K$-Lipschitz in $x$ and measurable in $t$.  Furthermore, we assume that $f$ is bounded by $r$ and periodic with period $T$, so that $f(x, t+T) = f(x, t)$ for all $t \in \RR_+$, $x \in \Sigma_n$.  Define the averaged system
\begin{equation}
\label{eq:avera} \dot{x} = f_0(x)\,, \quad\; x(0) = x^0\,,\quad \text{ where }
f_0(x) = \frac{1}{T} \int_0^T f(x, s) ds.\end{equation}
Combining Theorem 4.1 of \cite{Art07} with Remark 7.1 of the same paper establishes the following fact.
\begin{theorem}
\label{thm:Aver1} Let $x(t, x^0)$, $y(t, x^0)$ denote the solutions of the systems (\ref{eq:time-var}), (\ref{eq:avera}) respectively.  Then for $t \in [0, 1]$,
\[\|x(t, x^0) - y(t, x^0)\|_{\infty} < T(re^{2K}(2+K)).\]
\end{theorem}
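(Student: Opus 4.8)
The plan has two layers. Since \cite{Art07} already contains a general first-order averaging theorem, the primary step is just to check that its hypotheses are met here: $f$ is Carath\'eodory ($K$-Lipschitz in $x$, measurable in $t$) and uniformly bounded by $r$, and the periodicity $f(x,t+T)=f(x,t)$ forces the average of $f(x,\cdot)$ over \emph{any} window of length $T$ to equal $f_0(x)$ exactly, so that the ``averaging defect'' appearing in the general estimate vanishes. One also notes the elementary facts that $f_0$, being a mean of the maps $f(\cdot,s)$, is again $K$-Lipschitz and bounded by $r$; hence the averaged system (\ref{eq:avera}) has a unique solution $y(\cdot,x^0)$ on $[0,1]$, taking values in $\Sigma_n$ since the inward-pointing conditions defining the box $\Sigma_n$ are preserved under averaging. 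With these checks done, Theorem~4.1 of \cite{Art07} together with Remark~7.1 there yields the displayed inequality.

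For completeness I would also give the underlying estimate directly. Write $x(s):=x(s,x^0)$, $y(s):=y(s,x^0)$, and let $g(x,s):=f(x,s)-f_0(x)$ be the oscillatory part, so that $\|g(x,s)-g(x',s)\|_\infty\le 2K\|x-x'\|_\infty$ and $\|g(x,s)\|_\infty\le 2r$. Subtracting the integral forms of the two systems gives, with $E(t):=\int_0^t g(x(s),s)\,ds$,
\[
x(t)-y(t) \;=\; E(t) + \int_0^t\bigl(f_0(x(s))-f_0(y(s))\bigr)\,ds ,
\]
whence $\|x(t)-y(t)\|_\infty\le \|E(t)\|_\infty + K\int_0^t\|x(s)-y(s)\|_\infty\,ds$; by Gr\"onwall's inequality it then suffices to bound $\|E(t)\|_\infty$ uniformly on $[0,1]$ by a quantity of order $T$. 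For that, partition $[0,t]\subseteq[0,1]$ into at most $\lfloor 1/T\rfloor$ consecutive blocks of length $T$ plus one shorter remainder block. On a full block $[t_k,t_{k+1}]$ freeze the state at $x_k:=x(t_k)$; the periodicity and the definition of $f_0$ give $\int_{t_k}^{t_{k+1}}g(x_k,s)\,ds=0$, so that
\[
\int_{t_k}^{t_{k+1}} g(x(s),s)\,ds \;=\; \int_{t_k}^{t_{k+1}}\bigl(g(x(s),s)-g(x_k,s)\bigr)\,ds ,
\]
and since $\|x(s)-x_k\|_\infty\le r(s-t_k)$ (from $\|f\|\le r$) this block contributes at most $2Kr\int_{t_k}^{t_{k+1}}(s-t_k)\,ds=KrT^2$. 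Summing over the $\le 1/T$ full blocks and estimating the remainder crudely by $2rT$ gives $\|E(t)\|_\infty\le (K+2)rT$ for all $t\in[0,1]$, and Gr\"onwall then yields $\|x(t)-y(t)\|_\infty\le (K+2)rT\,e^{K}$ on $[0,1]$, which (using $e^{K}\le e^{2K}$) gives the bound $T\bigl(re^{2K}(2+K)\bigr)$ of the statement.

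The one genuinely delicate step is the estimate of $E(t)$. The classical device --- absorbing the oscillation into a near-identity change of coordinates built from a $t$-antiderivative of $g$ --- is awkward here, because $f$, and hence that antiderivative, is only Lipschitz (not $C^1$) in $x$ and so cannot simply be differentiated; the blockwise freezing argument sketched above avoids this, at the price of a little care with the remainder block, which a full period need not fill (so it is bounded directly) and which is all there is when $T>1$. Pinning down the precise constant $re^{2K}(2+K)$ is then only bookkeeping: \cite{Art07} produces exactly that form, and the elementary argument yields an estimate at least as strong, so the theorem follows either way. The remaining ingredients --- the Lipschitz constant and bound for $f_0$, well-posedness of the two systems, and the Gr\"onwall step --- are routine.
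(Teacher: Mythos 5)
Your proposal is correct, and its first layer is exactly what the paper does: the paper offers no proof of Theorem \ref{thm:Aver1} beyond the sentence preceding it, namely that the statement follows by combining Theorem 4.1 of \cite{Art07} with Remark 7.1 of that paper, which is precisely your hypothesis-checking step (Lipschitz/measurable/bounded data, exact periodicity so the averaging defect over any window of length $T$ vanishes, and $f_0$ inheriting the bound $r$ and Lipschitz constant $K$). What you add beyond the paper is a self-contained elementary proof, and it checks out: the decomposition $x(t)-y(t)=E(t)+\int_0^t\bigl(f_0(x(s))-f_0(y(s))\bigr)\,ds$, the blockwise freezing of the state on the at most $\lfloor 1/T\rfloor$ full periods (where the frozen oscillatory integral vanishes by periodicity, and the drift estimate $\|x(s)-x_k\|_\infty\le r(s-t_k)$ gives $KrT^2$ per block), the crude $2rT$ bound on the incomplete block, and the Gr\"onwall step all combine to give $\|x(t)-y(t)\|_\infty\le (2+K)rT\,e^{K}$ on $[0,1]$, which is in fact slightly sharper than the stated bound $T\,r e^{2K}(2+K)$ (strictly below it whenever $K>0$). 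Your remark about why the classical near-identity change of variables is awkward for merely Lipschitz $f$, and why the freezing argument sidesteps it, is apt. The only cosmetic caveats: the claimed forward invariance of $\Sigma_n$ under the averaged flow is asserted rather than proved (for the error estimate you only need solutions of both systems to exist and $f$ to be bounded along them, which is what the paper's setting provides), and in the degenerate case $K=0$ your argument delivers the estimate with $\le$ rather than $<$; neither affects the substance.
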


In applying Theorem \ref{thm:Aver1} to the switched system (\ref{eq:Fallsw}), we shall take $f(x, t)$ to be of the form
\[f(x, t) = f_{\sigma(t)}(x)\] for some $\sigma \in \mathcal{S}$.  As each $f_j$ in the definition of (\ref{eq:Fallsw}) is $C^1$ on $\RR^n$, it follows that for any $\sigma$, $f_{\sigma(t)}(x)$ is $K$-Lipschitz in $x$ and bounded for $x$ in the state space $\Sigma_n$.

\begin{proposition}
\label{prop:persis} Consider the switched SIS model
(\ref{eq:Fallsw}).  Let $\mu(-D_j + B_j) < 0$ for $1 \leq j \leq m$
and assume that there exists some $R \in \conv\{-D_1+B_1,
\ldots, -D_m + B_m\}$ with $\mu(R) > 0$.  Then there exists $\sigma
\in \mathcal{S}$ such that for all $x^0 > 0$, $1 \leq i \leq n$
\[\liminf_{t \rightarrow \infty} x_i(t, x^0, \sigma) > 0.\]
\end{proposition}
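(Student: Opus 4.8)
The plan is to construct a periodic switching signal whose averaged vector field has a globally asymptotically stable endemic equilibrium in $\intt(\Sigma_n)$, and then use the averaging estimate of Theorem~\ref{thm:Aver1} together with the monotone-systems machinery to transfer this behaviour to the switched system itself. The starting point is the hypothesis that some $R=\sum_j \lambda_j(-D_j+B_j)$ with $\lambda_j\geq0$, $\sum_j\lambda_j=1$, satisfies $\mu(R)>0$. First I would observe that $R$ has the form $-D+B$ with $D=\sum_j\lambda_j D_j$ diagonal with positive entries and $B=\sum_j\lambda_j B_j$ nonnegative; moreover, since each $B_j$ is irreducible, $B$ is irreducible. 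Hence the autonomous SIS system $\dot x = (-D+B-\diag(x)B)x$ falls exactly under Theorem~\ref{thm:Fall2}: because $\mu(-D+B)>0$ is equivalent to $R_0=\rho(D^{-1}B)>1$, this system has a unique endemic equilibrium $\bar x \in \intt(\RR^n_+)\cap\intt(\Sigma_n)$, asymptotically stable with region of attraction $\RR^n_+\setminus\{0\}$.

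Next I would realise $R$ as the average of a periodic switching signal. For a period $T>0$, split $[0,T)$ into consecutive subintervals of lengths $\lambda_j T$ on which $\sigma\equiv j$, and extend $T$-periodically; call this signal $\sigma_T\in\mathcal{S}$ (valid since we may take $T$ large enough that each $\lambda_j T\geq\tau$, discarding any $j$ with $\lambda_j=0$). The averaged vector field of the $T$-periodic system $\dot x=f_{\sigma_T(t)}(x)$ is then
\begin{equation*}
f_0(x)=\frac{1}{T}\int_0^T f_{\sigma_T(s)}(x)\,ds=\sum_{j=1}^m \lambda_j f_j(x)=\bigl(-D+B-\diag(x)B\bigr)x,
\end{equation*}
because the quadratic term is $-\diag(x)\bigl(\sum_j\lambda_j B_j\bigr)x=-\diag(x)Bx$. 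So the averaged system is precisely the autonomous SIS system with endemic equilibrium $\bar x$. By Theorem~\ref{thm:Aver1}, on any unit time interval the flow of the $T$-periodic system stays within $O(T)$ (in $\|\cdot\|_\infty$, uniformly over $\Sigma_n$) of the flow of the averaged system; iterating this bound over the intervals $[k,k+1]$ and using Gronwall-type control of the accumulated error, the $T$-periodic flow approximates the averaged flow uniformly on, say, $[0,N]$ for any fixed $N$, with error $\to0$ as $T\to0$.

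To extract persistence, I would fix a small compact neighbourhood $\mathcal{N}\subset\intt(\Sigma_n)$ of $\bar x$ which is forward invariant and attracting for the averaged system, and choose a finite time $N$ so that every trajectory of the averaged system started from a suitable compact set $C\subset\intt(\RR^n_+)$ (chosen below) enters $\mathcal{N}$ by time $N$. Choosing $T$ small enough, the $T$-periodic flow maps $C$ into a slightly larger neighbourhood $\mathcal{N}'\subset\intt(\Sigma_n)$ within time $N$; by a standard argument one then produces a forward-invariant compact set for the $T$-periodic flow contained in $\intt(\Sigma_n)$, bounded away from $\bd(\RR^n_+)$ by some $\delta>0$, which gives the uniform lower bound $\liminf_{t\to\infty}x_i(t,x^0,\sigma_T)\geq\delta$ for all initial conditions in that set. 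The remaining point is to handle \emph{all} $x^0>0$, not just those in the invariant set: here I would use that the linearisation $\dot x=(-D_{\sigma(t)}+B_{\sigma(t)})x$ has, along $\sigma_T$ for $T$ small, a dominant Lyapunov exponent close to $\mu(R)>0$, so by the comparison/monotonicity tools (irreducibility of the $B_j$ forces strict positivity of the flow for $t>0$, and a Perron--Frobenius-type lower estimate on the linearised flow) any $x^0>0$ is pushed into the attracting region in finite time; then forward invariance finishes the argument.

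The main obstacle I anticipate is the last step — propagating the lower bound from the special invariant set to every $x^0>0$. Averaging as stated in Theorem~\ref{thm:Aver1} is a finite-time comparison and says nothing directly about $\liminf$, and near $\bd(\RR^n_+)$ the approximation of the averaged flow is delicate precisely because that is where the nonlinear term is negligible but the relevant dynamics are governed by the (linear, growing) linearisation rather than the averaged nonlinear flow. Reconciling these two regimes — using averaging in the interior where $\bar x$ is strongly attracting, and using positivity/irreducibility of the linearisation near the boundary to guarantee escape from the boundary in uniformly bounded time — is the technical heart of the proof, and is where I would expect the bulk of the work to lie.
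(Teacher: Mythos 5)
Your setup (realising $R$ as the average of a periodic signal, passing to the averaged SIS system, invoking Theorem~\ref{thm:Fall2} and the averaging estimate of Theorem~\ref{thm:Aver1}) coincides with the paper's. But the step you yourself flag as the ``technical heart'' is a genuine gap, and the repair you sketch does not close it. Your plan is: build a forward-invariant compact set in $\intt(\Sigma_n)$ for the $T$-periodic flow by perturbing the attractor of the averaged system, and then argue that every $x^0>0$ reaches it because the periodic linearisation at the origin has a Lyapunov exponent near $\mu(R)>0$ and the $B_j$ are irreducible. Two problems: (a) the invariant-set construction for the periodic flow is only asserted (``by a standard argument''), and it needs $T$ small uniformly over a compact set of initial data together with an $N$-step composition of the unit-interval estimate of Theorem~\ref{thm:Aver1}; (b) more seriously, instability of the linearisation at $0$ only controls trajectories near the origin, while $x^0>0$ may lie near other parts of $\bd(\RR^n_+)$, and the time the \emph{averaged} flow needs to enter your neighbourhood $\mathcal{N}$ blows up as $x^0$ approaches $0$ or the boundary, so no single choice of $T$ in the finite-horizon averaging estimate covers all initial conditions. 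Nothing in your argument rules out the periodic trajectory lingering near $\bd(\RR^n_+)$ for arbitrarily long times, which is exactly what the $\liminf$ statement must exclude. (A minor point: taking $T$ large so that $\lambda_j T\geq\tau$ is unnecessary and conflicts with needing $T\to0$; the dwell time $\tau$ in the definition of $\mathcal{S}$ is signal-dependent, so any fixed small $T$ is admissible.)

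The paper avoids both difficulties by exploiting monotonicity rather than attractor perturbation. Since $\mu(-\hat D+\hat B)>0$, there is $v\gg0$ with $(-\hat D+\hat B)v\gg0$, and after scaling $v$ down the full averaged field satisfies $\hat f(v)\gg0$, so by Lemma~\ref{lem:nonincreas} the averaged trajectory from $v$ is increasing. Averaging is then used only on $[0,1]$: for $T$ small, $x(1,v,\sigma)\gg v$ and $x(s,v,\sigma)\gg0$ on $[0,1]$, giving $\delta>0$. Periodicity plus cooperativity yield the ratchet $x(1+t,v,\sigma)=x\bigl(t,x(1,v,\sigma),\sigma\bigr)\gg x(t,v,\sigma)$, hence $x_i(t,v,\sigma)\geq\delta$ for all $t\geq0$ — no infinite-horizon control of the averaging error is ever needed. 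Initial conditions above $v$ are handled by monotonicity; interior initial conditions below $v$ by the subhomogeneity $f_j(\lambda x)\geq\lambda f_j(x)$ and a differential-inequality comparison giving $x(t,\lambda v,\sigma)\geq\lambda x(t,v,\sigma)$; and general $x^0>0$ by irreducibility of the $B_j$, which forces $x(t,x^0,\sigma)\gg0$ for $t>0$. If you replace your attractor/linearisation step by this increasing-trajectory and subhomogeneity argument, your proof goes through.
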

\begin{proof}
By assumption, $R$ is in $\conv\{-D_1+B_1,
\ldots, -D_m + B_m\}$ and hence can be written as
\begin{equation}
\label{eq:R} R = \kappa_1 (-D_1 + B_1) +\ldots + \kappa_m (-D_m + B_m),
\end{equation}
where $\kappa_j \geq 0$ and $\sum_{j=1}^m \kappa_j = 1$.  Define
$\hat{D} = \kappa_1D_1 + \cdots + \kappa_m D_m$, $\hat{B} =
\kappa_1B_1 + \cdots + \kappa_m B_m$ and consider the autonomous SIS
model given by
\begin{equation}
\label{eq:AvSIS} \dot{x}(t) = (-\hat{D} + \hat{B}) x -
\textrm{diag}(x) \hat{B} x.
\end{equation}
%By assumption $\mu(\hat{D} + \hat{B}) > 0$.  As $\hat{D} + \hat{B}$ is Metzler and irreducible,
%that there exists some $v' \gg 0$ such that $(\hat{D} + \hat{B}) v' = \gamma v'$ for $\gamma > 0$.
%choosing a multiple $v = \delta v'$ of $v'$ with $\delta$ sufficiently small, we can ensure that
%\begin{equation}
%\label{eq:v} (\hat{D} + \hat{B}) v - \textrm{diag}(v) \at{B} v \gg 0.
%\end{equation}
For any $T > 0$, we can define a periodic switching signal $\sigma \in
\mathcal{S}$ as follows.
\begin{eqnarray}
\label{eq:persig}
\sigma(t) &=& 1 \quad \mbox{ for } 0 \leq t < \kappa_1 T \\
\nonumber
\sigma(t) &=& j \quad \mbox{ for } \sum_{k=1}^{j-1} \kappa_k T \leq t < \sum_{k=1}^{j} \kappa_k T, \;\; 2 \leq j \leq m.
\end{eqnarray}
Finally, $\sigma(t + T) = \sigma(t)$ for all
$t \geq 0$.

For ease of notation, we shall write
\begin{equation}\label{eq:fsigma}
f_{\sigma(t)}(x) = (-D_{\sigma(t)} + B_{\sigma(t)})x -
\textrm{diag}(x) B_{\sigma(t)} x,
\end{equation}
and similarly we write
\begin{equation}\label{eq:fhat}
\hat{f}(x) = (-\hat{D} + \hat{B}) x - \textrm{diag}(x) \hat{B} x.
\end{equation}
%We write $x(\cdot, x^0, \sigma)$ for the solution of
%\begin{equation}
%\label{eq:syssigma} \dot{x}(t) = f_{\sigma(t)}(x(t)), \quad \; x(0) = x^0.
%\end{equation}
From Theorem~\ref{thm:Fall2}, there is an endemic equilibrium
$\hat{x}$ for
\begin{equation}\label{eq:sysfhat}
\dot{x}(t) = \hat{f}(x(t))
\end{equation}
which is asymptotically stable with region of attraction
$\Sigma_n \setminus \{0\}$.  Let $\phi(\cdot,
x^0)$ denote the solution of (\ref{eq:sysfhat}) with initial
condition $x^0$.

As $\mu(-\hat{D} + \hat{B}) > 0$, there exists some $v \gg 0$ with
$(-\hat{D} + \hat{B}) v \gg 0$.  As the second term in (\ref{eq:fhat}) is
quadratic in $x$, by suitably scaling $v$ (by a constant less than 1), we
can ensure that $\hat{f}(v) \gg 0$.  It follows from Lemma
\ref{lem:nonincreas} that $\phi(t, v)$ is increasing.  We show that this
implies that the solution $x(\cdot, v, \sigma)$ % of (\ref{eq:fsigma})%
%{eq:syssigma}
satisfies $\liminf_{t \rightarrow \infty} x_i(t, v, \sigma) > 0$ for $1
\leq i \leq n$. Note first that for all $x$
\[\hat{f}(x) = \frac{1}{T} \int_0^T f_{\sigma(s)}(x) ds\]
It follows from Theorem~\ref{thm:Aver1} that for any $\varepsilon > 0$, we can choose $T$
to ensure that
\[ \|x(s, x^0, \sigma) - \phi(s, x^0)\|_{\infty} < \varepsilon\]
for all $s \in [0,1]$, $x^0 \in \Sigma_n$.  In particular, as $\phi(s, v)$ is increasing, we can guarantee by appropriate choice of $\varepsilon$ that $x(s, v, \sigma) \gg 0$ for all $s \in [0, 1]$ and that
$x(1, v, \sigma) \gg v$.  We can without loss of generality assume that $T = \frac{1}{N_0}$ for some integer $N_0$.  Let
$\delta = \min_i \textrm{inf}\{ x_i(s, v, \sigma) : 0 \leq s \leq 1\}$.
Then $\delta > 0$.

By our choice of periodic $\sigma$ (with period $T = 1/N_0$), $x(1 + t, v, \sigma) = x(t,
x(1, v, \sigma), \sigma)$ for $0 \leq t \leq 1$.  As
$f_{\sigma(t)}(\cdot)$ is cooperative for any fixed $t$, it follows
that
\[x(1 + t, v, \sigma) \gg x(t, v, \sigma)\]
for $0 \leq t \leq 1$ and hence $x_i(1+t, v, \sigma) \geq \delta$ for $0 \leq t \leq 1$, $1 \leq i \leq n$.  Iterating, we see that
\[x_i(t, v, \sigma) \geq \delta\] 
for all $t \geq 0$, $1 \leq i \leq n$.
As $f_{\sigma(t)}$ is cooperative for any $t$, it immediately follows that for all $x^0 \geq v$ we have
$x(t, x^0,\sigma) \geq \delta$
for all $t \geq 0$.
Suppose we are given some $x^0$ with $0 \ll x^0 \ll v$. We can
choose some $\lambda < 1$ such that $\lambda v \ll x^0$.
From the form of each $f_j$, it is easy to see that for all $t \geq
0$, $\lambda < 1$ and $x \geq 0$,
\[f_\sigma(t)(\lambda x) \geq \lambda f_{\sigma(t)}(x).\]
Now define $z(t) = \lambda x(t, v, \sigma)$.  Clearly
\[\dot{z}(t) = \lambda f_{\sigma(t)}(x(t, v, \sigma)) \leq f_{\sigma(t)}(z(t)).\]
It now follows from results on differential inequalities (see for instance Theorem A.19 of \cite{SmiThi}) that $z(t) \leq x(t, \lambda v, \sigma)$ for $t \geq 0$.  Hence,
\[x(t, \lambda v, \sigma) \geq \lambda x(t, v, \sigma)\]
for $t \geq 0$.  This immediately implies that
$x_i(t, x^0, \sigma) \geq \lambda \delta$ for all $t \geq 0$, $1 \leq i \leq n$.

Thus far, we have shown that for all initial conditions $x^0 \gg 0$,
\[\liminf_{t \rightarrow \infty} x_i(t, x^0, \sigma) > 0\]
for $1 \leq i \leq n$. To finish the proof, note that for any $x^0 >
0$, as each $B_i$ is irreducible, it follows that $x(t, x^0, \sigma)
\gg 0$ for all $t > 0$ (see Theorem 4.1.1 of \cite{Smi95}).  It is now simple to adapt the above argument to show that in this case also $\liminf_{t \rightarrow \infty} x_i(t, x^0, \sigma) > 0$ for $1 \leq i \leq n$.  This completes the proof.\hfill~\qed
\end{proof}

\begin{remark}[Epidemiological Interpretation] 
\label{rem:BioInt2}
Proposition \ref{prop:persis} illustrates how switched SIS models can exhibit more complicated behaviour than autonomous models.  In particular, it shows that even when each constituent system has a globally asymptotically stable disease free equilibrium, it is possible for the disease to persist in each population subgroup.  When the conditions of the proposition are satisfied, there exists a periodic switching rule $\sigma$ such that $\sup_{T > 0} (inf_{t \geq T} x_i(t)) > 0$.  Epidemiologically, this has the following interpretation.  For this switching rule, not only is the disease free equilibrium not asymptotically stable; there is some time $T_0$ such that a positive fraction of the population of every subgroup is infected at all times after $T_0$.  This reflects classical endemic behaviour where the disease becomes a ``fact of life'' in the population. 
\end{remark} 

The next result shows that under the same assumptions as in
Proposition~\ref{prop:persis}, there exists $\sigma \in \mathcal{S}$
for which (\ref{eq:Fallsw}) admits a periodic orbit in
 $\intt{\mathbb{R}^n_+}$.  To show this result, we use some facts from the degree theory of continuous mappings.  For background on this topic, see \cite{Fac, Lloyd}.  In particular, the facts we use are drawn from Theorem 2.1.2 and Proposition 2.1.3 of \cite{Fac}.  For convenience, we now recall the main points required in our analysis.

Let $\Omega$ be an open region in $\RR^n$ and let $F$ be a continuous function from $\overline{\Omega}$ into $\RR^n$.  If $F(x) \neq 0$ for all $x \in \textrm{bd}(\Omega)$ and $F(\hat x) = 0$ for some $\hat x \in \Omega$, then the degree $\textrm{deg}(F, \Omega, 0) \neq 0$.  Conversely, if $F(x) \neq 0$ for all $x \in \textrm{bd}(\Omega)$ and $\textrm{deg}(F, \Omega, 0) \neq 0$ then there is some $\hat x$ in $\Omega$ with $F(\hat x) = 0$.  Furthermore, if $G$ is another continuous mapping from $\overline{\Omega}$ into $\RR^n$ and
\[\max_{x \in \overline\Omega} \|F(x) - G(x) \|_{\infty} < \inf_{x \in \textrm{bd}(\Omega)} \|F(x)\|_{\infty}\]
then $\textrm{deg}(F, \Omega, 0) = \textrm{deg}(G, \Omega, 0)$.

\begin{theorem}
\label{thm:period}Consider the switched SIS model (\ref{eq:Fallsw}).
Let $\mu(-D_j + B_j) < 0$ for $1 \leq j \leq m$ and assume that there
exists some $R \in \conv\{-D_1+B_1, \ldots, -D_m + B_m\}$ with
$\mu(R) > 0$.  Then there exists $\sigma \in \mathcal{S}$ such that
(\ref{eq:Fallsw}) admits a periodic orbit
\[x(t + 1, x^0, \sigma) = x(t, x^0, \sigma) \quad \forall t \geq 0.\]
\end{theorem}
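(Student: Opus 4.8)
The plan is to realise the desired periodic solution as a fixed point of the time-one (Poincar\'e) map of the periodic switching signal constructed in Proposition~\ref{prop:persis}, and to locate that fixed point strictly inside the positive orthant by a degree argument on a suitable order interval.

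Let $\sigma\in\mathcal{S}$ be the periodic switching signal from the proof of Proposition~\ref{prop:persis}; recall its period may be taken to be $T=1/N_0$ for an integer $N_0$, so that $\sigma$ is also $1$-periodic. Define $P:\Sigma_n\to\Sigma_n$ by $P(x^0):=x(1,x^0,\sigma)$. Forward invariance of $\Sigma_n$ under \eqref{eq:Fallsw} makes $P$ well defined, continuous dependence on initial data makes it continuous, and cooperativity of each $f_j$ makes it monotone on $\RR^n_+$. Since $\sigma$ has period $1$, the cocycle identity gives $x(t+1,\bar x,\sigma)=x(t,x(1,\bar x,\sigma),\sigma)$, so any fixed point $\bar x=P(\bar x)$ generates a $1$-periodic trajectory; it then suffices to produce such an $\bar x$ in $\intt(\RR^n_+)$.

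The argument rests on two strict comparisons. First, the proof of Proposition~\ref{prop:persis} supplies a vector $v$ with $0\ll v\ll\mathbf 1:=(1,\dots,1)^{\top}$ — its construction lets $v$ be scaled down as much as needed — such that $P(v)=x(1,v,\sigma)\gg v$. Second, along \eqref{eq:Fallsw} one has $\dot x_i=-D_{\sigma(t),ii}<0$ whenever $x_i=1$, so the faces $\{x_i=1\}$ are strictly repelling and $P(\mathbf 1)\ll\mathbf 1$. Set $\Omega:=\{x\in\RR^n:v\ll x\ll\mathbf 1\}$, which is open, bounded, with $\overline{\Omega}\subset\intt(\RR^n_+)$, and let $F(x):=x-P(x)$. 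On $\bd(\Omega)$: if $x_i=v_i$ then monotonicity gives $P(x)_i\ge P(v)_i>v_i$, hence $F(x)_i<0$; if $x_i=1$ then $P(x)_i\le P(\mathbf 1)_i<1$, hence $F(x)_i>0$. Thus $F\ne 0$ on $\bd(\Omega)$. The same two estimates applied to $H_\lambda(x):=x-(1-\lambda)P(x)-\lambda p_0$, with $p_0:=\tfrac12(v+\mathbf 1)\in\Omega$, show $H_\lambda\ne0$ on $\bd(\Omega)$ for every $\lambda\in[0,1]$; by homotopy invariance of the degree, $\textrm{deg}(F,\Omega,0)$ equals the degree of the translation $x\mapsto x-p_0$, which is $1$ since $p_0\in\Omega$. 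By the degree facts recalled before the theorem, $F$ has a zero $\bar x\in\Omega$, i.e.\ $P(\bar x)=\bar x$ with $v\ll\bar x\ll\mathbf 1$, and the associated solution is the required periodic orbit in $\intt(\RR^n_+)$. (Equivalently, since $P$ carries the order interval $[v,\mathbf 1]$ into itself, Brouwer's fixed point theorem yields $\bar x$ directly.)

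I expect the only genuine obstacle to be the inequality $P(v)\gg v$: it is not an order-theoretic triviality but is obtained in the proof of Proposition~\ref{prop:persis} from the averaging estimate of Theorem~\ref{thm:Aver1} together with $\mu(-\hat D+\hat B)>0$ for the averaged SIS system, which forces a suitably scaled $v$ to be mapped strictly outward in every coordinate by the switched time-one map. Granting this, the non-vanishing of $F$ on $\bd(\Omega)$ and the degree computation are routine; the remaining care lies only in the cocycle/periodicity bookkeeping and in checking that $\bar x$ is separated from both $\bd(\RR^n_+)$ and the faces $\{x_i=1\}$.
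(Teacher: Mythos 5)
Your proof is correct, but it reaches the fixed point of the time-one map by a genuinely different mechanism than the paper. The paper keeps the averaged system in play throughout: it forms the displacement maps $S_1(z)=\phi(1,z)-z$ and $S_2(z)=x(1,z,\sigma)-z$, uses Theorem~\ref{thm:Fall2} to get the endemic equilibrium $\hat x$ of the averaged system as an isolated zero of $S_1$, and then transfers the nonzero degree from $S_1$ to $S_2$ on a small neighbourhood $\Omega\ni\hat x$ via the averaging estimate of Theorem~\ref{thm:Aver1} and the degree-perturbation fact; this localizes the periodic point near $\hat x$ for small period $T$. You instead work purely with the Poincar\'e map $P$ on the order interval $[v,\mathbf{1}]$: the strict subsolution property $P(v)\gg v$ (which is indeed established verbatim in the proof of Proposition~\ref{prop:persis}, and is the only place averaging enters your argument), the strict repulsion of the faces $\{x_i=1\}$ giving $P(\mathbf{1})\ll\mathbf{1}$, and monotonicity of $P$ (note each $f_j$ is cooperative on $\Sigma_n$ rather than on all of $\RR^n_+$, which suffices by forward invariance of $\Sigma_n$) yield either a degree-one computation by homotopy to a translation or, more simply, Brouwer on the $P$-invariant box. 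Your route is more elementary and self-contained — it does not need the uniqueness/global stability content of Theorem~\ref{thm:Fall2} beyond what Proposition~\ref{prop:persis} already uses, nor homotopy invariance if you take the Brouwer variant — and it localizes the periodic orbit in $[v,\mathbf{1}]$; the paper's argument buys the extra information that the periodic point can be placed in an arbitrarily small neighbourhood of the averaged endemic equilibrium $\hat x$. One small addition you should make, as the paper does: since $\mu(-D_j+B_j)<0$ for all $j$, no point of $\intt(\Sigma_n)$ is an equilibrium of any constituent system, so the fixed point $\bar x\gg 0$ generates a genuinely non-constant periodic orbit rather than a rest point.
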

\begin{proof} As in the proof of Proposition~\ref{prop:persis},
let $R$ be given by (\ref{eq:R}).  Also for $T > 0$, let $\sigma$ in
$\mathcal{S}$ be the switching signal defined by (\ref{eq:persig}).  Let $f_{\sigma(t)}$ and $\hat{f}$ be
given by (\ref{eq:fsigma}) and (\ref{eq:fhat}) respectively.  Finally, we use
$\phi(\cdot, x^0)$ and $x(\cdot, x^0, \sigma)$ to denote the solutions of (\ref{eq:sysfhat}) and (\ref{eq:fsigma}) respectively.

Consider the two continuous mappings defined for $x^0 \in \Sigma_n$ by
\[S_1(x^0) := \int_0^1 \hat{f}(\phi(s, x^0)) ds\,,\quad
S_2(x^0) := \int_0^1 f_{\sigma(s)}(x(s, x^0, \sigma)) ds\,.\]

It follows from Theorem~\ref{thm:Fall2} that there exists
$\hat{x}$ in $\intt(\Sigma_n)$ with $\hat{f}(\hat{x}) = 0$.
Moreover, $\hat{x}$ is an asymptotically stable equilibrium of
(\ref{eq:sysfhat}) with region of attraction $\Sigma_n
\setminus \{0\}$.  This implies that $S_1(\hat{x}) =
0$ and, moreover that $S_1(x) \neq 0$ for $x \in \intt(\Sigma_n) \backslash \{\hat x\}$.  In particular, we can choose some bounded open neighbourhood $\Omega
\subset \intt{\Sigma_n}$ of $\hat{x}$ such that $S_1(z) \neq 0$
for all $z \in \textrm{bd}(\Omega)$.  Let
\[\varepsilon = \textrm{min}\{\|S_1(z)\|_{\infty} : z \in \textrm{bd}(\Omega)\}.\]
As $S_1(z) - S_2(z) = \phi(1, z) - x(1, z, \sigma)$, it follows from Theorem~\ref{thm:Aver1} that we can choose $T = (1/N_0)$ for some positive integer $N_0$ 
such that
\[\textrm{max}_{z \in \overline{\Omega}} \| S_1(z) - S_2(z)\|_{\infty} < \varepsilon.\]
It now follows from the remarks on degree theory given before this theorem that
$\textrm{deg}(S_1, \Omega, 0) = \textrm{deg}(S_2, \Omega, 0)$ and
hence that there exists some $x^1 \in \Omega$ with $S_2(x^1) = 0$.
It follows immediately that \[x(1, x^1, \sigma) = x(0, x^1,
\sigma)\] and as $\sigma$ is $T$-periodic with $T = 1/N_0$, we have that $x(t + 1,
x^1, \sigma) = x(t, x^1, \sigma)$ for all $t \geq 0$.
As $\mu(-D_j + B_j) < 0$ for $1 \leq j \leq m$, (\ref{eq:Fallsw}) possesses no equilibrium in
$\intt(\Sigma_n)$.  Hence, it follows that $x^1$ gives rise to the
claimed periodic orbit.\hfill~\qed
\end{proof}

\begin{remark}
\label{rem:BioInt3} Similarly to Proposition \ref{prop:persis}, Theorem \ref{thm:period} illustrates a form of endemic behaviour that can emerge in the switched model under certain conditions.  When a matrix $R$ with $\mu(R) > 0$ exists there is some switching scheme (pattern of time-variation in the system parameters) for which a periodic solution exists.  Epidemiologically, such a solution amounts to a repetitive seasonal pattern in the number of infectives in the population; the numbers may decrease for some time but eventually return to their initial values and the pattern then simply repeats itself for all time.   As we noted in Remark \ref{rem:BioInt3}, this reflects the disease persisting and becoming a feature of life for the population.
\end{remark}
%%%%%%%%%%%%%%%%%%%%%%%%%%%%%%%%%%%%%%%%%%%%%%%%%%%%%%%%%%%%%%%%%%%%%%%%%%%%%%%%%
\section{Markovian Switching}
\label{sec:markov}
%%%%%%%%%%%%%%%%%%%%%%%%%%%%%%%%%%%%%%%%%%%%%%%%%%%%%%%%%%%%%%%%%%%%%%%%%%%%%%%%
We now examine the stability of the switched SIS model
(\ref{eq:Fallsw}) in which the switching parameter $\sigma : \mathbb{R}_+
\rightarrow \{1,...,m\}$ is given as the realization of a right-continuous
random process, in particular, a piecewise deterministic process.  One way
of specifying such a process is by prescribing the rates $\pi_{ij}(t)$
describing the evolution of the probabilities to switch from $i$ to $j$,
see \cite{Norris} for details.

Let $(\Omega, { \cal B}, \Prob)$ be a probability space, we suppose that
the switching signals $\sigma$ are realizations of a Markov process
described by the following transition probabilities
\begin{equation}
\Prob \{\sigma(t+\Delta)=j|\sigma(t)=i\} =  \left\{
\begin{array}{lr}
\pi_{ij}(t)\Delta+o(\Delta) &\mbox{if } i\neq j, \\
1+\pi_{ii}(t)\Delta +o(\Delta) &\mbox{else,}
\end{array} \right.
\end{equation}
where $\Delta >0$ and $\dsp\lim_{\Delta \rightarrow 0}
o(\Delta)/\Delta=0$. The matrix $\Pi(t)=[\pi_{ij}(t)]$ is the matrix
of the transition probability rates and its components  satisfy
$\pi_{ij} \geq 0$ for $i \neq j$, and $\pi_{ii} = -\dsp\sum_{j\neq
i}^{m}\pi_{ij} $.

In order to be precise about the stability of the random system
(\ref{eq:Fallsw}), we need to define some stability concepts.  We denote
by $\Expect(x(t))$ the expectation of $x(t)$.
\begin{definition}\label{def-stab}
  The  switched SIS model
  (\ref{eq:Fallsw}) under a random switching $\sigma$ is  said to be
 \begin{itemize}
 \item[(i)]
 mean stable  if   $\dsp\lim_{t\rightarrow +\infty} \Expect(x(t))=0$,\; $\forall x(0)
 \in \mathbb{R}^n_+$.
 \item[(ii)]
 mean-square stable if $\dsp\lim_{t\rightarrow +\infty}
 \Expect(x(t)^Tx(t))=0$,\; $\forall x(0) \in \mathbb{R}^n_+$.
 \item[(iii)]
 $ L_1$-stable if    $\|x\|_1 :=\int_0^{+\infty}
 \Expect(\dsp\sum_{i=1}^n x_i(t))dt < +\infty$,\; $\forall x(0) \in
 \mathbb{R}^n_+$.
 \item[(iv)]
 $ L_2$-stable if    $\| x\|_2 :=\int_0^{+\infty}
 \Expect(x^T(t)x(t))dt <  +\infty$,\; $\forall x(0) \in
 \mathbb{R}^n_+$.
 \end{itemize}
\end{definition}

Now, define $\delta_i(\cdot)$ as the random processes given by the indicator
function of the Markovian switching process $\sigma$, i.e.
\[
%\begin{array}{l}
\delta_i(t)=1\;\; \mbox{ if } \sigma(t)=i,\quad \quad
\delta_i(t)=0\;\; \mbox{ if } \sigma(t)\neq i\,.
%\end{array}
\]
The following lemma provides a  connection between the
switched SIS model (\ref{eq:Fallsw}) and a special  deterministic
system.
\begin{lemma}\label{lem-connection}
The state $x(t)$ of the stochastic switched
system~(\ref{eq:Fallsw}) satisfies the
differential  equation
\begin{equation}\label{Mom-syst}
\dot \xi_i(t)=(-D_i+B_i)\xi_i-\Expect[\delta_i(t)\diag(x(t))B_i)x(t)]
+\sum_{j=1}^m\pi_{ji}(t)\xi_j(t), \;\; i=1,\ldots,m,
\end{equation}
 where $ \xi_i(t)=\Expect(\delta_i(t)x(t))$.
\end{lemma}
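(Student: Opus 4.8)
The plan is to derive the differential equation for $\xi_i(t) = \Expect(\delta_i(t)x(t))$ by combining the dynamics of the state $x(t)$ conditioned on the active mode with the infinitesimal generator of the Markov process governing $\sigma$. The two ingredients are: (a) on the event $\{\sigma(t) = i\}$, the state obeys $\dot x(t) = f_i(x(t)) = (-D_i + B_i)x(t) - \diag(x(t))B_i x(t)$, so $\delta_i(t)$ times this gives the ``drift'' contribution; and (b) the jump structure of $\sigma$ redistributes probability mass between modes at rates $\pi_{ji}(t)$, contributing the $\sum_j \pi_{ji}(t)\xi_j(t)$ term. The standard tool that packages both contributions is Dynkin's formula applied to the function $g_i(x, \sigma) := \mathbf{1}_{\{\sigma = i\}} x$, or equivalently a direct computation of $\frac{d}{dt}\Expect(\delta_i(t)x(t))$ from first principles using the transition probabilities over a small interval $[t, t+\Delta]$.

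First I would write, for small $\Delta > 0$,
\[
\delta_i(t+\Delta) x(t+\Delta) = \delta_i(t+\Delta)\bigl(x(t) + f_{\sigma(t)}(x(t))\Delta + o(\Delta)\bigr),
\]
using that $x$ is absolutely continuous with bounded derivative on the forward-invariant compact set $\Sigma_n$. Next I would condition on $\sigma(t)$ and use the transition probabilities: $\Expect[\delta_i(t+\Delta)\mid \mathcal{F}_t] = \delta_i(t)(1 + \pi_{ii}(t)\Delta) + \sum_{j \neq i}\delta_j(t)\pi_{ji}(t)\Delta + o(\Delta)$. Multiplying through, taking expectations, and carefully collecting terms of order $\Delta$: the term $\delta_i(t+\Delta)f_{\sigma(t)}(x(t))\Delta$ contributes, to leading order, $\delta_i(t)f_i(x(t))\Delta$ (since the mismatch event $\{\sigma(t) \neq i, \sigma(t+\Delta) = i\}$ has probability $O(\Delta)$, making the product $O(\Delta^2)$); the term $\delta_i(t+\Delta)x(t)$ contributes $\delta_i(t)x(t) + \bigl(\pi_{ii}(t)\delta_i(t) + \sum_{j\neq i}\pi_{ji}(t)\delta_j(t)\bigr)x(t)\Delta = \delta_i(t)x(t) + \sum_{j=1}^m \pi_{ji}(t)\delta_j(t)x(t)\Delta$. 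Taking expectations, dividing by $\Delta$, and letting $\Delta \to 0$ yields
\[
\dot\xi_i(t) = \Expect[\delta_i(t)f_i(x(t))] + \sum_{j=1}^m \pi_{ji}(t)\xi_j(t).
\]
Finally, expanding $\Expect[\delta_i(t)f_i(x(t))] = \Expect[\delta_i(t)(-D_i+B_i)x(t)] - \Expect[\delta_i(t)\diag(x(t))B_i x(t)] = (-D_i+B_i)\xi_i(t) - \Expect[\delta_i(t)\diag(x(t))B_i x(t)]$ gives exactly \eqref{Mom-syst}.

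The main obstacle is making the $O(\Delta)$ bookkeeping rigorous: one must justify that the cross terms (mode switching combined with the flow increment, and the $o(\Delta)$ remainders) genuinely vanish after dividing by $\Delta$, which relies on the boundedness of $x(t)$ and $f_i$ on $\Sigma_n$ and on $\pi_{ij}(t)$ being finite; and one must justify differentiating under the expectation, again using boundedness. A cleaner alternative, which I would mention, is to invoke Dynkin's formula for the piecewise deterministic Markov process $(x(t), \sigma(t))$ directly with the test functions $g_i(x,k) = \mathbf{1}_{\{k=i\}} x_\ell$ for each coordinate $\ell$: the extended generator acts as $\mathcal{A}g_i(x,k) = \mathbf{1}_{\{k=i\}}\langle \nabla_x x_\ell, f_k(x)\rangle + \sum_j \pi_{kj}(t)g_i(x,j)$, and taking expectations and differentiating recovers \eqref{Mom-syst} with the bookkeeping handled once and for all by the general theory (see \cite{Norris} for the generator of such jump processes).
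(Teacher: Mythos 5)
Your argument is correct and in substance coincides with the paper's: the paper proves this lemma by a one-line appeal to the generalized It\^o formula for Markov jump processes (the reference \cite{Bjo80}), which is exactly the extended-generator/Dynkin route you describe at the end as the ``cleaner alternative''. What you do differently is to carry out the elementary first-principles computation that this citation hides: the expansion of $\delta_i(t+\Delta)x(t+\Delta)$, the conditioning on $\sigma(t)$ via the transition rates, and the observation that the cross term (a switch on $[t,t+\Delta]$ combined with the flow increment) is $O(\Delta^2)$. This buys a self-contained verification whose only analytic inputs are the boundedness of $x(t)$ and of the vector fields $f_i$ on the forward-invariant compact set $\Sigma_n$ and the finiteness of the rates $\pi_{ij}(t)$ --- precisely the points you flag as needing care when dividing by $\Delta$ and differentiating under the expectation --- whereas the paper's approach buys brevity by delegating that bookkeeping to the general theory. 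The final algebraic step, splitting $\Expect[\delta_i(t)f_i(x(t))]$ into $(-D_i+B_i)\xi_i(t)$ minus the quadratic term, is the same in both cases, so no gap remains.
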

\begin{proof}
Follows using the generalized It\^o formula for Markov jumps,
see e.g. \cite{Bjo80}.\hfill~\qed
\end{proof}

 Note that  since $\dsp\sum_{i=1}^m
\delta_i(t)=1,$  the expectation of any trajectory $x(t)$ of the
switched SIS model (\ref{eq:Fallsw}) is given by
\begin{equation}\label{equa-mean}
\Expect(x(t))=\dsp\sum_{i=1}^m\xi_i(t).
\end{equation}

\begin{remark}\label{rem-positive-mean}
We can deduce from Lemma \ref{lem-connection} that the switched SIS
model (\ref{eq:Fallsw}) is positive if and
 only if the  system~(\ref{Mom-syst}) is positive. This  can be  easily  shown from
 the fact that $B_i\geq 0$, for $i=1\ldots,m$
  and $\pi_{ij} \geq 0$ for $i \neq j$.
\end{remark}

In the sequel,  we shall investigate   the stability of the
deterministic system (\ref{Mom-syst}) in the state variable
$\xi(t)=[\xi_1(t) \ldots
 \xi_m(t)]^T$. This will allow us to derive  stability conditions for
 the associated switched SIS model (\ref{eq:Fallsw}).

\begin{remark}\label{rem-stability}
An immediate consequence of the   relation (\ref{equa-mean}) is that
the switched SIS model (\ref{eq:Fallsw}) is mean stable if and only
if the deterministic system~(\ref{Mom-syst}) is globally
asymptotically stable. Indeed,  since $x(t)\geq 0$, we also have
$\xi_i(t)=\Expect(\delta_i(t) x(t))\geq 0$. Hence,
$\Expect(x(t))=\dsp\sum_{i=1}^m \xi_i(t)$  goes to zero if and only
if all  $\xi_i(t)$  go to zero. Also, it can easily be seen that the
switched SIS model (\ref{eq:Fallsw}) is $L_1$-stable if and only if
the deterministic system~(\ref{Mom-syst}) is  $L_1$-stable.
\end{remark}

For ease of notation, we shall use the following matrices
\begin{equation}\label{A-B-matrices}
 {\cal A}_\Pi:=\left[\begin{array}{ccc} -D_1+B_1& & 0\\
&  \ddots &  \\
0& &-D_m+B_m
\end{array}\right]+\Pi\otimes I \;
\end{equation}

\[ {\cal
B}(x(t)):=\left[\begin{array}{c}\Expect[\delta_1(t)\diag(x(t))B_1x(t)]\\
\vdots\\
\Expect[\delta_m(t)\diag(x(t))B_mx(t)]
\end{array}\right]\,,
\text{ where } \quad
\Pi\otimes I=\left[\begin{array}{ccc} \pi_{11}I&\ldots  & \pi_{1m}I\\
\vdots&  \ddots&  \vdots\\
 \pi_{m1}I & \ldots
& \pi_{mm}I
\end{array}\right]
\] 
is the Kronecker product of $\Pi$ and the
identity. Thus, the system~(\ref{Mom-syst}) can be expressed in
compact form as
\begin{equation}\label{Mom-syst-compact-form}
 \dot \xi(t)={\cal A}_\Pi \xi(t)-{\cal B}(x(t)), \mbox{ where } \xi(t)=[\xi_1(t) \ldots \xi_m(t)]^T.
\end{equation}

We now derive $L_1$  stability conditions for the switched SIS
model (\ref{eq:Fallsw}).

\begin{theorem}\label{stab-analysis1}
    Assume that $\Pi(\cdot)$ is bounded, that is, there exists a constant
    Metzler matrix $\bar \Pi$ such that $\Pi(t) \leq \bar \Pi$ for
    all $t \geq 0$.  Then, the switched SIS model (\ref{eq:Fallsw}) is
    $L_1$-stable if the matrix ${\cal A}_{\bar \Pi}$ is Hurwitz.
\end{theorem}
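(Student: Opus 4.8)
The plan is to reduce the statement to a comparison argument for the deterministic surrogate system \eqref{Mom-syst-compact-form}. By Lemma~\ref{lem-connection}, the stacked vector $\xi(t) = [\xi_1(t)^T\ \cdots\ \xi_m(t)^T]^T$ with $\xi_i(t) = \Expect(\delta_i(t)x(t))$ solves $\dot\xi(t) = {\cal A}_{\Pi(t)}\xi(t) - {\cal B}(x(t))$, with initial value $\xi_i(0) = \Prob(\sigma(0) = i)\,x(0) \geq 0$. Two positivity observations underpin everything that follows. First, since the SIS trajectory remains in the forward-invariant set $\Sigma_n \subset \RR^n_+$, each $\delta_i(t)\diag(x(t))B_i x(t)$ is nonnegative pathwise, hence ${\cal B}(x(t)) \geq 0$, and likewise $\xi(t) \geq 0$ for all $t \geq 0$. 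Second, ${\cal A}_{\bar\Pi}$ is Metzler, because its diagonal blocks $-D_i + B_i$ are Metzler and $\bar\Pi \otimes I$ contributes only nonnegative off-diagonal entries ($\bar\Pi$ being Metzler), so $e^{{\cal A}_{\bar\Pi}t} \geq 0$ for $t \geq 0$.

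First I would rewrite the $\xi$-equation as $\dot\xi(t) = {\cal A}_{\bar\Pi}\xi(t) - h(t)$, where $h(t) := {\cal B}(x(t)) - \bigl((\Pi(t) - \bar\Pi)\otimes I\bigr)\xi(t)$. From the hypothesis $\Pi(t) \leq \bar\Pi$ we get $(\Pi(t) - \bar\Pi)\otimes I \leq 0$, so combined with $\xi(t) \geq 0$ and ${\cal B}(x(t)) \geq 0$ we conclude $h(t) \geq 0$ for all $t$. Variation of constants then gives $\xi(t) = e^{{\cal A}_{\bar\Pi}t}\xi(0) - \int_0^t e^{{\cal A}_{\bar\Pi}(t-s)}h(s)\,ds$, and since $e^{{\cal A}_{\bar\Pi}\tau} \geq 0$ and $h \geq 0$ the integral term is nonnegative, whence $0 \leq \xi(t) \leq e^{{\cal A}_{\bar\Pi}t}\xi(0) =: \eta(t)$ for all $t \geq 0$.

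Next I would invoke the Hurwitz hypothesis: a Hurwitz Metzler matrix satisfies $-{\cal A}_{\bar\Pi}^{-1} \geq 0$ and $\int_0^\infty e^{{\cal A}_{\bar\Pi}t}\,dt = -{\cal A}_{\bar\Pi}^{-1}$, a finite nonnegative matrix, so $\int_0^\infty \eta(t)\,dt = -{\cal A}_{\bar\Pi}^{-1}\xi(0) < \infty$. Writing $\mathbf{1}_k$ for the all-ones vector in $\RR^k$ and using \eqref{equa-mean}, one has $\Expect\bigl(\sum_{i=1}^n x_i(t)\bigr) = \mathbf{1}_n^T\Expect(x(t)) = \mathbf{1}_n^T\sum_{i=1}^m \xi_i(t) = \mathbf{1}_{mn}^T\xi(t) \leq \mathbf{1}_{mn}^T\eta(t)$. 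Integrating over $[0,\infty)$ yields $\|x\|_1 \leq -\mathbf{1}_{mn}^T{\cal A}_{\bar\Pi}^{-1}\xi(0) < \infty$ for every $x(0) \in \RR^n_+$, which is exactly the definition of $L_1$-stability.

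The positive-systems manipulations above (nonnegativity of ${\cal B}$, domination of ${\cal A}_{\Pi(t)}$ by ${\cal A}_{\bar\Pi}$, nonnegativity of $-{\cal A}_{\bar\Pi}^{-1}$) are routine; the step that deserves care is the justification that the $\xi(\cdot)$ furnished by Lemma~\ref{lem-connection} is a bona fide absolutely continuous solution of \eqref{Mom-syst-compact-form}, so that variation of constants applies and the expectations and their time derivatives are well defined. This rests on the uniform boundedness of $x(t)$ in the compact set $\Sigma_n$ together with the generalized It\^o formula cited in the proof of that lemma, and needs no further integrability assumptions. It is also worth highlighting that the argument uses only the existence of the constant dominating matrix $\bar\Pi$ — no regularity of $t \mapsto \Pi(t)$ is required — since the time-varying generator ${\cal A}_{\Pi(t)}$ is replaced by the constant ${\cal A}_{\bar\Pi}$ at the very first step.
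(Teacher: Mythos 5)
Your proof is correct, and it rests on exactly the same ingredients as the paper's: the reduction to the deterministic surrogate system via Lemma~\ref{lem-connection} and Remark~\ref{rem-stability}, nonnegativity of ${\cal B}(x(t))$ and of $\xi(t)$, the domination ${\cal A}_{\Pi(t)} \leq {\cal A}_{\bar\Pi}$, and the Metzler--Hurwitz fact $-{\cal A}_{\bar\Pi}^{-1} \geq 0$. The only real difference is in execution. The paper works with the integrated equation $\xi(t)-\xi(0)=\int_0^t\bigl({\cal A}_{\Pi(s)}\xi(s)-{\cal B}(x(s))\bigr)ds$, majorizes it to get $\xi(t)-\xi(0)\leq \int_0^t{\cal A}_{\bar\Pi}\xi(s)\,ds$, and then multiplies this inequality by the nonnegative matrix $-{\cal A}_{\bar\Pi}^{-1}$ to bound $\int_0^t\xi(s)\,ds$ directly by $-{\cal A}_{\bar\Pi}^{-1}\xi(0)$, never invoking the semigroup $e^{{\cal A}_{\bar\Pi}t}$ or variation of constants. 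You instead absorb both the quadratic term and the defect $(\Pi(t)-\bar\Pi)\otimes I$ into a single nonnegative input $h(t)$, and use nonnegativity of $e^{{\cal A}_{\bar\Pi}t}$ to obtain the pointwise bound $0\leq\xi(t)\leq e^{{\cal A}_{\bar\Pi}t}\xi(0)$ before integrating. Your route buys slightly more: the pointwise exponential bound gives mean stability (indeed exponential decay of $\Expect(x(t))$) in addition to the $L_1$ bound, whereas the paper's integral manipulation yields only the $L_1$ conclusion stated; the paper's route is marginally lighter on regularity, since it only needs the integral form of \eqref{Mom-syst-compact-form} rather than an a.e.\ differential identity feeding variation of constants --- a point you correctly flag as the step needing care. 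Both conclude identically with $\|x\|_1\leq -\mathbf{1}^T{\cal A}_{\bar\Pi}^{-1}\xi(0)<\infty$ in effect, so the proposal is a sound, mildly stronger variant of the paper's comparison argument.
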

\begin{proof}
Taking into account  Remark \ref{rem-stability}, it suffices to
prove that the system~(\ref{Mom-syst-compact-form}) is $L_1$-stable.
The proof uses the well-known  condition that a Metzler
matrix ${\cal A}_{\bar \Pi} $ is Hurwitz if and only if  ${\cal
A}^{-1}_{\bar \Pi}\leq 0$.
Now, since $\dot \xi(t)= {\cal A}_{\Pi}\xi(t) -{\cal B}(x(t))$, we obtain
\[\xi(t)-\xi(0)=\int_0^t {\cal A}_\Pi\xi(s) -{\cal B}(x(s))ds.\]
 As  ${\cal B}(x(t))\geq 0$ and ${\cal A}_{\Pi} \leq {\cal A}_{\bar \Pi}$ we have
$\xi(t)-\xi(0)\leq\int_0^t {\cal A}_{\bar \Pi}\ \xi(s)ds$.
The assumption that ${\cal A}^{-1}_{\bar \Pi}\leq 0$ leads to
\[-{\cal A}^{-1}_{\bar \Pi}\xi(t)+{\cal A}^{-1}_{\bar \Pi}\xi(0)\leq  -\int_0^t\xi(s)ds.\]
Since  $-{\cal A}^{-1}_{\bar \Pi} \xi(t)\geq0$ we obtain
\[\int_0^t\xi(s)ds \leq  -{\cal A}^{-1}_{\bar \Pi}\xi(0),\; \forall t>0.\]

The above inequality shows that the integral $\int_0^t\xi(s)ds$ is bounded
and as it is nondecreasing, so that the integral $\int_0^{+\infty}
\xi(t)dt$ exists and the proof is complete.\hfill~\qed
\end{proof}

Since $L_1$ stability implies $L_2$ stability, then we can deduce
from  Theorem~\ref{stab-analysis1} that  if  $\mu({\cal A}_{\bar
\Pi})<0$, then the switched SIS model (\ref{eq:Fallsw}) is
$L_2$-stable. Indeed,  it is also mean stable and mean-square
stable.

\begin{remark}\label{rem-positive-mean2}
We emphasize that we  cannot apply Theorem~\ref{thm:Fall1}
directly to system~(\ref{Mom-syst}) because it involves a different
structure. Also, Theorem~\ref{stab-analysis1} establishes an
$L_1$ stability condition that also holds for the deterministic
system (\ref{eq:Fall}) (when $\Pi=0$).
\end{remark}

%%%%%%%%%%%%%%%%%%%%%%%%%%%%%%%%%%%%%%%%%%%%%%%%%%%%%%%%%%%%%%%%%%%%%%%%%%%%%%%
\section{Stabilization of the Disease Free Equilibrium by Switching}
\label{sec:Stabilisation} In this section, we assume that each constituent system of (\ref{eq:Fallsw})
has an endemic equilibrium, which is asymptotically stable with
region of attraction $\Sigma_n \setminus \{0\}$.  We shall
show how results from the literature on stabilization of switched
linear systems can be applied to define switching
laws that asymptotically stabilise the Disease Free Equilibrium (DFE).

%The following technical result  will be used for our stabilization
%result.
%\begin{lemma}\label{lem:Hurwitz} Let $M\in \mathbb{R}^{n \times n}$, then the following
%statements are equivalents
%\begin{itemize}
%\item[(i)] $M$ is a Hurwitz matrix.
%\item[(ii)] $\rho(e^{TM})<1$,\;$\forall T \in \mathbb{R}_+$.
%\item[(iii)] There exists $T_0 \in \mathbb{R}_+$ such that $\rho(e^{T_0M})<1$
%\end{itemize}
%\end{lemma}
%\begin{proof}
%$(i)\Rightarrow (ii)$ Assume that $M$ is a Hurwitz matrix, then for
%any  $T \in \mathbb{R}_+$ we have that  $\dsp\lim_{n\rightarrow
%\infty} e^{nTM}=0$. As for any integer $n$ it holds  $e^{nTM}=
%(e^{TM})^n$, then  we can  conclude that $\rho(e^{TM})<1$.

%$(iii)\Rightarrow (i)$ Let  $T_0 \in \mathbb{R}_+$ be such that
%$\rho(e^{T_0M})<1$ and $\lambda$ be any eigenvalue of $M$. Since
%$e^{T_0\lambda}$ is also   an eigenvalue of $e^{T_0M}$, then by
%assumption we have that the modulus of $e^{T_0\lambda}$ is strictly
%less that one, so that the  real part of $\lambda$ is strictly
%negative.

%Since the implication $(ii)\Rightarrow (iii)$ is obvious the proof
%is complete.
%\end{proof}

\begin{theorem}\label{thm:stabilis}
Consider the switched SIS model (\ref{eq:Fallsw}).  Assume that
$\mu(-D_j + B_j) > 0$
 for $1 \leq j \leq m$.  Suppose that there exists some
  $R \in \conv\{-D_1 + B_1, \ldots , -D_m + B_m \}$ for which
$\mu(R) < 0$.  Then, there exists some $T>0$ and a
periodic switching law $\sigma \in \mathcal{S}$ with period $T$  such
that the DFE of (\ref{eq:Fallsw}) is GAS.
\end{theorem}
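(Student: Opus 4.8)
The plan is to reduce the statement to the classical fact that a switched \emph{linear} system is stabilised by sufficiently fast periodic switching whenever a convex combination of its system matrices is Hurwitz, and then to transfer this to the nonlinear SIS model via the comparison estimate of Lemma~\ref{lem:comparison}. Note that, in contrast with the persistence and periodic‑orbit results of Section~\ref{sec:Per}, this route will use neither an irreducibility hypothesis nor the assumption $\mu(-D_j+B_j)>0$; only the existence of a Hurwitz element in $\conv\{-D_1+B_1,\ldots,-D_m+B_m\}$ enters.

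First I would write $R=\sum_{j=1}^m\kappa_j(-D_j+B_j)$ with $\kappa_j\ge 0$, $\sum_j\kappa_j=1$ and $\mu(R)<0$; discarding the indices with $\kappa_j=0$ we may assume $\kappa_j>0$ for all $j$. For $T>0$ let $\sigma_T\in\mathcal S$ be the $T$‑periodic signal of the form \eqref{eq:persig} that spends time $\kappa_j T$ in mode $j$ on each period (its dwell time is $\min_j\kappa_j T>0$, so indeed $\sigma_T\in\mathcal S$, and it has period $T$). The monodromy matrix of the linearisation \eqref{eq:Falllinsw} driven by $\sigma_T$ over one period is
\[
M_T \;=\; e^{(-D_m+B_m)\kappa_m T}\cdots e^{(-D_1+B_1)\kappa_1 T}\;=\;I+TR+O(T^2)\;=\;e^{TR}+O(T^2),
\]
with the $O(T^2)$ bound uniform for $T\in[0,1]$. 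Since $R$ is Hurwitz, fix $\gamma\in(\mu(R),0)$ and a norm $\|\cdot\|_\ast$ on $\RR^n$ with $\|e^{tR}\|_\ast\le e^{\gamma t}$ for all $t\ge 0$ (e.g.\ a quadratic Lyapunov norm for $R-\gamma I$). A short estimate then gives $\|M_T\|_\ast\le 1+\gamma T+CT^2$ for $T\in[0,1]$, so there is $T^\ast>0$ with $\|M_T\|_\ast\le 1-\tfrac{|\gamma|}{2}T<1$ whenever $0<T<T^\ast$. Hence $\|M_T^k\|_\ast\to 0$ geometrically, and writing $t=kT+s$ with $0\le s<T$ and bounding the within‑one‑period transition matrices uniformly by some $C_1$, one obtains $\|\Phi_{\sigma_T}(t,0)\|_\ast\le C_1\,(1-\tfrac{|\gamma|}{2}T)^{\lfloor t/T\rfloor}$. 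Thus, for every such $T$, the linear switched system \eqref{eq:Falllinsw} with $\sigma=\sigma_T$ is globally asymptotically (indeed exponentially) stable at the origin.

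Finally I would transfer this to \eqref{eq:Fallsw}. Fixing any $T\in(0,T^\ast)$ and $\sigma=\sigma_T$, Lemma~\ref{lem:comparison} (applied with $y^0=x^0$) together with positivity yields $0\le x(t,x^0,\sigma)\le \Phi_{\sigma}(t,0)x^0$ for all $x^0\in\RR^n_+$, $t\ge 0$. The right‑hand side is bounded by $C_1\|x^0\|_\ast$ uniformly in $t$ and tends to $0$ as $t\to\infty$; by the squeeze, $x(\cdot,x^0,\sigma)$ is bounded, hence defined for all $t\ge 0$, satisfies $\|x(t,x^0,\sigma)\|_\ast\le C_1\|x^0\|_\ast$, and tends to $0$. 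Therefore the DFE of \eqref{eq:Fallsw} is stable and attracts all of $\RR^n_+$, i.e.\ it is GAS for the periodic signal $\sigma_T$, which proves the theorem.

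The only genuine work is the monodromy estimate $\rho(M_T)<1$ for small $T$: as $T\to 0$ the eigenvalues of $M_T$ coalesce to the (multiple) eigenvalue $1$ of $I$, so a bare continuity‑of‑eigenvalues argument fails, and one must instead control $\|M_T\|$ to first order in $T$ through the adapted norm of $R$ (equivalently, a quadratic Lyapunov function for $R$). Everything else — the construction of $\sigma_T$, its membership in $\mathcal S$, and the passage from the linearisation to the SIS model — is routine once Lemma~\ref{lem:comparison} is available. (An alternative in the spirit of Section~\ref{sec:Per} is to use the averaging Theorem~\ref{thm:Aver1} to compare $x(\cdot,x^0,\sigma_T)$ with the averaged SIS system $\dot x=(-\hat D+\hat B)x-\diag(x)\hat B x$, whose DFE is GAS by Corollary~\ref{c:fall-complete} because $-\hat D+\hat B=R$ is stable, and then to iterate via monotonicity; but the comparison‑with‑linearisation argument above is shorter and self‑contained.)
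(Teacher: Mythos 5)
Your proposal is correct, and it reaches the conclusion by a genuinely different route for the key step. Both you and the paper use the same convex decomposition $R=\sum_j\kappa_j(-D_j+B_j)$, the same fast periodic signal \eqref{eq:persig}, and the same final transfer from the linearisation \eqref{eq:Falllinsw} to the nonlinear model \eqref{eq:Fallsw} via Lemma~\ref{lem:comparison}. Where you differ is in proving that the linear switched system is stabilised by sufficiently fast periodic switching: you estimate the monodromy matrix directly, $M_T=e^{TR}+O(T^2)$, and show $\|M_T\|_\ast\le 1+\gamma T+CT^2<1$ in a norm adapted to the Hurwitz matrix $R$, which yields uniform exponential decay of $\Phi_{\sigma_T}(t,0)$; you correctly flag that a naive eigenvalue-continuity argument fails since the eigenvalues of $M_T$ cluster at $1$ as $T\to0$. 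The paper instead exploits the Metzler/monotone structure: it picks $v\gg0$ with $v_i>1$ and $Rv\ll0$, uses Lemma~\ref{lem:nonincreas} to see that the averaged linear flow $\dot x=Rx$ decreases from $v$, invokes the averaging result Theorem~\ref{thm:Aver1} to get $\tilde\psi(T,v)\le\alpha v$ with $\alpha<1$ for small $T$, and then iterates over periods using monotonicity and linearity before applying Lemma~\ref{lem:comparison}. Your argument buys independence from positivity and averaging at the linear stage (it is the standard fast-switching/Hurwitz-convex-combination estimate and gives an explicit exponential rate and a stability bound $\|x(t)\|_\ast\le C_1\|x^0\|_\ast$ for all $x^0\in\RR^n_+$); the paper's argument stays within the monotone-systems and averaging toolkit already developed in Section~\ref{sec:Per}, reusing Theorem~\ref{thm:Aver1} and Lemma~\ref{lem:nonincreas} at the cost of the auxiliary vector $v$ and the restriction-then-extension via $\|x^0\|_\infty\le1$. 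Your handling of the degenerate weights $\kappa_j=0$ (discarding them so the dwell time is positive) is a small point the paper leaves implicit, and your observation that the hypothesis $\mu(-D_j+B_j)>0$ is not actually used matches the paper, where it only describes the setting of Section~\ref{sec:Stabilisation}.
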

\begin{proof} Let $R = \dsp\sum_{j=1}^m \kappa_i (-D_j + B_j)$
satisfy $\mu(R) < 0$ where $\kappa_j\geq 0$ for all $j$ and
$\dsp\sum_{j=1}^m \kappa_j = 1$.
For $T > 0$, consider the periodic switching signal $\sigma \in \mathcal{S}$ given by (\ref{eq:persig}).
% as follows.
%\[\left\{
%\begin{array}{ll}
%\sigma(t) = 1& \mbox{ for } 0 \leq t < \alpha_1 T,\\
%\sigma(t) = i& \mbox{ for } 2 \leq i \leq m \mbox{ and }
%(\dsp\sum_{j=1}^{i-1} \alpha_j )T \leq t < (\dsp\sum_{j=1}^{i}
%\alpha_j )T,\\
%\sigma(t + T) = \sigma(t)& \mbox{ for }  t\geq 0.
%\end{array}
%\right.\]
We claim that it is possible to choose $T$ such that the DFE of (\ref{eq:Fallsw}) is GAS.
To show this, we consider the associated switched linear system
(\ref{eq:Falllinsw}); we denote the solution of (\ref{eq:Falllinsw}) by $\tilde \psi(t, x^0)$.  It is well known that the existence of a
Hurwitz convex combination of the system matrices $-D_j + B_j$ is
sufficient for the existence of a stabilizing switching law (usually state-dependent) for
such systems.  (See for example \cite{Lib03, Sho07}).  We include
the proof here in the interests of completeness.

The averaged system (\ref{eq:avera}) corresponding to (\ref{eq:Falllinsw}) is given by
\begin{equation}
\label{eq:avlin} \dot{x} = R x.
\end{equation}
As $\mu(R) < 0$, we can choose some $v \gg 0$ with $v_i > 1$ for all $i$ and $R v \ll 0$.  The solution $z(t, v)$ of (\ref{eq:avlin}) is decreasing by Lemma \ref{lem:nonincreas} and using Theorem~\ref{thm:Aver1} we can ensure that $\tilde \psi (T, v) \ll v$ by choosing $T$ sufficiently small.  Thus, for such a $T$, we can find some $\alpha$ with $0 < \alpha < 1$ such that
\begin{equation}
\label{eq:alpha}
\tilde \psi (T, v) \leq \alpha v.
\end{equation}
From the construction of $\sigma$ and the linearity of (\ref{eq:Falllinsw}), it follows that for $0 \leq t \leq T$,
\[\tilde\psi(T + t, v) = \tilde\psi(t, \tilde\psi(T, v)) \leq \alpha \tilde\psi(t, v).\]
Iterating the previous identity, we see readily that $\tilde\psi(t, v) \rightarrow 0$ as $t \rightarrow \infty$.  As $v_i \geq 1$ for all $i$, it immediately follows from the monotonicity of the system (\ref{eq:Falllinsw}) that $\tilde\psi(t, x^0) \rightarrow 0$ as $t \rightarrow \infty$ for any $x^0 \geq 0$ with $\|x^0\|_{\infty} \leq 1$.  The result now follows from a simple application of Lemma \ref{lem:comparison}.\hfill~\qed
\end{proof}

\begin{remark}[Epidemiological Interpretation]
\label{rem:BioInt4}
The epidemiological significance of the previous result relates to the development of control strategies for epidemic outbreaks based on switching policies.  In essence, if the conditions of Theorem \ref{thm:stabilis} are satisfied, then it is possible to asymptotically eradicate the disease by choosing a suitable switching signal.  This means that even when each constituent model has an endemic equilibrium, we can drive the system to the disease free state by varying the contact patterns between population subgroups in a suitable manner, for instance. 
\end{remark}

The stabilizing switching strategy described in the above result is
a time-dependent switching signal in the set $\mathcal{S}$.  It is
also possible to prove the existence of state-dependent switching
strategies following arguments similar to those in
\cite[Chapter~3]{Lib03}.

\section{Conclusion and Future Work}\label{sec:conclusions}
Building on the work of \cite{FIST07}, we have described several results
concerning compartmental SIS models with parameters subject to switching.
In particular, Theorem~\ref{t:stabimpliesasstab} generalizes a main
result of \cite{FIST07}, and shows that stability combined with an
appropriate notion of irreducibility for the linearisation is sufficient
for asymptotic stability of the disease free equilibrium of the switched
nonlinear system describing the epidemic dynamics.  As highlighted in the
text, this generalises the result in \cite{FIST07} even for the
time-invariant case.  Our work also indicates how endemic behaviour can
emerge for systems constructed by switching between models, each of which
has a globally asymptotically stable disease free equilibrium.
Specifically, we have described conditions for the disease to be
persistent and for the existence of periodic endemic orbits.  Results for
systems subject to Markovian switching have also been presented.

There are several natural questions arising from the work described here.
We briefly highlight some of these.  While we have
provided conditions for the existence of periodic orbits in
Theorem~\ref{thm:period}, it is natural to ask what stability properties
this orbit possesses.  Is it guaranteed to be locally or globally
attractive?  If not, then what additional requirements will render it so? 
Another question is whether or not the relaxed sufficient condition for
global asymptotic stability of the DFE given in Corollary
\ref{c:fall-complete} is also necessary.  Finally, it would be very
interesting to investigate whether the work of the current paper can be
extended to switched SIR epidemic models.  
\section*{Acknowledgement}
This work was supported by Science Foundation Ireland award
08/RFP/ENE1417 and by the Irish Higher Education Authority PRTLI 4
Network Mathematics grant.

The authors would like to thank Sebastian Pr{\"o}ll for useful discussions
and careful reading of the manuscript.


\begin{thebibliography}{10}

\bibitem{ABMW12}
{\sc M.~{Ait Rami}, V.~S. Bokharaie, O.~Mason, and F.~Wirth}, {\em Extremal
  norms for positive linear inclusions}, in Proc. 20th Int. Symposium on
  Mathematical Theory of Networks and Systems, MTNS 2012, Melbourne, Australia,
  2012.

\bibitem{Art07}
{\sc Z.~Artstein}, {\em Averaging of time-varying differential equations
  revisited}, Journal of Differential Equations, 243 (2007), pp.~146--167.

\bibitem{BacaKhal12}
{\sc N.~Baca{\"e}r and M.~Khaladi}, {\em On the basic reproduction number in a
  random environment}, J. Math. Biol., 67 (2012), pp.~1729--1739.

\bibitem{Bai57}
{\sc N.~T.~J. Bailey}, {\em The Mathematical Theory of Epidemics}, Griffin,
  London, 1957.

\bibitem{bauer1961absolute}
{\sc F.~Bauer, J.~Stoer, and C.~Witzgall}, {\em Absolute and monotonic norms},
  Numerische Mathematik, 3 (1961), pp.~257--264.

\bibitem{BP87}
{\sc A.~Berman and R.~J. Plemmons}, {\em Nonnegative Matrices in the
  Mathematical Sciences}, Classics in Applied Mathematics, SIAM, Philadelphia,
  PA, USA, 1987.

\bibitem{Bjo80}
{\sc T.~Bj{\"o}rk}, {\em Finite dimensional optimal filters for a class of
  {I}to-processes with jumping parameters}, Stochastics, 4 (1980),
  pp.~167--183.

\bibitem{BMW10}
{\sc V.~S. Bokharaie, O.~Mason, and F.~Wirth}, {\em Spread of epidemics in
  time-dependent networks}, Proc. 19th Int. Symposium on Mathematical Theory of
  Networks and Systems, MTNS 2010,  (2010).

\bibitem{Chue02}
{\sc I.~Chueshov}, {\em Monotone Random Systems}, Springer--Verlag, Berlin,
  2002.

\bibitem{ClarLedy98}
{\sc F.~H. Clarke, Y.~S. Ledyaev, R.~J. Stern, and P.~R. Wolenski}, {\em
  Nonsmooth Analysis and Control Theory}, vol.~178 of Graduate Texts in
  Mathematics, Springer-Verlag, New York, 1998.

\bibitem{CoddLevi55}
{\sc E.~A. Coddington and N.~Levinson}, {\em Theory of Ordinary Differential
  Equations}, Mc{G}raw-Hill, 1955.

\bibitem{CY73}
{\sc K.~L. Cooke and J.~A. Yorke}, {\em Some equations modelling growth
  processes and gonorrhea epidemics}, Mathematical Biosciences, 16 (1973),
  pp.~75--101.

\bibitem{Del00}
{\sc P.~{De Leenheer}}, {\em Stabiliteit, regeling en stabilisatie van
  positieve systemen}, PhD thesis, University of Gent, 2000.

\bibitem{DemyRubi95}
{\sc V.~Demyanov and A.~M. Rubinov}, {\em {C}onstructive {N}onsmooth
  {A}nalysis}, Verlag Peter Lang, Frankfurt Berlin, 1995.

\bibitem{Fac}
{\sc F.~Facchinei and J.~Pang}, {\em Finite-Dimensional Variational
  Inequalities and Complementarity Problems}, vol.~1, Springer-Verlag, Berlin,
  2003.

\bibitem{FainMarChig}
{\sc L.~Fainshil, M.~Margaliot, and P.~Chigansky}, {\em On the stability of
  positive linear switched systems under arbitrary switching laws}, IEEE
  Transactions on Automatic Control, 54 (2009), pp.~897--899.

\bibitem{FIST07}
{\sc A.~Fall, A.~Iggidr, G.~Sallet, and J.~Tewa}, {\em Epidemiological models
  and {L}yapunov functions}, Math. Model. Nat. Phenom., 2 (2007), pp.~62--68.

\bibitem{GrayGree12}
{\sc A.~Gray, D.~Greenhalgh, X.~Mao, and J.~Pan}, {\em The {SIS} epidemic model
  with {M}arkovian switching}, Journal of Mathematical Analysis and
  Applications, 394 (2012), pp.~496--516.

\bibitem{GurShoMas}
{\sc L.~Gurvits, R.~Shorten, and O.~Mason}, {\em On the stability of switched
  positive linear systems}, IEEE Transactions on Automatic Control, 52 (2007),
  pp.~1099--1103.

\bibitem{HY84}
{\sc H.~W. Hethcote and J.~A. York}, {\em Gonorrhea Transmission and Control},
  vol.~56 of Lectures Notes in Biomathematics, Springer-Verlag, New York, NY,
  1984.

\bibitem{HornJohn}
{\sc R.~A. Horn and C.~R. Johnson}, {\em Matrix Analysis}, Cambridge University
  Press, New York, NY, USA, 1985.

\bibitem{KaK:60}
{\sc I.~Kats and N.~Krasovskii}, {\em On the stability of systems with random
  parameters}, J. Appl. Math. Mec., 24 (1960), pp.~1225--1246.

\bibitem{Kozy90}
{\sc V.~S. Kozyakin}, {\em Algebraic unsolvability of problem of absolute
  stability of desynchronized systems}, Autom. Rem. Control, 51 (1990),
  pp.~754--759.

\bibitem{KrL:61}
{\sc N.~Krasovskii and E.~Lidskii}, {\em Analytical design of controllers in
  systems with random attributes}, Automation and Remote Control, 22 (1961),
  pp.~1021--1025.

\bibitem{LY76}
{\sc A.~Lajmanovic and J.~Yorke}, {\em A deterministic model for gonorrhea in
  nonhomogeneous population}, Math. Biosci., 28 (1976), pp.~221--236.

\bibitem{Lib03}
{\sc D.~Liberzon}, {\em Switching in Systems and Control}, Birkh{\"a}user,
  Boston, MA, USA, 2003.

\bibitem{PerPatchy2011}
{\sc X.~Liu and X.-Q. Zhao}, {\em A periodic epidemic model with age-structure
  in a patchy environment}, SIAM Journal of Applied Mathematics, 71 (2011),
  pp.~1896--1917.

\bibitem{Lloyd}
{\sc N.~Lloyd}, {\em Degree Theory}, Cambridge University Press, 1978.

\bibitem{Mar:90}
{\sc M.~Mariton}, {\em {Jump Linear Systems in Automatic Control}}, Marcel
  Dekker, New York, NY, 1990.

\bibitem{New03}
{\sc M.~E.~J. Newman}, {\em The structure and function of complex networks},
  SIAM review,  (2003), pp.~167--256.

\bibitem{Norris}
{\sc J.~Norris}, {\em Markov Chains}, Cambridge University Press, Cambridge,
  2008.

\bibitem{Proell13}
{\sc S.~Pr{\"o}ll}, {\em Stability of switched epidemiological models},
  Master's thesis, Institute for Mathematics, University of W{\"u}rzburg,
  W{\"u}rzburg, Germany, 2013.

\bibitem{PerSeasonal2012}
{\sc C.~Rebelo, A.~Margheri, and N.~Baca\"{e}r}, {\em Persistence in seasonally
  forced epidemiological models}, Journal of Mathematical Biology, 64 (2012),
  pp.~933--949.

\bibitem{Sho07}
{\sc R.~Shorten, F.~Wirth, O.~Mason, K.~Wulff, and C.~King}, {\em Stability
  theory for switched and hybrid systems}, SIAM Review, 49 (2007),
  pp.~545--592.

\bibitem{Smi95}
{\sc H.~A. Smith}, {\em Monotone Dynamical Systems. An Introduction to the
  Theory of Competitive and Cooperative Systems}, American Mathematical
  Society, Providence, RI, USA, 1995.

\bibitem{SmiThi}
{\sc H.~L. Smith and H.~R. Thieme}, {\em Dynamical Systems and Population
  Persistence}, American Mathematical Society, Providence, RI, USA, 2011.

\bibitem{DW02}
{\sc P.~van~den Driessche and J.~Watmough}, {\em Reproduction numbers and
  subthreshold endemic equilibria for compartmental models of disease
  transmission}, Math. Biosci.,  (2002), pp.~29--48.

\bibitem{Wirt02}
{\sc F.~Wirth}, {\em The generalized spectral radius and extremal norms}, Lin.
  Alg. Appl., 342 (2002), pp.~17--40.

\bibitem{WHO08}
{\sc {World~Health~Organization~(WHO)}}, {\em The global burden of disease:
  2004 update}.
\newblock
  \url{http://www.who.int/healthinfo/global_burden_disease/2004_report_update/%
en/index.html}, 2008.
\newblock Last Retrieved: 05 December 2011.

\end{thebibliography}
\end{document}